\newcommand{\N}{\mathbb{N}}
\newcommand{\Q}{\mathbb{Q}}
\newcommand{\GAUT}{\overline G^{\mathfrak S}} 
\newcommand{\GEMB}{\overline G} 
\DeclareMathOperator{\Id}{Id}
\DeclareMathOperator{\sib}{sib}
\DeclareMathOperator{\age}{age}
\DeclareMathOperator{\Min}{Min}
\DeclareMathOperator{\Max}{Max}
\def\Power #1 { \powerset(#1) }
\newtheorem{definition}{{\bf Definition}}[section]
\newtheorem{theorem}[definition]{{\bf Theorem}}
\newtheorem{corollary}[definition]{{\bf Corollary}}
\newtheorem{case}[definition]{\noindent {\bf Case}}
\newtheorem{proposition}[definition]{\noindent {\bf Proposition}}
\newtheorem{lemma}[definition]{\noindent {\bf Lemma}}
\newtheorem{claim}[definition]{\noindent {\bf Claim}}
\newtheorem{subclaim}[definition]{\noindent {\bf Subclaim}}
\newtheorem{conjecture}[definition]{\noindent {\bf Conjecture}}
\newtheorem{example}[definition]{\noindent {\bf Example}}
\newtheorem{remark}[definition]{\noindent {\bf Remark}}
\newtheorem{problem}[definition]{\noindent {\bf Problem}}
\newtheorem{problems}[definition]{\noindent {\bf Problems}}
\def\proofref #1 {{\noindent  {\bf Proof} (#1).}\ }
\def\endproof{\hfill {\kern 6pt\penalty 500
\raise -0pt\hbox{\vrule \vbox to5pt {\hrule width 5pt
\vfill\hrule}\vrule}}}
\def\centerpicture #1 by #2 (#3){\leavevmode
        \vbox to #2{
        \hrule width #1 height 0pt depth 0pt
        \vfill
        \special{pictfile #3}}}
\title[Siblings]{Siblings of an $\aleph_0$-categorical relational structure. }
\author[C.Laflamme]{Claude Laflamme} 
\address{ Mathematics \& Statistics Department, University of Calgary, Calgary, Alberta, Canada T2N 1N4}
\email{laflamme@ucalgary.ca} 
\author[M.Pouzet]{Maurice Pouzet} \address{ICJ, Math\'ematiques, Universit\'e Claude-Bernard Lyon1, 43 bd. 11 Novembre 1918, 69622 Villeurbanne Cedex, France and Mathematics \& Statistics Department, University of Calgary, Calgary, Alberta, Canada T2N 1N4}
 \email{pouzet@univ-lyon1.fr }
\author[N.Sauer]{Norbert Sauer} \address{Mathematics \& Statistics Department, University of Calgary, Calgary, Alberta, Canada T2N 1N4}
\email{nsauer@ucalgary.ca }  
\author[R.Woodrow]{Robert Woodrow} \address{Mathematics \& Statistics Department, University of Calgary, Calgary, Alberta, Canada T2N 1N4}
\email{woodrow@ucalgary.ca }  
\thanks{This research started while the second author visited the Mathematics and Statistics Department of the University of 
Calgary in June 2012; the support provided is gratefully acknowledged.  The first, third and fourth authors warmly thank the Logic group and their staff at the \emph{Institut Camille Jordan} of Universit\'e Lyon I for their wonderful hospitality during various visits during the preparation of this work. \\
 This work was supported financially by NSERC of Canada Team Grant \#~10007490, and also  by the LABEX MILYON (ANR-10-LABX-0070) of Universit\'e de Lyon within the program ``Investissements d'Avenir (ANR-11-IDEX-0007)" operated by the French National Research Agency (ANR)}
\date{\today}
\begin{document}

\keywords{graphs, trees, relational structures, homogeneity, ultra-homogeneity, equimorphy, isomorphy, oligomorphic groups}
\subjclass[2000]{Partially ordered sets and lattices (06A, 06B)}

\begin{abstract}  
A \emph{sibling} of a relational structure $R$ is any structure $S$ which can be embedded into $R$ and, vice versa,  in which $R$ can be embedded. Let $\sib(R)$ be the number of siblings of $R$, these siblings being  counted up to isomorphism.  Thomass\'e conjectured that for countable relational structures made of at most countably many relations, $\sib(R)$ is either $1$, countably infinite, or the size of the continuum; but even showing the special case $\sib(R)=1$ or infinite is unsettled when $R$ is a countable tree.

We prove that if $R$ is countable and $\aleph_{0}$-categorical, then indeed $\sib(R)$ is one or infinite. Furthermore, $\sib(R)$ is one if and only if $R$ is finitely partitionable in the sense of Hodkinson and Macpherson \cite{hodkinson-macpherson}.  The key  tools in our proof are the notion of monomorphic decomposition of a relational structure introduced in \cite{pouzet-thiery} and studied further in \cite {oudrar-pouzet},  \cite{oudrar} and a result of Frasnay \cite{ frasnay 84}.\end{abstract} 
\maketitle


Dedicated to  Roland Fra\"{\i}ss\'e and Claude Frasnay. In memoriam.

\section{Introduction} 

A \emph{sibling} of a given relational structure $R$ is any structure $S$ which can be embedded into $R$, and vice versa, in which $R$ can be embedded. 
If $R$ is finite, there is just one sibling but generally one cannot expect equimorphic structures to be necessarily  isomorphic. However,  the famous Cantor-Bernstein-Schroeder Theorem states that this is the case for structures in a language with pure equality: if there is an injection from one set to another and vice-versa, then there is a bijection between these two sets. The same situation occurs in other structures such as vectors spaces, where embeddings are linear injective maps. But, as expected, it is not in general the case that equimorphic structures are isomorphic. 

Thus, let $\sib(R)$ be the number of siblings of $R$, these siblings being  counted up to isomorphism. Thomass\'e conjectured that $sib(R)=1$, $\aleph_0$ or $2^{\aleph_0}$ for countable relational structures made of at most countably many relations  (see Conjecture 2 in  \cite{thomasse}).  We verified this conjecture for chains in \cite{LPW}. The special case,  $\sib(R)=1$ or infinite, is unsettled, even  in the case of trees. It is connected to the Bonato-Tardif conjecture  which asserts that  for every tree $T$ the number of trees which are siblings of $T$ is either one or infinite, see \cite{bonato-tardif, bonato-al, tyomkyn}. The connection is through the following observation.  Every sibling of a tree $T$ is a tree if and and only if $T\oplus 1 $, the graph obtained by adding to $T$  an isolated vertex, is not a sibling of $T$ (more generally, note that every sibling of a connected graph is connected, just in case $G\oplus 1$ is not a sibling). Hence, for a tree $T$ not equimorphic to $T\oplus 1$, the Bonato-Tardif conjecture and the special case of Thomass\'e's conjecture are equivalent. It turns out that for these trees,  these conjectures are open (for an example, it is open for ternary trees decorated with pendant vertices). On the other hand,   if a tree $T$ is equimorphic to $T\oplus 1$, the number of siblings of $T$ is infinite, hence the special case of the Thomass\'e conjecture holds, but we do not know if the Bonato-Tardif conjecture holds).  \medskip

In this paper we prove the following: 

\begin{theorem}\label{thm:main} The  number of siblings of a countable  $\aleph_0$-categorical  relational structure $R$ is either one or infinite. Furthermore,  it is  one if and only if $R$ is finitely partitionable, that is there is a partition of the domain $E$ of $R$ into finitely many sets such that every permutation of $E$ which preserves each block of the partition is an automorphism of $R$.
\end{theorem}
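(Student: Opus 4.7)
The plan is to prove each implication separately.

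\emph{Easy implication.} Suppose $R$ is finitely partitionable via $E=E_{1}\sqcup\cdots\sqcup E_{k}$. First, this property is inherited by every substructure of $R$ (a block-preserving permutation of a substructure extends to all of $E$ by the identity, giving an automorphism of $R$), hence by every sibling $S$ of $R$ through the embedding $S\hookrightarrow R$. Second, because every block-preserving permutation is an automorphism, the isomorphism type of a finitely partitionable structure is completely determined by the cardinalities $(|E_{i}|)_{i\leq k}$ together with the relational type read on any system of block representatives. Given embeddings $f\colon S\hookrightarrow R$ and $g\colon R\hookrightarrow S$, matching blocks of the same local type and applying a Cantor--Bernstein--Schr\"oder argument block-by-block yields $|E_{i}|=|S_{i}|$ in each isomorphism class of blocks, hence $S\cong R$.

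\emph{Hard implication.} Assume now that $R$ is countable, $\aleph_{0}$-categorical, and not finitely partitionable. I would examine the minimal monomorphic decomposition of $R$ introduced in \cite{pouzet-thiery} and analysed in the $\aleph_{0}$-categorical setting in \cite{oudrar-pouzet,oudrar}, splitting into two cases according to whether this decomposition is finite. If the decomposition has infinitely many blocks, the oligomorphy of $\Aut(R)$ forces infinitely many of them to share the same local isomorphism type; removing $n$ such blocks, for each $n\in\N$, produces substructures that still admit an embedding of $R$ (since infinitely many blocks of that type remain) and that are pairwise non-isomorphic, being distinguished by a cardinal invariant read directly from the decomposition.

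If instead the decomposition is finite, say $E=E_{1}\sqcup\cdots\sqcup E_{k}$, then each restriction $R|_{E_{i}}$ is itself monomorphic. Since $R$ is not finitely partitionable, some block must carry a non-trivial monomorphic structure on which the full symmetric group does not act as automorphisms. Here I would appeal to Frasnay's theorem \cite{frasnay 84}, which classifies $\aleph_{0}$-categorical monomorphic structures as essentially chainable, i.e.\ equipped with a natural (dense) linear ordering whose order preserving bijections account for the automorphisms. Such a block admits proper self-embeddings with non-trivial cokernel, so finite initial or final segments can be added to or removed from it without leaving the age of $R$; pasting these modifications into $R$ on a single block produces a countable family of pairwise non-isomorphic siblings, in the spirit of the siblings $\mathbb{Q}+n$ of $\mathbb{Q}$. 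The main obstacle lies precisely in this last case: one must verify that a local perturbation on one block does not accidentally become isomorphic to $R$, or to another such perturbation, under some non-local automorphism, and this is where Frasnay's refined structural description is indispensable, since it supplies the explicit cardinal or combinatorial invariants that distinguish the resulting siblings.
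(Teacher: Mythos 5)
Your overall architecture (monomorphic decomposition, then a finite/infinite case split, with chainability handling the monomorphic blocks) matches the paper's, but both cases contain genuine gaps, and the most serious one is in the infinite case. There, "removing $n$ blocks of a common local type" fails on both counts. First, the resulting substructures need not be pairwise non-isomorphic: for the Rado graph every monomorphic component is a singleton and all singletons have the same type, yet deleting finitely many vertices returns the Rado graph, so your recipe produces only copies of $R$; the number of blocks of each type is $\aleph_0$ before and after removal, so no "cardinal invariant read from the decomposition" can distinguish them. Second, the claim that $R$ still embeds after removal is unjustified: having infinitely many blocks of the surviving type says nothing about the cross-block relations an embedding must preserve. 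The paper goes in the opposite direction: after first replacing $R$ by an equimorphic uniformly prehomogeneous sibling (Saracino's theorem, Theorem \ref{thm:unifprehomtest}) --- a reduction you omit but which is needed so that extensions with the same age embed back into $R$ --- it uses Fra\"{\i}ss\'e's chainability theorem and compactness to \emph{add} an infinite monomorphic part forming a component whose trace on $E$ is finite (Lemma \ref{lem:adding a block}, Lemma \ref{good extension}); since oligomorphy bounds the sizes of components, the restrictions $R'_{\restriction E\cup H}$ acquire a new component of a size not previously realized and are pairwise non-isomorphic siblings (Proposition \ref{cor:manyextensions}). This is exactly what rescues the Rado graph case.

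In the finite-decomposition case your dichotomy is slightly off and the hard step is deferred rather than done. The relevant condition is not whether $\Aut(R_{\restriction E_i})$ is the full symmetric group but whether the infinite component is \emph{strongly indiscernible in $R$}, i.e.\ whether its permutations extended by the identity on the complement are automorphisms; cross-relations with other blocks matter. When some infinite component $A$ fails this, the paper does not perturb $A$ by finite segments (whose mutual non-isomorphism inside the ambient $R$ is precisely the unproved point you flag); instead Theorem \ref{prop:keyfinite} takes $2^{\aleph_0}$ subsets $A_\alpha$ with pairwise non-embeddable restrictions, uses Frasnay's bichain theorem via Lemma \ref{lem:scatteredbichain} to show a self-embedding of $A$ onto $A_\alpha$ cannot have scattered image --- whence each $R_{\restriction B\cup A_\alpha}$ is a sibling --- and a counting argument to bound each isomorphism class. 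Finally, your "easy" direction also needs more care than a block-by-block Cantor--Bernstein, since an embedding between siblings need not map blocks to blocks; the clean route is Hodkinson--Macpherson's theorem that a finitely partitionable structure is determined by its age.
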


Our  result extends  a result of Hodkinson and Macpherson \cite{hodkinson-macpherson}. Indeed, they  proved that  a countable structure $R$ in a finite  language is such that   every  $R'$ with the same age is isomorphic  to  $R$ (in which case   every $R'$ with the same age is equimorphic to $R$), if and only if  $R$ is finitely partitionable. They  indicate that their result   holds if the language is infinite and, in addition $Aut(R)$, the automorphism of $R$, is oligomorphic,  that is, for each integer $n$,  the number of orbits of $n$-element subsets of the base set is finite.

The fact that a countable relational structure $R$ is $\aleph_0$-categorical is equivalent to the fact that  $Aut (R)$  is oligomorphic (Engeler, Ryll-Nardzewski and Svenonius, see for example Cameron \cite{cameron.1990} p.30). In this context,  our result applies to countable homogeneous structures with an oligomorphic automorphism group.  Indeed, let $G$ be a group acting on a set $E$. We recall that a  partial map $f$ with domain $A$ and codomain $A'$, subsets of $E$, is \emph{adherent} to $G$ w.r.t. the pointwise convergence topology if  for every finite subset $F$ of $A$ there is some $g\in G$ such that $f$ and $g$ coincide on $F$. In our setting, we will instead say that such a map is a $G$-\emph{local embedding}; if $A=E$ then we say that this is a $G$-\emph{embedding},  and if furthermore $A'=E$ we say that this is a $G$-\emph{automorphism}. We write $\GEMB$ for the set of $G$-embeddings, and we write $\GAUT$ for the set of $G$-automorphisms which is easily seen to form a group.  If $G= \GAUT$, we say that $G$ is \emph{closed} (this is the case  if $G= Aut(R)$ for some relational   structure $R$).  We say that two subsets of $E$ are \emph{equivalent}, resp. \emph{weakly-equivalent},  if each is the image of the other by some $G$-local embedding, resp. each one \emph{contains} the image of the other by some $G$-local embedding. A \emph{$G$-copy} is the image of $E$ under some $G$-embedding, that  is,  a member of the equivalence class of $E$. A \emph{$G$-sibling}  is a subset of $E$ which contains a $G$-copy; equivalently, this is a subset weakly equivalent to $E$.  We denote by $\sib(G)$ the number of equivalence classes of $G$-siblings, under isomorphism.

\medskip

In this setting, Theorem \ref{thm:main} yields the following.

\begin{theorem}
If $G$ is a closed oligomorphic group  on a countable set $E$, then $\sib(G)$  is one or infinite. That is  either the weak-equivalence classes  of $E$ coincide with the equivalence classes of $E$ (that is the set of copies), or  each  is the union of  infinitely many equivalence classes. In the first case there is a partition of  $E$ into finitely many sets such that every permutation of $E$ which preserves each block of the partition belongs to  $G$.   
\end{theorem}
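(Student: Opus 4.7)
The plan is to deduce this theorem from Theorem~\ref{thm:main} by using the standard duality between closed oligomorphic permutation groups on a countable set and countable $\aleph_0$-categorical relational structures, already invoked in the paragraph preceding the statement.

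First, I would build the canonical relational structure $R$ on $E$ associated with $G$: for each arity $n\geq 1$ and each orbit $\mathcal{O}$ of the diagonal action of $G$ on $E^{n}$, introduce an $n$-ary predicate interpreted as $\mathcal{O}$. Since $G$ is oligomorphic there are finitely many orbits per arity, so $R$ is a structure in a countable language; since $G$ is closed, one has $\Aut(R)=G$; and by Engeler--Ryll-Nardzewski--Svenonius the structure $R$ is $\aleph_0$-categorical.

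Next I would verify that every $G$-notion in the statement coincides with the corresponding structural notion on $R$. Because the basic relations of $R$ are precisely the $G$-orbits, a map $f:A\to A'$ between subsets of $E$ preserves all finite traces of those relations if and only if every finite restriction of $f$ extends to an element of $G$; that is, $G$-local embeddings are exactly the local isomorphisms between induced substructures of $R$. Consequently, $G$-embeddings are self-embeddings of $R$, $G$-copies of $E$ are isomorphic copies of $R$ inside $R$, $G$-siblings of $E$ are substructures of $R$ equimorphic to $R$, and the equivalence relation on $G$-siblings coincides with isomorphism of induced substructures of $R$. Since $R$ is countable, every sibling of $R$ embeds into $R$, and hence $\sib(G)=\sib(R)$.

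Applying Theorem~\ref{thm:main} to $R$ gives $\sib(R)\in\{1,\infty\}$, and $\sib(R)=1$ if and only if $R$ is finitely partitionable. Unpacking the latter using $\Aut(R)=G$ yields exactly the final assertion of the theorem: a finite partition of $E$ every block-preserving permutation of which lies in $G$. The argument is thus essentially a dictionary, and there is no substantive obstacle; the one step that must be checked carefully is the identification of $G$-local embeddings with partial isomorphisms of $R$, on which the entire translation rests.
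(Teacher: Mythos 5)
Your proposal is correct and follows essentially the same route as the paper: both pass to the canonical homogeneous structure $R$ with $\Aut(R)=G$ (the paper cites Cameron for its existence, you construct it explicitly from the orbits on tuples), observe that $G$-local embeddings coincide with local isomorphisms of $R$ so that $\sib(G)=\sib(R)$, and then invoke Theorem~\ref{thm:main}. The key identification you flag — that preserving the orbit relations on a finite set is equivalent to extending to an element of $G$ — is exactly the homogeneity of the canonical structure, and it holds as you state.
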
 

\begin{proof} Since $G$ is closed, there exist some  homogeneous relational structure $R$ such that $Aut(R)= G$ (see for example Cameron \cite{cameron.1990} p.26). Since $G$ and hence $Aut(R)$ is oligomorphic, $R$ is $\aleph_0$-categorical.  This $R$ is such that a partial map is a local embedding of $R$ iff it  is a $G$-local embedding. Hence, the number of equivalence classes of $G$-siblings is exactly the number of siblings of $R$.  
\end{proof}

\medskip

The number of siblings of a countable  $\aleph_0$-categorical structure can be 1 or $\aleph_0$, but our proof does not show if $2^{\aleph_0}$ is the only other possibility.

\subsection{Ideas behind the proof. An  outline}

A natural idea in the study of siblings of a structure $R$ is to study extensions of $R$ with the same age. When $R$ is universal for its age, these extensions are automatically siblings. 

To illustrate, let us consider countable homogeneous graphs. Thanks to the classification result of Lachlan-Woodrow \cite{lachlan-woodrow} we have a precise description. Each such graph is (up to complement) the Random graph (where the  age is all finite graphs); the generic structure whose age is all $K_n$-free graphs ($n\geq 3$);  $mK_n$  (where $m+n$ is infinite, $m,n\geq 1$). 
Using the idea of non-isomorphic extensions, we can easily produce $2^{\aleph_0}$  siblings for $G$, the Random graph,  or $G$, the homogeneous $K_n$-free graph. Indeed, let $\{G_n: n\in \N\}$ be an antichain (for graph embedding) of finite connected graphs without triangles (e.g., take for $G_n$ an $n+4$-element cycle). For $S\subseteq \N$, form $G_S:= G\cup \sum_{n\in S}G_n$, the disjoint union of $G$ and some of the $G_n$. Since $G$ is connected, these graphs are not isomorphic; since $G$ is universal for its age, they are equimorphic to $G$. Hence $\sib(G)=2^{\aleph_0}$.
When $G= mK_n$, three cases need to be considered. Case 1. $m,n$ are infinite, For $S\subseteq \N$, form $G_S= G\cup \sum_{n\in S}K_n$. Clearly, $G_S$ embeds in $G$ and this produces $2^{\aleph_0}$ siblings. Case 2. $m$ finite. In this case, $\sib(G)= 1$. Case 3. $m$ is infinite;  we may suppose $n\geq 2$. In this  case, by extending $G$ to isolated vertices, $\sib(G)= \aleph_0$. It is not difficult to use the same idea to show that for the countable ultra-homogeneous tournaments one has the same trichotomy. It is tempting to try to generalize the results to relational structures $R$ that are universal for their own age. But this goes beyond techniques we have. By restricting the classes to $\aleph_0$-categorical structures, and by using the idea of monomorphic decomposition, one can get some general results showing $\sib(R)$ is $1$ or infinite. 
We sketch the outline of the proof.

We start with a countable structure $R$ which is $\aleph_0$-categorical in its complete theory. As is well known,  there is a   countable structure $R'$ equimorphic to $R$ which is $\aleph_0$-categorical, but for which the complete theory is axiomatizable by universal-existential sentences (see Saracino \cite{saracino}, see also  Pouzet \cite{pouzet}  last theorem of page 697). Since $R'$ is equimorphic to $R$, then $\sib(R')= \sib(R)$, and hence  we may replace $R$ by $R'$.

Structures $R$ for which the complete theory is axiomatizable by universal-existential sentences have a combinatorial definition that we recall in Section \ref{section: basic} (Theorem \ref{thm:unifprehomtest}).  They are  uniformly prehomogeneous and  their profile  (the function which counts for each integer $n$ the number of restrictions  to the $n$-elements subsets,  these restrictions being counted up to isomorphy) take only finite values. 

%
Starting with such a structure $R$,  we consider its  monomorphic decomposition. This notion   appears in full generality in  \cite{pouzet-thiery}, \cite {oudrar-pouzet}, and in \cite{oudrar}). In our case it is given by an equivalence relation that is definable by a universal sentence.  

We study first a special case,when  the decomposition consists of one class, that is, in the terminology of Fra\"{\i}ss\'e,  $R$ is monomorphic. In this case, we   prove that $\sib(R)$ is one, in which case $Aut(R)$ is the full symmetric group,  or $2^{\aleph_0}$ (Theorem \ref 
{thm:sibrational}). To do this we use both Frasnay's result on chainable structures and  Cameron's result  on monomorphic groups. More generally, we show that if  $R$ has an infinite class which is not a strongly  indiscernible subset of $R$ (that is some permutation of that class does not extend to an automorphism of $R$ by the identity on the remainder) then $R$ has $2^{\aleph_0}$ siblings  (Theorem \ref{prop:keyfinite}). From this, it follows that if $R$ has a finite monomorphic decomposition then $R$ has one or $2^{\aleph_0}$ siblings (Theorem \ref{thm:monomorphic-siblings}). Next, we consider the case where  
$R$ has no finite monomorphic decomposition. Here,  we prove that $\sib(R)$ is infinite ($(a)$ of Theorem \ref{thm infinitely many}). Indeed, since $R$  is universal for its age, every countable extension with the same age will be equimorphic to $R$. With Ramsey's theorem and the compactness theorem of first order logic, we can build an extension $R'$ of $R$  whose domain $E'$ is an extension of the domain $E$ of $R$, and where $E'\setminus E$ is an infinite  monomorphic part of $R'$. 
Then for $H$  a finite  subset of $E'\setminus E$ and $R'_H=R'{\restriction E\cup H}$, we will obtain that $R'_H$ is equimorphic to $R$. Our aim is to get $R'$ such that for infinitely many integers $k$, the various $R'_H$'s with $\vert H\vert=k$ are pairwise non isomorphic, hence  $\sib(R)$ will be infinite. Using  the fact that $R$ has infinitely many components, we get $R'$ such that the trace over $E$ of the component of $R'$ containing $E'\setminus E$ is finite. This will suffices to realize our aim. Finally, using again the compactness theorem of first order  logic, we prove that, if $R$ has infinitely many infinite components, then $\sib(R)= 2^{\aleph_0}$ siblings ($(b)$ of Theorem \ref{thm infinitely many}). 

The value of $\sib(R)$ remains unsettled if $R$ has infinitely many finite monomorphic classes and all infinite classes are strongly indiscernible. If $R$ is the Random graph or an infinite direct sum of copies of the complete graph $K_m$ ($m\in \N$) all classes are finite (the classes of the Random graph are singletons, while the class of the direct sum of copies of $K_m$ are these copies). But, the number of siblings of the Random graph is the continuum, while  the number of siblings of this direct sum is countable. We conjecture that $\sib(R)$ is at most countable if and only if $R$ is cellular. (see Problem \label{prob:improvement}\ref{section:thelast} in Section \ref{section:thelast}).  

%
\subsection{Structure of the paper} Basic definitions are introduced in Section 2.  
Five sections focus on the proof of the main theorem. In Section 3 we present the notion of monomorphy and prove  that  if  a countable relational  structure  $R$ is monomorphic, uniformly prehomogeneous and  if $Aut(R)$ is not the symmetric group then $\sib(R)=2^{\aleph_0}$. (Theorem \ref{thm:sibrational}). We introduce in Section \ref{monomorphic decomposition} the notion of  monomorphic decomposition of a relational structure.  In Section \ref{Finite monomorphic decomposition} we prove that if a countable relational structure is uniformly prehomogeneous and has  a finite monomorphic decomposition then it has $1$ or $2^{\aleph_0}$ siblings(Theorem \ref{thm:monomorphic-siblings}). In Section \ref{section:nofinitemonomorphicdecomposition} we consider the case of structures without finite monomorphic decomposition. We reassemble our results  in Theorem \ref{thm:main2} of  Section \ref{section:maintheorem}. Theorem \ref{thm:main} follows. 
 
 In Section \ref{section:thelast}, the last section, we present several problems around the notion of equimorphy.

\section*{acknowledgement} We thank the organizers of the Banff International Research Station workshop on Homogeneous Structures (15w5100), held November 8-13 2015, where a preliminary version  of this paper was presented.

We are pleased to thank the referee  of this paper for thoughtful suggestions  and  corrections.

\section{Basic definitions}\label{section: basic}
 
Our terminology follows that of Fra\"{\i}ss\'{e} \cite{fraisse}. A \textit{relational structure} of \emph{signature} $\mu=(n_i)_{i\in I}$ and \emph{domain} $E$ is a pair $R= (E,(\rho_i)_{i\in I})$ where each $\rho_i$ is an $n_i$-ary relation on $E$.  If $I'$ is a subset of $I$, then $R'= (E,(\rho_i)_{i\in I'})$ is  called a \emph{reduct} of $R$, and called a  \emph{finite reduct} if $I'$ is finite. A relational structure $R= (E,(\rho_i)_{i\in I})$ is a  \emph{binary relational structure}, \emph{binary structure} for short, if it is made only of binary relations. It is \emph{ordered} if one of the relations $\rho_i$ is a linear order. 

\subsection{Embeddability, age, profile}
The \emph{substructure induced by $R$ on a  subset $A$} of $E$, simply called the \emph{restriction of $R$ to $A$}, is the relational structure $R_{\restriction A}= (A,
(A^{n_i}\cap \rho_i)_{i\in I})$. For simplicity the restriction to $E\setminus \{x\}$ is denoted $R_{-x}$.  The notion  of \emph{isomorphism} between relational structures is defined in the natural way. A map $f$ from a subset $F$ of the domain $E$ onto a subset $F'$ of a relational structure $R'$ is a \emph{local isomorphism of $R$ into $R'$} if $f$ is an isomorphism of $R_{\restriction F}$ onto $R'_{\restriction F'}$. If $R=R'$, we say that $f$ is  a \emph{local isomorphism of $R$} (or a local embedding of $R$). A relational structure $R$ is \emph{embeddable} into a relational structure $R'$ if $R$ is isomorphic to some restriction of $R'$. Embeddability   is a quasi-order on the class of structures having a given signature.

The \emph{age} of a relational  structure $R$ is the set $age(R)$ of restrictions of $R$ to finite subsets of its domain, these restrictions being considered up to isomorphy. The \emph{profile} of a relational structure $R$ is the function $\varphi_R$ which gives for every non-negative integer $n$, the number of $n$-element restrictions counted up-to isomorphy. This function depends only on the age of $R$.

\subsection{Homogeneity}

A relational structure $R$ is  \emph{homogeneous} if  every finite local isomorphism extends to an automorphism of the structure (the notion has been introduced independently by several authors, the current terminology comes from Fra\"{\i}ss\'e; the reader must be aware that it is called ultra-homogeneous in some of the early literature).  We present below three generalizations of this notion. We focus on the  notion of \emph{uniform prehomogeneity} which we characterize in term of the notion of local $1$-embedding. 

Let $R$ and $R'$ be two relational structures on $E$ and $E'$ respectively; we say that a map $f$ defined on a subset $F$  of $E$ with values in a subset $F'$ of $E'$ is a \emph{local $1$-embedding of $R$ into $R'$} if its restriction to every finite subset $H$ of $F$  extends  to every finite set $\overline {H}\subseteq E$ containing $H$ to a local  isomorphism of $R$ into $R'$. If $f^{-1}$, the set inverse of $f$, is also a local $1$-embedding, we say that $f$ is a \emph{local $1$-isomorphism}; if such $f$ exists, we say that  $F$ and $F'$  are  \emph{$1$-isomorphic} or have the same \emph{$1$-isomorphism type}.

Let $R$ be a relational structure with  base  $E$. An \emph{extension} of $R$ is any  relational structure $R'$  such that $R'_{\restriction E}= R$. An extension $R'$  is a  $1$-\emph{extension  of  $R$} if  for every finite subset  $F$ of $E$, the identity map $\Id_{\restriction F}$ on $F$ is a $1$-local embedding \emph{from $R'$ to $R$}. This means that for 
every finite subset  $F'$ of  $E'\setminus E$ there is a local isomorphism of $R'$ to $R$ which is the identity on $F$  and maps  $F'$ into $E$. Then, we say that a  relational structure $R$ is \emph{existentially closed} if every  extension of $R$ with the same age is a $1$-extension. We say that $R$ is \emph{existentially universal} if for every extension $R'$ with the same age,  every finite $F$ in the domain of $R$, every finite $F'$ in the domain of $R'$, the identity map on $F$ extends to $F'$ to a local $1$-embedding of $R'$ to $R$ (its role is discussed in the last Section).

We say that  $R$ is \emph{prehomogeneous} if, for every finite set $F$ of the domain $E$ of $R$, there is a finite superset $F'$ of $F$ such that every local isomorphism of $R$ with domain $F$ extends to an automorphism of $R$ provided that it extends to $F'$. We say that $R$ is  \emph{uniformly prehomogeneous} if in addition the  cardinality of $F'$ is bounded by some function $\theta$ of the cardinality of $F$.

Slightly different notions of existentially closed  and existentially universal structures were introduced by Robinson in syntactical terms by means of existential sentences and existential types \cite{robinson}. Notions of prehomogeneity and uniform prehomogeneity were introduced by Pabion \cite{pabion} for multirelations (relational structures with finitely many relations); a syntactical definition is in \cite{pouzet}. 
If the profile of $R$ takes only integer values (particularly if the signature is finite), our definitions given here are  equivalent to the syntactical definitions. In this case, $(a)$ every structure extends to an existentially closed structure with the same age; $(b)$ $R$ is existentially closed iff every  local $1$-embedding of $R$ with finite domain in an extension with the same age is a local $1$-isomorphism.

A characterization of  prehomogeneity was given by Pabion (Proposition 1, p. 530, \cite{pabion}) for multirelations. It is given in terms of complete types. With our condition below,  his proof extends to structures with infinitely many relations. For more about prehomogeneity, see \cite{pouzet2, simmons, pouzet-roux}. 

\begin{theorem}\label{thm:prehomogeneity}
A relational structure $R$ on a countable  set $E$  is prehomogeneous if and only if  for each finite subset $F$ of $E$ there exist $\overline F$ finite containing $F$ such that every local isomorphism defined on $F$ which extends to $\overline F$ is a local $1$-isomorphism. 
\end{theorem}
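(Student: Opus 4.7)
The plan is to prove both implications separately; the forward direction is short, while the backward direction requires a back-and-forth construction exploiting the countability of $E$.

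For the forward direction, I take $\overline F$ to be the finite superset of $F$ supplied by the definition of prehomogeneity. A local isomorphism $f$ on $F$ that extends to $\overline F$ then extends, by prehomogeneity, to some $\sigma \in \Aut(R)$. For any finite $H \subseteq F$ and any finite $\overline H \supseteq H$ in $E$, the restriction $\sigma|_{\overline H}$ is a local isomorphism extending $f|_H$; applying the same observation to $\sigma^{-1}$ shows that both $f$ and $f^{-1}$ are local $1$-embeddings, so $f$ is a local $1$-isomorphism.

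For the backward direction, I fix a finite $F_0 \subseteq E$ with witness $\overline{F_0}$ from the hypothesis, and a local isomorphism $f \colon F_0 \to E$ that extends to $\overline{F_0}$; the hypothesis immediately gives that $f$ is a local $1$-isomorphism. Along an enumeration $E = \{e_n : n \in \N\}$, I construct a chain $f = g_0 \subseteq g_1 \subseteq \cdots$ of finite local $1$-isomorphisms with $e_n \in \operatorname{dom}(g_{2n+1})$ and $e_n \in \operatorname{im}(g_{2n+2})$, and take the union as an automorphism of $R$ extending $f$. For the forth step I set $F' := \operatorname{dom}(g_{2n}) \cup \{e_n\}$ and let $\overline{F'}$ be the witness provided by the hypothesis applied at $F'$; since $g_{2n}$ is a local $1$-embedding, it extends to a local isomorphism $h$ defined on all of $\overline{F'}$, and then $g_{2n+1} := h|_{F'}$ is a local isomorphism on $F'$ that extends to $\overline{F'}$, hence is a local $1$-isomorphism by that witness. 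The back step is entirely symmetric, applied to $g_{2n+1}^{-1}$ at $\operatorname{im}(g_{2n+1}) \cup \{e_n\}$, using that $g_{2n+1}^{-1}$ is also a local $1$-embedding.

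The main obstacle is preserving the local $1$-isomorphism invariant throughout the back-and-forth, since a mere local isomorphism need not be a local $1$-isomorphism. The hypothesis resolves this by supplying a fresh witness at each new finite set that appears in the construction, and the current local $1$-embedding property guarantees that this witness is always reachable, which upgrades each extension back to a local $1$-isomorphism at the next stage. Countability of $E$ then ensures the construction terminates in $\omega$ steps with an everywhere-defined automorphism of $R$ extending $f$, so the witness $F' = \overline{F_0}$ confirms prehomogeneity at $F_0$.
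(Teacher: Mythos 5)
Your proof is correct. The forward direction is exactly right: an automorphism extending $f$ witnesses the local $1$-embedding property for $f$ and, via its inverse, for $f^{-1}$. The backward direction's back-and-forth is also sound: the invariant that each $g_k$ is a local $1$-isomorphism is what lets $g_{2n}$ (resp.\ $g_{2n+1}^{-1}$) extend to the fresh witness set $\overline{F'}$ supplied by the hypothesis at the enlarged domain, and the hypothesis then upgrades the restriction back to a local $1$-isomorphism, so the invariant propagates; the union of the resulting total, surjective increasing chain of finite local isomorphisms is an automorphism extending $f$, and $\overline{F_0}$ serves as the required finite superset. It is worth noting that the paper does not actually write out a proof of this statement: it refers to Pabion's Proposition~1, which characterizes prehomogeneity in terms of complete types for multirelations, and merely asserts that with the stated condition Pabion's argument extends to structures with infinitely many relations. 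Your argument is therefore a genuinely different and self-contained route: a direct back-and-forth in which the local $1$-isomorphism condition plays the role that realized complete types play in Pabion's proof. What your approach buys is independence from any syntactic apparatus and from the finite-language hypothesis of Pabion's original setting; what the type-theoretic route buys is a connection to the model-theoretic characterizations (atomicity, $\aleph_0$-categoricity) that the paper exploits elsewhere, e.g.\ in Theorem~\ref{thm:unifprehomtest}. The only point you might make more explicit is the (routine) verification that the union of the chain is indeed an automorphism, i.e.\ that totality, surjectivity, and the local isomorphism property on each finite subset suffice.
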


The following result summarizes the main properties of uniform prehomogeneity. Equivalences from $(ii)$ to $(v)$ are in Proposition 3, p.531 of \cite {pabion}, (see also Proposition 3.1, p.696 of \cite{pouzet}); statement $(i)$ is new.

\begin{theorem} \label{thm:unifprehomtest} 
Let $G$ be a permutation group acting on a countable set $E$, and $R$ be a relational structure  on $E$. Then the following properties are equivalent: 
\begin{enumerate}[{(i)}]
\item $G$ is oligomorphic, $Aut (R)= \overline G^{\mathfrak S}$ and $Emb (R)$, the monoid of embeddings of $R$, is equal to $\overline G$; 

\item $R$ is uniformly prehomogeneous and its profile takes only finite values; 

\item  $(a)$ every local $1$-embedding of $R$ with finite domain is a local $1$-isomorphism and   $(b)$ for each integer $n$, the number of $1$-isomorphism  types of $n$-element subsets of $R$ is finite; 
\item $R$ is prehomogeneous and $Aut(R)$ is   oligomorphic; 
\item $R$ is $\aleph_{0}$ categorical and $Th(R)$ is axiomatizable by universal-existential sentences. 
\end{enumerate}

 \end{theorem}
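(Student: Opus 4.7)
Since the Pabion--Saracino equivalences (ii)$\Leftrightarrow$(iii)$\Leftrightarrow$(iv)$\Leftrightarrow$(v) are established in the references cited immediately preceding the statement, my plan is to link (i) to this chain by proving (i)$\Leftrightarrow$(iv).

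For (iv)$\Rightarrow$(i), I take $G = \Aut(R)$. Oligomorphicity is part of (iv), and $\Aut(R)$ is closed under pointwise convergence, so $\GAUT = G$. The inclusion $\GEMB \subseteq Emb(R)$ is immediate: every $f \in \GEMB$ agrees on each finite subset with some automorphism of $R$, hence is a local isomorphism on every finite subset, hence an embedding. For the reverse, let $f \in Emb(R)$ and $F \subseteq E$ be finite; by prehomogeneity and Theorem \ref{thm:prehomogeneity}, choose a finite $\overline{F} \supseteq F$ such that every local isomorphism on $F$ extending to $\overline{F}$ is a local $1$-isomorphism. Since $f$ is an embedding, $f_{\restriction \overline{F}}$ is a local isomorphism, so $f_{\restriction F}$ is a local $1$-isomorphism; in the countable $\aleph_0$-categorical structure $R$, local $1$-isomorphisms with finite domain are partial elementary maps (by $\aleph_0$-saturation) and extend by a standard back-and-forth to automorphisms of $R$. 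Thus $f_{\restriction F}$ coincides with some $g \in G$, showing $f \in \GEMB$.

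For (i)$\Rightarrow$(iv), observe that $\Aut(R) = \GAUT$ is oligomorphic because pointwise-convergence closure preserves orbits on finite subsets: each $g \in \GAUT$ agrees with some element of $G$ on any prescribed finite set. To establish prehomogeneity, fix a finite $F \subseteq E$. By oligomorphicity of $\Aut(R)$ there are only finitely many orbits on local isomorphisms with domain $F$; call such an orbit \emph{bad} if its representatives do not extend to automorphisms of $R$. For a bad representative $h \colon F \to F'$, the identification $Emb(R) = \GEMB$ forbids $h$ from extending to any embedding of $R$ into $R$: such an extension would lie in $\GEMB$ and thus agree on $F$ with some $g \in G \subseteq \GAUT = \Aut(R)$, contradicting badness. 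Consequently $h$ must fail to extend as a local isomorphism on some finite $W_h \subseteq E$, for otherwise a compactness-plus-saturation argument in the $\aleph_0$-categorical $R$ would produce an embedding extending $h$. Taking $\overline{F}$ to be $F$ together with the finitely many witnesses $W_h$, one per bad orbit, yields the finite superset demanded by Theorem \ref{thm:prehomogeneity}.

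The central technical step in (i)$\Rightarrow$(iv) is the compactness-plus-saturation argument producing the finite failure witness $W_h$ for each bad orbit. Concretely, ``$h$ extends to an embedding of $R$ into $R$'' encodes as a first-order theory over $R$ whose finite satisfiability is exactly the local isomorphic extendability of $h$ at every finite level, and $\aleph_0$-saturation of the countable $\aleph_0$-categorical $R$ then realizes the associated type inside $R$ itself. Combined with the oligomorphicity of $G$ (finitely many bad orbits to address), this furnishes the uniform finite witness $\overline{F}$ required for prehomogeneity, closing the equivalence between (i) and (iv).
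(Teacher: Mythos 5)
Your proposal is correct, but it links statement $(i)$ into the Pabion chain by a genuinely different route than the paper. The paper proves $(i)\Rightarrow(iii)$ by taking a local $1$-embedding $f$ on a finite set, using compactness to extend $f$ to an embedding of $R$ into an \emph{extension} $R'$ with the same universal theory, then using universality of the $\aleph_0$-categorical $R$ to pull back into $R$ and invoking $Emb(R)=\GEMB$; prehomogeneity then comes from Theorem \ref{thm:prehomogeneity} via $(iii)\Rightarrow(iv)$. You instead prove $(i)\Rightarrow(iv)$ directly by classifying the finitely many $\Aut(R)$-orbits of local isomorphisms with domain $F$, observing that a ``bad'' one cannot extend to an embedding $R\to R$ (since by $(i)$ every embedding agrees on $F$ with an automorphism), and extracting a finite failure witness $W_h$ per bad orbit. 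The contrapositive step --- if $h$ extends to a local isomorphism on every finite superset then it extends to a full embedding of $R$ into $R$ --- is the real content, and it does work: one builds the embedding point by point, maintaining the invariant that the current finite partial map still extends to every finite superset, each extension step being a finitely satisfiable partial type over a finite parameter set realized by $\aleph_0$-saturation. Your sketch compresses this into ``realizes the associated type inside $R$,'' which elides the iteration and the invariant, but the argument is standard and sound. What your route buys is that it stays entirely inside $R$ (saturation in place of the paper's external extension-plus-universality detour) and makes visible \emph{why} $(i)$ forces prehomogeneity; what the paper's route buys is an independent derivation of the combinatorial condition $(iii)$ and a very short $(iv)\Rightarrow(i)$.

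One step in your $(iv)\Rightarrow(i)$ deserves correction. Having chosen $\overline F$ via Theorem \ref{thm:prehomogeneity} so that $f_{\restriction F}$ is a local $1$-isomorphism, you assert that local $1$-isomorphisms with finite domain are partial elementary ``by $\aleph_0$-saturation'' and then run a back-and-forth. That justification is not right as stated (it is not an obvious general fact about $\aleph_0$-categorical structures), and it is unnecessary: if you instead choose $\overline F$ as in the \emph{definition} of prehomogeneity, the fact that the embedding $f$ restricts to a local isomorphism on $\overline F$ immediately yields that $f_{\restriction F}$ extends to an automorphism, which is exactly the paper's three-line argument. Alternatively, your chain can be repaired by noting that a local $1$-embedding with finite domain $F$ extends to every finite superset, in particular to the set $\overline F$ furnished by the definition of prehomogeneity, and hence extends to an automorphism for that reason rather than by saturation.
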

 
\begin{proof}
Given Pabion's result (Proposition 3, p.531 of \cite {pabion}) it is actually enough to show $(i)\Rightarrow (iii)$ and $(iv)\Rightarrow (i)$. However, we also show that $(iii)\Rightarrow (i)$. 

$(i)\Rightarrow (iii)$. Since $G$ is oligomorphic, $Aut(R)$ is oligomorphic too, hence $(b)$ holds. To prove that $(a)$ holds, let $f$ be a local $1$-embedding of $R$  mapping a finite subset $F$ of $E$ onto $F'$. The map $f$ extends to an embedding $\overline f$ from $R$ into some extension $R'$, such that $F'$ has the same $1$-isomorphism type in $R$ and in $R'$ (indeed, if $\vert F\vert= n$,  add $n$ constants to the language of $R$ interpreted as the $n$ elements $a_1, \dots, a_n$ of $F$ and $f(a_1), \dots, f( a_n)$ of  $F'$;  the universal theory $T$ of $(R, a_1, \dots, a_n)$  contains the universal theory $T'$ of $(R, f(a_1), \dots,f(a_n))$  hence there is  some  extension  $R'$ of $(R, f(a_1), \dots,f(a_n))$ whose universal theory is $T'$ (a well known consequence of  the Compactness theorem of first order logic). Since $Aut(R)$ is oligomorphic, the profile of $R$ takes only finite values, hence the identity map on  $F'$ is a local $1$-isomorphism of $R$ into $R'$. Now, since 
 $Aut(R)$ is oligomorphic, the complete theory of $R$ is $\aleph_0$-categorical. It follows  that $R$ is universal in the universal theory of $R$ and thus there is an embedding 
 $g$ of $R'$ into $R$. The map $g\circ \overline f$ is an embedding of $R$, hence, according to our hypothesis, its   restriction  to $F$ is the restriction of an automorphism. It follows that this restriction is a $1$-embedding, hence $f$ is a $1$-isomorphism, as claimed.  
 
 $(iii)\Rightarrow (iv)$. The condition in Theorem \ref{thm:prehomogeneity} is satisfied, hence $R$ is prehomogeneous. Due to $(iii)(b)$, $Aut(R)$ is oligomorphic.  

$(iv)\Rightarrow (i)$.  It suffices to see that $Emb(R)= \overline{Aut(R)}$. Without any condition, $\overline{Aut(R)}\subseteq Emb(R)$. Let $g\in Emb(R)$. We need to show that given $F$, finite subset of $E$, there is an automorphism $\overline g$ which agrees with $g$ on $f$. Since the  restriction $g_{\restriction F}$ of $g$ to $F$ extends to every finite subset of $E$ it extends to $\overline F$; since $R$ is prehomogeneous,  $g_{\restriction F}$ extends to an automorphism $\overline g$, hence $g\in \overline{Aut(R)}$. 
 

\end{proof}

 Since our main result is on $\aleph_0$-categorical structures which have finite profile,  \textbf{we consider only relational structures with finite profile.} This allows us to code restrictions of such relational structures by open formulas.

\section{The number of siblings of monomorphic  structures}\label{monomorphic structure}

The purpose of this section is to prove a first result which allows us to count the number of siblings based on structural properties.

\begin{theorem}\label{thm:sibrational}
If a countable relational structure $R$ is monomorphic,  uniformly prehomogeneous and $Aut(R)$ is not the symmetric group, then $sib(R)= 2^{\aleph_{0}}$. 
\end{theorem}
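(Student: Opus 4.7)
The approach is to combine Cameron's classification of monomorphic oligomorphic permutation groups with Frasnay's chainability theorem to reduce $R$ to a canonical chain-like structure on $\Q$, and then to exhibit $2^{\aleph_{0}}$ pairwise non-isomorphic siblings by varying the underlying chain.

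First, since $R$ is uniformly prehomogeneous with finite profile, Theorem~\ref{thm:unifprehomtest} yields that $R$ is $\aleph_{0}$-categorical and that $\Aut(R)$ is oligomorphic. Monomorphy of $R$ is exactly the statement that $\Aut(R)$ acts transitively on the unordered $n$-subsets of $E$ for every $n$. Cameron's classification of monomorphic oligomorphic permutation groups on a countable set then forces $\Aut(R)$ to be, up to permutation-group isomorphism, one of exactly five groups: the full symmetric group on $E$, or the automorphism group of one of four canonical relational structures on the countable dense linear order $(\Q,\le)$, namely the order itself, the betweenness relation, the circular order, or the separation relation. By the hypothesis, $\Aut(R)$ is not the full symmetric group, so one of the four nontrivial cases holds. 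Frasnay's theorem then guarantees that $R$ is chainable and, in fact, interdefinable with the corresponding canonical structure on $\Q$; in particular, two countable models in any of these four classes are isomorphic if and only if the underlying chain-like structures are, modulo a natural symmetry (reversal for betweenness and separation, rotation for circular order) that is at most countable-to-one on isomorphism classes.

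It therefore suffices to produce $2^{\aleph_{0}}$ pairwise non-isomorphic siblings of the canonical chain-structure on $\Q$. This is a classical fact about dense countable chains: for each countable scattered linear order $L$, the chain $L+\Q+L$ embeds into $\Q$ (since $\Q$ is universal for countable chains) and contains $\Q$ as a subchain, hence is a sibling of $(\Q,\le)$; since there are $2^{\aleph_{0}}$ pairwise non-isomorphic countable scattered linear orders, this yields $2^{\aleph_{0}}$ non-isomorphic siblings of $(\Q,\le)$. The same construction, combined with universality of $R$ in its age (inherited from universality of the canonical structure), gives $2^{\aleph_{0}}$ non-isomorphic siblings of $R$ in each of the four cases.

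The main technical subtlety I expect is the lifting step: one must verify that pairwise non-isomorphic chains indeed give rise to pairwise non-isomorphic $R$-structures. For the pure order case this is tautological. For the betweenness, circular and separation cases, the forgetful map from a chain to its reduct identifies a chain at most with its reverse, or with its reverses and rotations; this identification is only countable-to-one on isomorphism classes, so $2^{\aleph_{0}}$ pairwise non-isomorphic chains still yield $2^{\aleph_{0}}$ pairwise non-isomorphic reducts. Together with the embedding of each candidate back into $R$, provided by universality, this gives $\sib(R)=2^{\aleph_{0}}$ and completes the argument.
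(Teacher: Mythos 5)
Your overall strategy coincides with the paper's: reduce via Cameron's classification to one of the four non-trivial monomorphic closed groups, realize the witnesses as restrictions to subsets of $\Q$ obtained by splicing a scattered chain into a dense one, and count $2^{\aleph_0}$ of them. But there is a genuine gap at the step you yourself flag as "the main technical subtlety", and your proposed resolution does not close it. The problem is not that the forgetful map from chains to reducts is countable-to-one; it is that an abstract isomorphism $f\colon R_{\restriction A}\to R_{\restriction A'}$ between two restrictions of $R$ need not respect the underlying chain structure at all. Since $R$ is only uniformly prehomogeneous (not homogeneous), a finite restriction $f_{\restriction F}$ is only guaranteed to extend to an automorphism of $R$ if it extends to a specific finite superset $\overline F$, which need not be contained in $A$; so one cannot conclude that $f$ is a local isomorphism of the canonical structure $M$, and "interdefinability" of $R$ with $M$ (in the sense of $\Aut(R)=\Aut(M)$, or even mutual first-order definability) does not transfer to isomorphisms of induced substructures, because such isomorphisms do not preserve relations defined with quantifiers over the whole domain. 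The paper closes exactly this gap (Claim \ref{claim:iso}) by forming the bichain $(A,\leq_A,\leq'_A)$ with $\leq'_A$ the pullback of the order on $A'$ under $f$, showing its indicative groups sit inside $Ind_n(R,\Q)$, and invoking Frasnay's theorem on indicative sequences of bichains (Theorem \ref{thm:frasnay84}) together with Cameron's list to force $Ind_n(B)\subseteq Ind_n(M,\Q)$, whence $f$ is a local isomorphism of $M$. Your proposal cites Frasnay only for chainability and never uses the indicative-sequence machinery, so the non-isomorphism of the $R_{\restriction C_s}$ is not established.

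Two smaller points. First, "monomorphy of $R$ is exactly the statement that $\Aut(R)$ acts transitively on unordered $n$-subsets" is false as stated (e.g. $1+\Q$ is monomorphic, being chainable, but its automorphism group is not set-transitive); the implication you need is Theorem \ref{freely rational}, which genuinely uses uniform prehomogeneity and is not a definitional unwinding. Second, Frasnay's bichain theorem requires the first component to have no minimum and no maximum, so your chains $L+\Q+L$ (which may have endpoints when $L$ does) should be replaced by chains with dense endless pieces at both ends and the scattered code in the middle, as in the paper's $A_0\cup A_1^s\cup A_2$; this also simplifies recognizing the scattered part inside the betweenness, circular, and separation reducts.
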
 

\subsection{Free-interpretability, chainability and monomorphy} 

Let $R$ and $S$ be two  relational structures on the same domain $E$. We say that $R$ is \emph{freely interpretable by}   $S$ if every local isomorphism of $S$ is a local isomorphism of $R$.  If $S$ is a chain, we say that $S$ \emph{chains} $R$, and thus we say that $R$ is \emph{chainable} if some chain $S$ chains $R$.

Now let $p$ be a non-negative integer; a relational structure $R$ is said to be \emph{$p$-monomorphic}  if its restrictions to finite sets of the same cardinality $p$ are all isomorphic; the relational structure is \emph{monomorphic} if it is $p$-monomorphic for every $p$. Since two finite chains with the same cardinality are isomorphic, chains are monomorphic structures and hence so are chainable relational structures. Conversely, Fra\"{\i}ss\'e \cite {fraisse} showed that every infinite monomorphic relational structure is chainable. 


\medskip

We now consider three well known structures associated to a chain $C= (E \leq)$:
 \begin{itemize}
\item The \emph{betweeness relation $B_C= (E, b_C)$ associated to $C$},  where $b_C$ is the set of triples $(x_1,x_2,x_3)$ such that either $x_1<x_2<x_3$ or $x_3<x_2<x_1$.
\item The \emph{circular order  $T_C= (E, t_C)$ associated to $C$}, where $t_C$ is the set of triples 
$(x_1,x_2,x_3)$ such that $x_{\sigma(1)}<x_{\sigma(2)}<x_{\sigma(3)}$ for some circular permutation $\sigma$ of $\{1,2,3\}$.  
\item The \emph{betweeness relation  $D_C=(E, d_C)$ associated to the circular order}, where $d_C$ is the set of quadruples 
$(x_1,x_2,x_3, x_4)$ such that $x_{\sigma(1)}<x_{\sigma(2)}<x_{\sigma(3)}<x_{\sigma(4)}$ or $x_{\sigma(4)}<x_{\sigma(3)}<x_{\sigma(2)}<x_{\sigma(1)}$ for some circular permutation $\sigma$ of $\{1,2,3, 4\}$.  
 \end{itemize}
 
 By construction, these three structures are chainable by $C$. Furthermore, if $C$ is isomorphic to the chain of rational numbers, these three structures are actually homogeneous.

  Moving to group properties and following Cameron \cite{cameron}, a  group of permutations on a set $E$  is \emph{monomorphic} if it has just  one orbit for every $n$-element set (another terminology is \emph{set-homogeneous}). Cameron  proved that on a countable set there are essentially five monomorphic closed groups: 
   \begin{theorem}\cite{cameron}\label{thm:cameronthm}
   A monomorphic closed group on a countable set is isomorphic, as a permutation group, to one of the following groups:
   \begin{enumerate}[{(a)}]
   \item $\mathfrak S(\Q)$, the full symmetric group on the set of  rationals;
   \item $Aut(\Q)$, the automorphism group of the chain of rational numbers; 
   \item $Aut (B_{\Q})$ the automorphism group of the betweeness relation associated to the chain of rational numbers;  
   \item $Aut(T_{\Q})$ the automorphism group of  the circular order associated with the chain of rational numbers; 
  \item   $Aut(D_{\Q})$ the automorphism group of the betweeness relation  associated to the circular order on the rationals.
   \end{enumerate}
\end{theorem}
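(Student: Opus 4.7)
The plan is to reduce via the Fraïssé monomorphy theorem (cited in the section) to working over a rational chain, and then classify the closed monomorphic overgroups of $\Aut(\Q)$. Since $G$ is closed on the countable set $E$, it equals $\Aut(R)$ for some homogeneous relational structure $R$; the monomorphy hypothesis translates to $R$ having profile identically $1$, so $R$ is monomorphic in Fraïssé's combinatorial sense. Fraïssé's theorem then produces a chain $C = (E,\le)$ every local isomorphism of which is a local isomorphism of $R$, so $\Aut(C) \subseteq G$.

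Next I would argue $C \cong (\Q,<)$. Endpoints or jumps in $C$ would yield intrinsic order-theoretic distinctions among $1$-sets or $2$-sets of $E$ (fixed points of $\Aut(C)$, or pairs of consecutive elements); since $\Aut(C) \le G$ and $G$ is transitive on singletons and on $2$-sets, these distinctions propagate: applying $g \in G$ to $C$ produces another chain $g(C)$ chaining $R$, and comparing the two reveals $\Aut(C)$-invariants that are incompatible with the transitivity of $G$ unless $C$ is dense and without endpoints. By Cantor's characterization this forces $C \cong (\Q,<)$, so $H := \Aut(C) \cong \Aut(\Q)$.

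The core is then a case analysis on the number of $G$-orbits of ordered $n$-tuples, for $n=2,3,4$. For $n=2$, $2$-set transitivity of $G$ allows at most two ordered-pair orbits, yielding either $G$ is $2$-transitive, or $G$ preserves $\le$ modulo reversal — placing $G$ in $\{H, \Aut(B_C)\}$. If $G$ is $2$-transitive, pass to $n=3$: the setwise stabilizer of a $3$-set acts on it as a transitive subgroup of $\mathfrak S_3$, so either $\mathfrak S_3$ (giving $3$-transitivity) or the cyclic group $C_3$ (forcing $G \subseteq \Aut(T_C)$). If $G$ is $3$-transitive, the $n=4$ analysis yields either full $\mathfrak S_4$-action on every $4$-set or the Klein four-group preserving $D_C$ (so $G \subseteq \Aut(D_C)$). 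Finally, if $G$ is $k$-transitive for every $k$, density of $\Q$ together with closedness of $G$ allow one to build every finite partial permutation of $E$ inside $G$, forcing $G = \mathfrak S(\Q)$. In each of the four bounded cases, the reverse inclusion $\Aut(X) \subseteq G$ is immediate because $\Aut(X)$ for $X \in \{C, B_C, T_C, D_C\}$ is generated by $H$ together with one reversal or rotation, which lies in $G$ by the orbit analysis.

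The main technical obstacle is showing that the hierarchy terminates at $n=4$: that any closed monomorphic group properly containing $\Aut(D_\Q)$ must already be $\mathfrak S(\Q)$, and more generally that no exotic $n$-ary invariant for $n \ge 5$ can arise in a monomorphic closed group on $\Q$. This amounts to a finitary classification of the transitive subgroups of $\mathfrak S_n$ that can be realized as the stabilizer-on-itself action of a setwise stabilizer of an $n$-set in a monomorphic closed group. I expect this to reduce to the classical theorem (due to Kantor and others) classifying $k$-homogeneous but not $k$-transitive finite permutation groups, which is the standard ingredient powering Cameron's argument and which is what limits the list to exactly the five groups above.
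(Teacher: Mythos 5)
You should first note that the paper does not actually prove Theorem \ref{thm:cameronthm}: it is imported from Cameron \cite{cameron}, and the route the paper itself gestures at (via Theorem \ref{freely rational}, proved in \cite{pouzet 81}) derives it from Frasnay's classification of indicative group sequences (Theorem \ref{thm:frasnay84}). Your sketch instead tries to reconstruct Cameron's original orbit-analytic argument, which is a legitimate alternative plan, but as written it has genuine gaps at exactly the points where the real work lies.

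First, your density step is incorrect as argued. A chain $C$ chaining $R$ need not be dense or without endpoints, and its order-theoretic features are \emph{not} $G$-invariants: only the local isomorphisms of $C$ lie in $G$, not conversely, so nothing ``propagates''. Concretely, if $G=\mathfrak S(\Q)$ then $R$ is pure equality and a chain of order type $\omega$ chains $R$, while $G$ is transitive on $1$-sets and $2$-sets --- no contradiction arises. Endpoints and jumps do become $G$-invariants \emph{after} you know $G$ preserves a structure derived from $C$ (as in the $B_C$ or $T_C$ cases), so the argument must be reordered; alternatively one needs the transport-plus-uniqueness argument behind Theorem \ref{freely rational} (move $R$ along a bijection to a structure chained by $\Q$ and invoke uniqueness of the uniformly prehomogeneous structure with a given age), which is quite different from what you wrote. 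Second, your case tree at $n=2,3$ misfires: $\Aut(B_\Q)$ is $2$-transitive (compose with a reversal), so it cannot be isolated at the ``two ordered-pair orbits'' stage as you claim; and the setwise stabilizer of a $3$-set in a $2$-transitive monomorphic group need not induce a \emph{transitive} subgroup of $\mathfrak S_3$ --- for $\Aut(B_\Q)$ it induces $\mathfrak J(3)=\{\mathrm{id},(1\,3)\}$, which breaks your dichotomy ``$\mathfrak S_3$ or $C_3$''. The correct bookkeeping is via the full sequence of induced groups $\mathfrak I, \mathfrak J, \mathfrak T, \mathfrak D, \mathfrak S$. Third, and most seriously, the termination at $n=4$ --- the actual content of the theorem --- is only ``expected'' in your proposal, and the cited ingredient is not the right one: Kantor's classification concerns finite $k$-homogeneous, not $k$-transitive permutation groups and does not directly govern which compatible sequences of induced groups $G_n\leq \mathfrak S_n$ can occur here. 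What fills this hole is either Cameron's Ramsey-theoretic argument or Frasnay's classification of the admissible group sequences (Theorem \ref{thm:frasnay84}, \cite{frasnay 65}), and since you defer it, the proposal is an outline rather than a proof. (Your final step, that a closed highly transitive group equals $\mathfrak S(\Q)$, is fine; but the reverse inclusions in the $T_C$ and $D_C$ cases need the closedness of $G$, since $\Aut(T_\Q)$ is not literally generated by $\Aut(\Q)$ and a single rotation --- rotations at all cuts are needed, and only closure recovers them.)
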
 

We will need the following consequence of Theorem \ref{thm:cameronthm} in the proof of $(a)$ of Lemma \ref{lem:notfull}. 

\begin{lemma}\label{lem:nodescending} A  descending chain of monomorphic closed groups on a countable set has at most  four terms.
\end{lemma}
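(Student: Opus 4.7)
The plan is to embed any descending chain of monomorphic closed groups on $E$ into the finite five-element lattice associated with a single linear order on $E$, and then read off the bound from its longest chain.

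Let $G_1 \supsetneq G_2 \supsetneq \cdots \supsetneq G_n$ be such a chain. If $G_n = \mathfrak{S}(E)$ then $n=1$. Otherwise, Theorem \ref{thm:cameronthm} applied to $G_n$ yields, after transporting the canonical structure along the witnessing bijection, a linear order $C$ on $E$ of order type $\mathbb{Q}$ and an identification $G_n = \Aut(R_n)$ for some $R_n \in \{C, B_C, T_C, D_C\}$; in particular $G_n \supseteq \Aut(C)$. The content of the proof then reduces to classifying the closed supergroups of $\Aut(C)$ on $E$: I claim these are exactly
\[
\Aut(C),\quad \Aut(B_C),\quad \Aut(T_C),\quad \Aut(D_C),\quad \mathfrak{S}(E).
\]

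To establish the claim, I would first observe that any closed supergroup $H \supseteq \Aut(C)$ is automatically monomorphic, since $\Aut(C)$ already acts transitively on $k$-element subsets of $E$ for every $k$ (because $C \cong \mathbb{Q}$ is homogeneous). Theorem \ref{thm:cameronthm} then gives $H = \Aut(R)$ for $R$ of one of the five canonical types built from some chain $C'$ on $E$ of order type $\mathbb{Q}$. The inclusion $\Aut(C) \subseteq H$ forces $R$ to be preserved by every $C$-automorphism; inspecting in each arity the relations preserved by $\Aut(C)$ --- which are, up to Boolean combinations, generated by $C$, $B_C$, $T_C$, and $D_C$ alone --- pins down $R$ as one of the five canonical structures built from $C$ itself. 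The resulting five groups form a lattice
\[
\Aut(C)\ \subsetneq\ \Aut(B_C),\ \Aut(T_C)\ \subsetneq\ \Aut(D_C)\ \subsetneq\ \mathfrak{S}(E),
\]
with $\Aut(B_C)$ and $\Aut(T_C)$ mutually incomparable, so its longest chain contains $4$ elements.

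Since $G_1 \supsetneq \cdots \supsetneq G_n$ sits inside this lattice as a strictly descending chain, we conclude $n \leq 4$. The main obstacle is the classification step --- verifying that the relations preserved by $\Aut(C)$ are generated in each arity by $C$, $B_C$, $T_C$, $D_C$ --- which is essentially the reducts classification for $(\mathbb{Q},<)$ underlying Cameron's theorem. Once that is in place, the bound is immediate.
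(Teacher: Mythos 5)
Your overall strategy is viable and genuinely different from the paper's, but it is not self-contained as written. The paper never classifies the closed supergroups of $\Aut(C)$. Its proof rests on a single observation: if $G'\subseteq G$ are closed oligomorphic groups that are isomorphic \emph{as permutation groups}, then for each $n$ the orbit partition of $E^{n}$ under $G'$ refines the one under $G$; both partitions are finite and have the same number of classes, hence they coincide, and since the groups are closed, $G=G'$. Consequently a strictly descending chain of monomorphic closed groups realizes pairwise distinct Cameron types, and the bound follows from Theorem \ref{thm:cameronthm} alone, in its isomorphism-type formulation --- which is all the paper actually records.

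Your route instead anchors the whole chain above a single $\Aut(C)$ and reads the bound off the lattice of its closed supergroups. The step you yourself flag as ``the main obstacle'' is a genuine gap: the assertion that the relations preserved by $\Aut(C)$ are generated in each arity by $C$, $B_C$, $T_C$, $D_C$ \emph{is} the reducts classification of $(\Q,<)$. It does not follow from Theorem \ref{thm:cameronthm} as stated, which only identifies abstract permutation-group isomorphism types over some chain $C'$ that need not bear any relation to $C$; ``inspecting in each arity'' is precisely the content one would have to prove, and your argument for the lemma therefore rests on an unproved classification at its central point. If you import that classification as an external black box, the proof closes, and it even makes transparent something the paper's argument leaves implicit: why the answer is four rather than five, namely the incomparability of $\Aut(B_C)$ and $\Aut(T_C)$. (The pigeonhole on five types by itself only gives ``at most five''; one must still rule out a nested pair of $B$-type and $T$-type groups, e.g.\ by counting orbits on injective triples for one direction and, for the other, by noting that the reversal induces a transposition on each $3$-set while a circular-order automorphism induces only cyclic permutations there.) So your approach buys the full lattice picture at the cost of a much heavier input; the paper's orbit-counting reduction avoids that input entirely.
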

\begin{proof}It suffices to show that if $R$ and $R'$ are two relational  structures on the same set $E$ such that $Aut(R')\subseteq Aut(R)$ and $R$ isomorphic to $R'$, then $Aut(R)=Aut(R')$. This conclusion in fact simply follows if  $Aut(R)$ and $Aut(R')$ are oligomorphic. Indeed, for each integer $n$, the partition of $E^{n}$ into orbits for the action of $Aut(R')$ is included into the partition of $E^{n}$ into orbits for the action of $Aut(R)$. Since these groups are oligomorphic and isomorphic as permutation groups, these partitions have  finitely many classes  and have the same number of classes, hence are equal. The fact that these groups are equal follows.
\end{proof}

Cameron's theorem implies  that every monomorphic closed group $G$ on a countable set  is the automorphism group of  some relational structure $R$ chainable  by  a chain isomorphic to the chain of rational numbers. In fact, it implies that every $R$ such that $Aut(R)= G$ has this property, and thus the following result. 
 
\begin{theorem}\label{freely rational}  Let   $R$ be  a countable structure; then the following properties are equivalent:
\begin{enumerate}[{(i)}]
\item  $R$ is chainable  by a chain  isomorphic to the chain  of rational numbers; 
 \item $R$ is monomorphic and uniformly prehomogeneous;  
 \item $Aut(R)$ is monomorphic. 
 \end{enumerate}\end{theorem}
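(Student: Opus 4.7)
My plan is to run two intertwined cycles: one showing (i)$\Leftrightarrow$(iii) using Cameron's theorem together with the elementary observation that chains contained in a structure induce symmetries; the other upgrading these to (ii) via Theorem \ref{thm:unifprehomtest}, with the direction (ii)$\Rightarrow$(i) being the genuinely delicate one that calls on Frasnay.

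For (i)$\Rightarrow$(iii), the chain $C\cong(\Q,\le)$ chaining $R$ yields $Aut(C)\subseteq Aut(R)$. Since $(\Q,\le)$ is homogeneous, $Aut(\Q)$ already has a single orbit on the $n$-element subsets of $\Q$ for each $n$, and this transitivity transfers to the larger group $Aut(R)$. For (iii)$\Rightarrow$(i), Cameron's theorem \ref{thm:cameronthm} identifies $Aut(R)$, up to conjugacy as a permutation group, with one of the five listed groups on $\Q$, each of which contains $Aut(\Q)$. Pulling back the rational order via this conjugation produces a chain $C$ on $E$ with $C\cong(\Q,\le)$ and $Aut(C)\subseteq Aut(R)$; since $C$ is homogeneous, every finite local isomorphism of $C$ is the restriction of an element of $Aut(C)\subseteq Aut(R)$, hence is a local isomorphism of $R$, so $C$ chains $R$.

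For (iii)$\Rightarrow$(ii), monomorphy of $Aut(R)$ forces oligomorphy, since the number of orbits on $n$-tuples is at most $n!$ times the single orbit on $n$-sets. Via the cycle above, $R$ is chained by some $C\cong\Q$, and by Ryll-Nardzewski applied to the $\aleph_0$-categorical structures $R$ and the canonical homogeneous companion $R'$ (one of the trivial structure, $(\Q,\le)$, $B_{\Q}$, $T_{\Q}$, $D_{\Q}$, transported to $E$), the two share the same orbits on all finite tuples and are $\emptyset$-interdefinable. This is enough to conclude that $R$ is prehomogeneous: for any finite $F\subseteq E$, one may choose $\bar F\supseteq F$ large enough to contain witnesses distinguishing the finitely many $Aut(R)$-orbits on tuples extending $F$, thereby forcing any local isomorphism of $R$ on $F$ extending to $\bar F$ to agree on $F$ with the restriction of an automorphism. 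Combining prehomogeneity, oligomorphy and finite profile, Theorem \ref{thm:unifprehomtest}(iv)$\Rightarrow$(ii) delivers uniform prehomogeneity. Monomorphy of $R$ as a relational structure is immediate from the transitivity of $Aut(R)$ on $n$-subsets.

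The main obstacle is (ii)$\Rightarrow$(i). From Theorem \ref{thm:unifprehomtest}(ii)$\Rightarrow$(iv), $Aut(R)$ is oligomorphic and $R$ is prehomogeneous. Since $R$ is countably infinite and monomorphic, Fra\"{\i}ss\'e's theorem supplies a chain $C$ on $E$ chaining $R$. The crux, and the real work of the theorem, is to upgrade this to a chain isomorphic to $(\Q,\le)$; this is where Frasnay's result on chainable structures enters, ruling out the degenerate chain types (e.g.\ well-orderings) incompatible with the abundance of symmetries provided by the oligomorphy of $Aut(R)$, and forcing $C$ to be dense without endpoints. Once (i) is in hand, the already-established (i)$\Rightarrow$(iii) closes the cycle.
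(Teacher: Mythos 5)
Your handling of (i)$\Rightarrow$(iii), and of (iii)$\Rightarrow$(i) via Cameron's Theorem~\ref{thm:cameronthm}, is sound (the paper itself observes that Cameron's theorem yields this implication, though its own cycle avoids it). The genuine gap is in (ii)$\Rightarrow$(i), exactly the step you call the crux and then do not carry out. Fra\"{\i}ss\'e's theorem gives \emph{some} chain $C$ chaining $R$, but nothing forces that particular $C$ to be dense without endpoints: the structure on $\N$ with no relations is monomorphic and uniformly prehomogeneous and is chained by $\omega$. What has to be shown is that some \emph{other} chain, of type $\Q$, also chains $R$, and Frasnay's Theorem~\ref{thm:frasnay84} cannot be invoked for this in the way you suggest: it is a statement about a bichain, i.e.\ two linear orders on the same set, and at this point of your argument there is only one chain and no second order to compare it with. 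The paper's actual mechanism is entirely different. Since $C$ chains $R$, there is a free operator $P$ with $P(C)=R$; applying $P$ to $(\Q,\leq)$ produces a structure $R'$ chainable by $\Q$, hence uniformly prehomogeneous by the already-established composite (i)$\Rightarrow$(iii)$\Rightarrow$(ii); since $R'$ has the same age as $R$ and both are uniformly prehomogeneous, a preservation lemma (uniqueness of the countable uniformly prehomogeneous structure with a given age) gives $R'\cong R$, and transporting the rational order along this isomorphism yields the required chain. None of this construction appears in your sketch, and without it the implication is unproved.

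A secondary weak point is your argument for prehomogeneity in (iii)$\Rightarrow$(ii). The principle ``enlarge $F$ to $\overline F$ containing witnesses separating the finitely many orbits, so that any local isomorphism on $F$ extending to $\overline F$ agrees with an automorphism'' is not valid for general $\aleph_0$-categorical structures: for $R=1+\Q$ the map sending the minimum to a non-minimal element is a local isomorphism on a singleton that extends to every finite $\overline F$ on the domain side, yet extends to no automorphism --- the separating feature is existential and lives on the image side, so no choice of $\overline F$ detects it. The conclusion is true in the monomorphic case, but that is precisely what must be proved; the paper does it by a different device, showing via Theorem~\ref{thm:unifprehomtest}(iii) that every local $1$-embedding $f$ with finite domain $F$ is invertible: compose $f$ with an automorphism $\sigma$ carrying $f(F)$ back onto $F$ (available by monomorphy of the group), note that some finite iterate of $\sigma\circ f$ is the identity on $F$, and read off $f^{-1}$ as a composition of local $1$-embeddings.
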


Theorem \ref {freely rational}  was proved in \cite {pouzet 81} ( see 2.6 and 2.7  and line 18 of page 321) by direct arguments. It was a step in a proof of Cameron's theorem based on Frasnay's result. We outline a proof.

 \begin{proof} $(i)\Rightarrow (iii)$.  If $R$ is chainable  by the chain of rational numbers then $Aut(R)$ is an overgroup of $Aut(\Q)$ hence it is monomorphic. \\
$(iii)\Rightarrow (ii)$. If $Aut(R)$ is monomorphic then trivially, $R$ is not only monomorphic but any two finite subsets of the same size are $1$-isomorphic.  
Thus to show that $R$ is uniformly prehomogeneous, it suffices by Theorem \ref{thm:unifprehomtest} to show that every local $1$-embedding $f$ with finite domain $F$ included in the domain $E$ of $R$ is invertible by a  local  $1$-embedding. Indeed, let $F'$ be the image of $F$. Since $Aut(R)$ is monomorphic there is an automorphism, say $\sigma$, which carries $F'$ onto $F$. Evidently, $\sigma$ is a $1$-local embedding, hence $\sigma_{\restriction F'} \circ f$ is a $1$-local  embedding;   furthermore all the iterates of that map are $1$-local embeddings. Since $F$ is finite, an $n$-th iterate is the identity on $F$, hence $f^{-1}= (\sigma \circ f)^{n-1}\circ \sigma $ is a $1$-local embedding as claimed.  \\
 $(ii)\Rightarrow (i)$. Part of the argument  is based on the following fact about free interpretability. Let $R$ and $S$ with the same base and let $\mu$ and $\nu$ be the respective signatures of $R$ and $S$. If the signature $\nu$ is finite, then $R$ is freely interpretable by $S$ if and only if there exists a map $P$ associating to every relational structure $S'$ of signature $\nu$ a relational structure $R'$ of signature  $\mu$ on the same domain in such a way that $(a)$ $P(S)=R$ and $(b)$ every local isomorphism $f$ of $S'$ into $S''$ is a local isomorphism of $P(S')$ into $P(S'')$ (see Fra\"{\i}ss\'{e} \cite{fraisse}). Now the proof of the implication goes as follows.  Suppose that  $R$  is monomorphic, then $R$ is chainable  by some chain, say $C$. The free operator transforming $C$ into $R$ will transform $\Q$ into some structure $R'$.  It turns out that $R'$ is isomorphic to $R$. Indeed,  according to the implication $(i)\Rightarrow (ii)$, already proven, $R'$ is uniformly prehomogeneous;  it has the same age as $R$ which is uniformly prehomogeneous, hence it is isomorphic to $R$ (this is essentially the argument in \cite {pouzet 81}, 2.5 Lemme de pr\'eservation, page 320). Hence $R$ is  chainable  by some chain $D$ isomorphic to the chain of rationals.   \end{proof} 

 \begin{remark}
If $R$ is only known to be chainable, then it does not follow that $Aut(R)$ is monomorphic, even if $Aut(R)$ is oligomorphic as the example  $R=1+\Q$ shows.
\end{remark}

\subsection{Group-sequences, bichains and indicative sequences}

\subsubsection{Group-sequences}
Let $R$ be a  chainable relational structure with domain $E$,  and $C$ be a chain (with same domain $E$) chaining $R$. Let $n$ be an integer, $n\leq \vert E\vert$, let $A$ be a $n$-element subset of  $E$ and $c_ A$ be the unique isomorphism of the natural chain on $\underline{n}=\{1, \dots,  n \}$ onto $C_{\restriction A}$.  The set of permutations $\sigma$ of $\underline n$ of the form $c_A^{-1}\circ \tau \circ c_A$ for $\tau \in Aut(R_{\restriction A})$ forms  a group. Since $C$ chains $R$, this group is independent of the $n$-element set $A$ and we denote it by $Ind_n (R,C)$. The sequence of these groups is called the \emph{group-sequence} of the pair $(R,C)$. 
 
For each positive integer $n$ we define the following permutation groups on $\underline n$:
\begin{itemize}
\renewcommand{\labelitemi}{$\bullet$}
\item $\mathfrak S(n)$ consisting of  all permutations;
\item $\mathfrak I(n)$ consisting of only the  identity; 
\item  $\mathfrak J(n)$  consisting of  the identity and the reversal $r$ transforming each $k$ into $n-k+1$; 
\item $\mathfrak T(n)$ consisting  of circular permutations;
\item $\mathfrak D(n)$  consisting of the product of  $\mathfrak T(n)$ and $\mathfrak J(n)$, that is the dihedral group. 
\end{itemize}

Let $\mathfrak S$, $\mathfrak I$, $\mathfrak J$, $\mathfrak T$, $\mathfrak D$ each be the sequence of the above corresponding groups for $n\in \N$.
Then clearly we have the following result connecting these sequences and our previous structures. 

\begin{lemma}\label{lem:groupseq} The sequences   $\mathfrak S$, $\mathfrak I$, $\mathfrak J$, $\mathfrak T$, $\mathfrak D$ are the sequences $(Ind_n(R, \Q))_{n\in \N}$ where $R$ is successively $(\Q, =)$, $(\Q, \leq)$, $B_{\Q}$,  $T_{\Q}$ and  $D_{\Q}$. 
\end{lemma}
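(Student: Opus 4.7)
The statement is a direct verification: for each of the five listed relational structures $R$ on $\Q$, we must compute $Aut(R_{\restriction A})$ for an arbitrary $n$-element subset $A \subseteq \Q$, and then read off the conjugate subgroup of $\mathfrak S(\underline n)$ via the order-preserving bijection $c_A : \underline n \to A$. Since $\Q$ chains each of the five structures (by the very definition of $B_\Q$, $T_\Q$, $D_\Q$), the group $Ind_n(R,\Q)$ is well-defined, so without loss of generality we may take $A = \underline n \subseteq \Q$ with its natural order; then $c_A = \Id$ and $Ind_n(R,\Q) = Aut(R_{\restriction \underline n})$.

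My plan is to treat each case in turn with the appropriate elementary argument. For $R=(\Q,=)$ the restriction to $\underline n$ is pure equality and every permutation is an automorphism, giving $\mathfrak S(n)$. For $R=(\Q,\leq)$ the restriction is a strict finite chain on $\underline n$, whose only automorphism is $\Id$, giving $\mathfrak I(n)$. For $R=B_\Q$ the induced betweenness on $\underline n$ is preserved exactly by those $\sigma$ for which $\sigma(i)$ lies between $\sigma(j)$ and $\sigma(k)$ iff $i$ lies between $j$ and $k$; a standard argument (consider the image of $\{1,n\}$ and then of any intermediate element) shows that $\sigma$ must be order-preserving or order-reversing, giving $\mathfrak J(n)$. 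For $R=T_\Q$ the induced relation on $\underline n$ is the cyclic order; an automorphism of a finite cyclic order is determined by the image of $1$ (and must then be a rotation of the cycle), so the automorphism group is $\mathfrak T(n)$. Finally for $R=D_\Q$ the induced structure is the circular betweenness on $\{1,\dots,n\}$, whose automorphisms are the compositions of rotations with the reflection (i.e.\ the dihedral group), yielding $\mathfrak D(n)$.

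The only point that needs a little care is checking in cases (c), (d), (e) that no further permutation sneaks in. In each case the check is made by noting that the relation in question determines enough of the underlying order: $b_C$ on $\underline n$ determines the pair of extremes $\{1,n\}$ (they are never the middle coordinate of a triple), and from there each element's position is determined up to the global reversal; $t_C$ on $\underline n$ determines the cyclic successor relation (the unique $k$ with $(i,k,j) \in t_C$ for every $j \notin \{i,k\}$ being the cyclic successor of $i$, suitably formalised); and $d_C$ on $\underline n$ determines the cyclic successor relation up to reversal. Carrying out these three small combinatorial arguments is the only non-trivial piece of work; everything else is immediate unpacking of definitions.

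I do not foresee a real obstacle here: the lemma is essentially a translation between the global geometry of $\Q$ and the finite induced structure, and the five computations are independent and short. The slightly subtle step is the circular case (d), where one must verify that a finite circular order has exactly $n$ automorphisms rather than $2n$; this is ensured by the \emph{oriented} definition of $t_C$ (only circular permutations $\sigma$ of $\{1,2,3\}$, not arbitrary ones, are allowed), which distinguishes the cyclic order from its betweenness reduct $d_C$.
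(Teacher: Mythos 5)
Your verification is correct and is exactly the intended argument: the paper gives no proof at all (it introduces the lemma with ``clearly''), and the content is precisely the routine computation of $Aut(R_{\restriction A})$ for an $n$-element $A\subseteq \Q$ in each of the five cases, transported to $\underline n$ via the order isomorphism $c_A$. You also correctly flag the one genuinely delicate point — that the oriented ternary relation $t_C$ admits only the $n$ rotations and not the reflections, unlike its $4$-ary reduct $d_C$ — so nothing is missing.
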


\subsubsection{Bichains and their indicative sequences}
As before let $R$ be a  chainable relational structure with domain $E$,  and $C=(E,\leq)$ be a chain chaining $R$. We may observe that for every  embedding $\varphi$ of $R$ into $R$,   the inverse image of $\leq$ by $\varphi$ again provides a chain chaining $R$.  Frasnay  \cite{frasnay 65}  studied the relationship between two chains  chaining the same structure, and we briefly  recall some elements of his theory (for more,  see \cite {frasnay 65}, \cite{frasnay 84}, and Fra\"{\i}ss\'e \cite{fraisse}). 
  
A \emph{bichain} is a relational structure with two linear orders on the same set. To each bichain we associate a sequence of permutations groups, called the \emph{indicative sequence} of the bichain.  Consider a  bichain $B=(E, \leq_0, \leq_1)$, and set each component as  $B_i= (E, \leq_i)$ for $i=0, 1$. Let $n$ be a positive integer no larger than the cardinality of $E$, and let $A$ be an $n$-element subset of $E$. The chains $B_{0}{\restriction A}$ and $B_{1}{\restriction A}$ are isomorphic via  a unique permutation $h$ of $A$ which transforms the first to the second; if we order $A$ into the sequence $a_1<_0 \dots <_0a_n$, there is a unique permutation $\sigma$ of $\underline {n}= \{1, \dots n\}$ which reorders it into $a_{\sigma(1)}<_1 \dots <_1 a_{\sigma(n)}$,  that is satisfies $h(a_k)= a_{\sigma (k)}$ for $k\in \{1, \dots, n\}$. The collection of these permutations $\sigma$ for $n$ fixed, $A$ belonging to all the $n$-element subsets of $E$,  generates a subgroup $Ind_n(B)$ of $\mathfrak S(n)$, called the \emph{$n^{th}$ indicative group of $B$}. The sequence of these indicative groups is the \emph{indicative sequence} of $B$. We can now recall the following result of Frasnay (\cite {frasnay 84}  Lemme, page 263).

\begin{theorem}\label{thm:frasnay84} \cite {frasnay 84}  Let $B=(E, \leq_0, \leq_1)$ be a bichain. If $B_0$ has no minimum and no maximum then the indicative sequence of $B$ is one of the five sequences $\mathfrak S$, $\mathfrak I$, $\mathfrak J$, $\mathfrak T$, $\mathfrak D$ listed above. 
\end{theorem}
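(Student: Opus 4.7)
The strategy uses Ramsey's theorem to find a homogeneous sub-bichain and the compatibility of indicative permutations under single-element deletion to force the indicative sequence of $B$ into one of the five canonical types.

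For each $n$, the map $A\mapsto \sigma_A$ is an $n!$-coloring of the $n$-element subsets of $E$. Applying Ramsey's theorem in each arity and taking a diagonal intersection yields an infinite $F\subseteq E$ such that $\sigma_A$ depends only on $|A|$, taking the common value $\tau_n$. Using the no minimum / no maximum hypothesis on $(E,\leq_0)$, we may moreover ensure $F$ is both cofinal and coinitial in $E$ relative to $\leq_0$. The sequence $(\tau_n)_n$ satisfies a \emph{deletion compatibility}: removing any element from an $(n+1)$-subset of $F$ in the $\leq_0$-order and renormalizing must yield $\tau_n$. A combinatorial case analysis on how a permutation of $\underline{n+1}$ can project onto a permutation of $\underline{n}$ under each of the $n+1$ possible deletions forces $(\tau_n)_n$ to be the sequence associated with one of the five canonical bichains on $\Q$ induced by $(\Q,=)$, $(\Q,\leq)$, $B_\Q$, $T_\Q$, $D_\Q$; by Lemma \ref{lem:groupseq} the indicative sequence of $B\restriction F$ is accordingly $\mathfrak S$, $\mathfrak I$, $\mathfrak J$, $\mathfrak T$, or $\mathfrak D$. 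Call its type $\mathfrak X$.

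To promote the classification from $F$ to $E$, take any $n$-element $A\subseteq E$. Cofinality and coinitiality of $F$ let us enlarge $A$ to $A'=A\cup G$ with $G\subseteq F$ placed strictly above and strictly below $A$ in $\leq_0$. Applying deletion compatibility of $B$ to $A'$ while comparing with the known $\tau_m$-values on the $F$-rich sub-subsets of $A'$ forces $\sigma_A\in \mathfrak X(n)$, whence $Ind_n(B)\subseteq \mathfrak X(n)$. The reverse inclusion $\mathfrak X(n)\subseteq Ind_n(B)$ follows from $Ind_n(B\restriction F)\subseteq Ind_n(B)$ when $\mathfrak X$ is cyclic (types $\mathfrak I$, $\mathfrak J$, $\mathfrak T$); for $\mathfrak X=\mathfrak D$ or $\mathfrak S$ a similar sandwich argument with more elements of $G$ produces the extra generators needed.

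The main obstacle is the combinatorial classification of deletion-compatible sequences of permutations: proving that the five listed solutions exhaust the possibilities requires a delicate induction on $n$, since the compatibility constraint relates $\tau_{n+1}$ to $\tau_n$ through each of the $n+1$ single-element deletions. While this fact is forced conceptually by Cameron's classification of monomorphic closed groups (Theorem \ref{thm:cameronthm}), a direct combinatorial proof is non-trivial. The no minimum / no maximum hypothesis enters in the extension step: without it, boundary elements of $(E,\leq_0)$ could introduce indicative permutations outside $\mathfrak X(n)$, spoiling the uniformity of the classification across all $n$.
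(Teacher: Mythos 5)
First, a point of comparison: the paper does not prove this statement at all --- it is imported verbatim from Frasnay (\cite{frasnay 84}, Lemme, p.~263) --- so your attempt has to stand on its own, and it does not.

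The decisive gap is the sentence ``we may moreover ensure $F$ is both cofinal and coinitial in $E$ relative to $\leq_0$''. Ramsey's theorem gives no control over where the homogeneous set sits in $(E,\leq_0)$, and the claim is false. Take $E=\Z$ with $\leq_0$ the usual order, and let $\leq_1$ agree with $\leq_0$ on the negatives, be its reverse on the non-negatives, and put every negative $\leq_1$-below every non-negative. Then $B_0$ has no endpoints, but any infinite set containing two negatives and two non-negatives is already inhomogeneous for the pair colouring, so every Ramsey-homogeneous $F$ lies (up to finitely many points) in one half and is never both cofinal and coinitial. The same example kills the promotion step and shows that its intended conclusion $Ind_n(B)\subseteq \mathfrak X(n)$ is simply wrong: the homogeneous set of negatives has type $\mathfrak I$, yet $Ind_n(B)=\mathfrak S(n)$ for all $n\geq 2$. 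What you are missing is that $Ind_n(B)$ is the group \emph{generated} by the $\sigma_A$ over \emph{all} $n$-subsets $A$ of $E$; it can be strictly larger than the indicative group of any homogeneous subset, and Frasnay's theorem is precisely the assertion that this generated group sequence still lands on one of the five possibilities. Reducing to a single chainable subset discards exactly the permutations the theorem is about.

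Relatedly, the no-endpoints hypothesis does not play the role you assign it. Its real function is to let a ``local defect'' of $\leq_1$ be slid into every position of an $n$-set: for $\leq_1$ obtained from $\leq_0$ on $\Z$ by swapping $0$ and $1$, the absence of a minimum puts every adjacent transposition $(k,k+1)$ into $Ind_n(B)$, which then generates $\mathfrak S(n)$, whereas the same modification on $\omega$ yields $Ind_n(B)=\langle(1\,2)\rangle$, a group not on the list. A correct proof must exploit this positional homogeneity at the level of the generated groups, together with the restriction-compatibility of the sequence $(Ind_n(B))_n$ and a classification of such group sequences; your sketch defers that classification (which is indeed the hard combinatorial core) but applies it to the wrong object.
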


This together with   Lemma \ref{lem:groupseq} yields the following.

\begin{corollary}\label{cor:frasnay}
The indicative sequence of a bichain whose components have no extreme elements is the group-sequence of a homogeneous monomorphic countable structure. 
\end{corollary}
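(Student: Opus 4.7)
The plan is to derive this corollary by chaining together the two results immediately preceding it, with only a small bookkeeping step to check that the five candidate structures have the desired properties. Let $B=(E,\leq_0,\leq_1)$ be a bichain whose two components have no extreme elements. In particular the component $B_0$ has neither minimum nor maximum, so the hypothesis of Theorem \ref{thm:frasnay84} is met, and therefore the indicative sequence of $B$ is one of the five canonical sequences $\mathfrak{S}$, $\mathfrak{I}$, $\mathfrak{J}$, $\mathfrak{T}$, $\mathfrak{D}$.

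Next, by Lemma \ref{lem:groupseq}, each of these five sequences is literally the group-sequence $(\mathrm{Ind}_n(R,\mathbb{Q}))_{n\in\mathbb{N}}$ for one of the five concrete structures $R\in\{(\mathbb{Q},=),(\mathbb{Q},\leq),B_{\mathbb{Q}},T_{\mathbb{Q}},D_{\mathbb{Q}}\}$, viewed as living on the countable set of rationals and chained by $(\mathbb{Q},\leq)$. So once the correct item in Frasnay's list of five possibilities is identified, Lemma \ref{lem:groupseq} hands us a specific structure $R$ whose group-sequence (relative to $\mathbb{Q}$) equals the indicative sequence of $B$.

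To finish, I would only need to verify that each of these five $R$'s is countable, monomorphic, and homogeneous. Countability is immediate. Monomorphy follows because each $R$ is chained by $(\mathbb{Q},\leq)$, and chainable structures are monomorphic (recorded just before Theorem \ref{freely rational}). Homogeneity of $B_{\mathbb{Q}}$, $T_{\mathbb{Q}}$, $D_{\mathbb{Q}}$ is noted explicitly just after their definitions in Section \ref{monomorphic structure}, while homogeneity of $(\mathbb{Q},=)$ and $(\mathbb{Q},\leq)$ is classical.

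There is essentially no obstacle here; both inputs we need (Frasnay's classification and the explicit identification of the five sequences with concrete structures on the rationals) are stated directly above, and the corollary is a transparent packaging of the observation that every bichain over chains without extreme elements is ``coded'' by one of the five canonical homogeneous monomorphic structures on $\mathbb{Q}$.
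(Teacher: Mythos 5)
Your proposal is correct and is exactly the paper's argument: the paper derives the corollary by combining Theorem \ref{thm:frasnay84} with Lemma \ref{lem:groupseq}, and the remaining verifications (countability, monomorphy via chainability by $(\Q,\leq)$, and homogeneity of the five structures over the rationals) are precisely the facts recorded just above in the same section. Nothing is missing.
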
 

Recall that a chain is \emph{scattered} if it does not embed the chain of the rationals.

\begin{lemma}\label{lem:scatteredbichain}  Let $B=(E, \leq_0, \leq_1)$ be a bichain such that $B_0$ is non-scattered and $B_1$ is scattered.
Then the indicative sequence of $B$ is $\mathfrak S$. 
\end{lemma}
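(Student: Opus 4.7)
The plan is to reduce to the case where the first component of the bichain is isomorphic to the chain of rationals, apply Theorem~\ref{thm:frasnay84}, and then use scatteredness of $B_1$ to eliminate every possibility for the indicative sequence except $\mathfrak S$.

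Since $B_0$ is non-scattered, first choose a subset $D\subseteq E$ with $(D,\leq_0\restriction D)\cong(\Q,\leq)$; in particular this restriction has no minimum and no maximum, so Theorem~\ref{thm:frasnay84} applies to the induced subbichain $B'=(D,\leq_0\restriction D,\leq_1\restriction D)$ and its indicative sequence is one of $\mathfrak S,\mathfrak I,\mathfrak J,\mathfrak T,\mathfrak D$. Since every $n$-element subset of $D$ is also an $n$-element subset of $E$, we have $\mathrm{Ind}_n(B')\subseteq \mathrm{Ind}_n(B)\subseteq \mathfrak S(n)$ for every $n$, so it suffices to prove that the indicative sequence of $B'$ is $\mathfrak S$.

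The next step is to rule out the other four possibilities by showing that each would force $\leq_1\restriction D$ to embed $(\Q,\leq)$, contradicting the fact that $B_1\restriction D$, a subchain of the scattered chain $B_1$, is scattered. If it is $\mathfrak I$, the only local permutation is the identity and $\leq_1\restriction D=\leq_0\restriction D\cong\Q$. If it is $\mathfrak J$, every 3-element subset induces the identity or the reversal of $\leq_0$; a sign-propagation argument, using the density of $(D,\leq_0\restriction D)$ to connect any two pairs by a finite chain of overlapping triples, forces $\leq_1\restriction D$ to be globally $\leq_0\restriction D$ or its reverse, each a copy of $\Q$. If it is $\mathfrak T$ or $\mathfrak D$, the local permutations are cyclic rotations (possibly composed with a reversal), which means the circular order associated with $\leq_1\restriction D$ coincides, up to orientation, with that of $\leq_0\restriction D$; every linearization of the circular order of $(\Q,\leq)$ has the form $\{x\in D:x\geq_0 a\}+\{x\in D:x<_0 a\}$ (or its reverse) for some $a$ in the Dedekind completion of $(D,\leq_0\restriction D)$, and at least one summand of such a decomposition is isomorphic to $\Q$.

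The main obstacle lies in the $\mathfrak T$ and $\mathfrak D$ cases, where one must classify precisely which linear orders on $D$ induce the circular order of $(\Q,\leq)$ and verify that each embeds $\Q$; the $\mathfrak I$ and $\mathfrak J$ cases reduce to short local arguments driven by the connectedness of $\Q$ as a dense chain.
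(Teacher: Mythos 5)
Your proposal is correct, but it takes a genuinely different route from the paper. The paper first applies Galvin's partition theorem for pairs of rationals to extract a set $A$ on which $B_0\restriction A\cong\Q$ and $B_1\restriction A\cong\omega$ or $\omega^*$ (Claim \ref{claim:rationals}), and then realizes \emph{every} permutation of $\underline n$ directly by an induction that exploits the finiteness of initial segments of $\omega$; Frasnay's theorem is not used there at all. You instead pass to a copy $D$ of $\Q$ inside $B_0$, invoke Theorem \ref{thm:frasnay84} to know that the indicative sequence of the induced sub-bichain is one of the five canonical ones, and eliminate $\mathfrak I$, $\mathfrak J$, $\mathfrak T$, $\mathfrak D$ by showing each would force $(D,\leq_1)$ to contain a copy of $\Q$, contradicting scatteredness of $B_1$; the inclusion $Ind_n(B')\subseteq Ind_n(B)$ then finishes the argument. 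Your elimination steps are all sound: the generators of a trivial (resp.\ two-element) indicative group must all be trivial (resp.\ in $\{id,r\}$), the sign-propagation through overlapping triples works because any two pairs in $D$ are linked by pairs sharing a point, and every linear order inducing the circular order of $\Q$ (or its reverse) is of the form ``final segment $+$ initial segment'' and hence has a summand embedding $\Q$. One point you should make explicit when writing this up: since $\mathfrak D(3)=\mathfrak S(3)$, triples give no information in the $\mathfrak D$ case, and you must argue from $4$-element subsets (preservation of the relation $d_C$), from which the circular order is recovered up to orientation. Your approach trades Galvin's Ramsey-theoretic input for Frasnay's classification (already a declared tool of the paper) plus some elementary but fiddly bookkeeping about linearizations of cyclic orders; the paper's approach is more constructive, exhibiting a witness for each permutation, whereas yours is a proof by exclusion.
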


\begin{proof} We will make use of the following.

\begin{claim}\label{claim:rationals}
There is a subset $A$ of $E$ such that $B_0 \restriction A$ is isomorphic to the chain of rational numbers, and $B_1 \restriction A$ is isomorphic to either $\omega$ or $\omega^*$.
\end{claim}

\noindent{\bf Proof of Claim \ref{claim:rationals}.}
This readily follows from a famous unpublished result of Galvin, 
expressing that if the pairs of rational numbers are divided into finitely many classes, then  there is a subset of the rationals which is isomorphic to the rationals and such that all pairs are contained in the union of at most two classes; for a proof see Todorcevic \cite{todorcevic} (Theorem 6.3  Page 44), or Vuksanovic \cite{vuksanovic}.

Indeed, pick a subset $E'$ of $E$ such that $B_0 \restriction E'$ is isomorphic to the rationals, and let $\leq_2$ be an ordering of $E'$ in type $\omega$. Then distribute the pairs $(x,y)$  of $E'$ with $x <_0y$ into four classes according to how $x$ and $y$ compare with $\leq_1$ and $\leq_2$. Galvin's theorem yields a subset $A$ of $E'$ such that $B_0 \restriction A$  is isomorphic to the rationals, and $B_1 \restriction A$ either agrees with $\leq_2$ or its reverse, hence either of type $\omega$ or $\omega^*$.

\hfill $\Box$ 

We can thus assume that  $B=(E, \leq_0, \leq_1)$ is a bichain where $B_0$ is isomorphic to the rationals and $B_1$ is  isomorphic to $\omega$.  Under this assumption we have the following.

\begin{claim}\label{claim:allpermutations} 
For each integer $n$, the set of permutations $\sigma$ of $\underline{n}$ which reorders an  $n$-element  ordered set $a_1<_0 a_2<_0 \cdots <_0 a_n$ of $A$ into  $a_{\sigma(1)} <_1 a_{\sigma(2)} <_1 \cdots <_1 a_{\sigma(n)}$ is the full symmetric group ${\mathfrak S}(n)$.
\end{claim}

With these claims it follows that the indicative sequence of (the original) $B$ is $\mathfrak S$ as required. 

\medskip

\noindent{\bf Proof of Claim \ref{claim:allpermutations}}
We proceed by induction on $n$. Let $\sigma \in {\mathfrak S}(n)$  and let $i=\sigma(n)$. It suffices to consider the case $1<i<n$, and by induction we may find an  $n-1$-element ordered set $a_1<_0 a_2 <_0 \cdots <_0 a_{i-1} <_0 a_{i+1} <_0 \cdots <_0  a_{n}$ of $A$ such that $a_{\sigma(1)} <_1 a_{\sigma(2)} <_1 \cdots <_1 a_{\sigma(n-1)}$. Since the interval $(a_{i-1}, a_{i+1})$ in $B_0$ is infinite and there only finitely many elements less than $a_{\sigma(n-1)}$ in $B_1$, we may find $a_i$ such that $a_{i-1}<_0 a_i<_0 a_{i+1}$ and $a_{\sigma(n-1)} <_1 a_i$. Then  $\sigma$ reorders this $n$-element set as required for the claim.
\hfill $\Box$ 

\medskip
This completes the proof of Lemma \ref{lem:scatteredbichain}.
\end{proof}

\medskip

From this, we can deduce the following which is key to our structural result.

\begin{theorem}\label{thm:siblingmonomorphic} Let $G$ be a monomorphic closed group on a countable set. Then all  relational structures $R$  such that $Aut(R)= G$ have the same number of siblings: this number is $1$ if $G$ is the full symmetric group,  and $2^{\aleph_{0}}$ otherwise. 
\end{theorem}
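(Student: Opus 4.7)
The plan is to use Cameron's classification (Theorem \ref{thm:cameronthm}) to split by the conjugacy class of $G$. If $G = \mathfrak S(\Q)$, every bijection of the base is an automorphism of $R$, so each relation of $R$ is invariant under all permutations and hence determined by its arity alone (constantly true or constantly false on tuples of distinct elements). Any two countable structures with such trivial relations on the same signature are isomorphic, so $\sib(R) = 1$.

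Otherwise $G$ is one of the four other groups in Cameron's theorem, and Theorem \ref{freely rational} yields that $R$ is chainable by a chain isomorphic to $\Q$. After identifying the base of $R$ with $\Q$, the group-sequence $\mathfrak G$ of $(R,\Q)$ lies in $\{\mathfrak I, \mathfrak J, \mathfrak T, \mathfrak D\}$ by Lemma \ref{lem:groupseq}. Let $P$ be the free interpretation operator with $P(\Q) = R$, and for each countable non-scattered chain $C$ without endpoints on $\Q$, define $R_C := P(C)$. Since $C$ is countable and non-scattered, there exist order-preserving embeddings $C \hookrightarrow \Q$ and $\Q \hookrightarrow C$, which $P$ lifts to structural embeddings between $R_C$ and $R$; hence each $R_C$ is a sibling of $R$.

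The core step is to show that $C \mapsto R_C$ produces $2^{\aleph_0}$ isomorphism types of siblings. Suppose $\phi : R_C \to R_{C'}$ is an isomorphism; then $\phi(C)$ and $C'$ are both chains chaining $R_{C'}$. For the bichain $B = (\phi(C), C')$, equating the two descriptions of $\Aut(R_{\restriction A})$ for each $n$-set $A$ forces each indicative permutation $\pi_A$ of $B$ to normalize $\mathfrak G(n)$, while the chaining condition forces $\pi_B \pi_A^{-1} \in \mathfrak G(n)$ for all $A, B$ of size $n$, so that all $\pi_A$ lie in a single coset of $\mathfrak G(n)$ inside its normalizer. Both components of $B$ are non-scattered and without endpoints, so Theorem \ref{thm:frasnay84} restricts $Ind(B)$ to one of the five standard sequences $\mathfrak S, \mathfrak I, \mathfrak J, \mathfrak T, \mathfrak D$; Lemma \ref{lem:scatteredbichain} plays a complementary role by ruling out scattered/non-scattered mismatches that could otherwise broaden the picture. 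Combining these constraints, $Ind(B)$ takes only one of four possible forms whose chain-level translation says $\phi(C)$ is obtained from $C'$ by the identity, reversal, a cyclic shift, or a cyclic shift composed with reversal, according to $\mathfrak G$. Each resulting equivalence class of countable non-scattered chains without endpoints is at most countable, so the $2^{\aleph_0}$ isomorphism types of such chains partition into $2^{\aleph_0}$ isomorphism types of $R_C$. Combined with the trivial upper bound, $\sib(R) = 2^{\aleph_0}$. Since this value depends only on $\mathfrak G$, which in turn is determined by the isomorphism class of $G$ as a permutation group, all $R$ with $\Aut(R) = G$ share the same number of siblings.

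The main obstacle is the case-by-case verification that, for each $\mathfrak G \in \{\mathfrak I, \mathfrak J, \mathfrak T, \mathfrak D\}$, the permitted indicative sequences of the bichain correspond exactly to the claimed chain-level operations and no more. This requires computing the normalizers $N(\mathfrak G(n))$ and intersecting with the five standard sequences, then decoding each resulting group-level relation into a concrete chain-level relation. The subsequent counting is classical: countable non-scattered chains without endpoints form a class of size $2^{\aleph_0}$ even after quotienting by reversal and cyclic shift, both of which have countable orbits.
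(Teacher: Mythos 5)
Your overall architecture is genuinely different from the paper's: you generate siblings as free-operator images $P(C)$ of varying chains $C$ on the base, whereas the paper generates them as restrictions $R_{\restriction C_s}$ to subsets $C_s\subseteq \Q$ and reduces the whole count to the canonical homogeneous structure $M$ of Cameron's list via the claim that any isomorphism between restrictions of $R$ (to sets without extreme elements) is already a local isomorphism of $M$. Your ``core step'' (two chains chaining the same $R_{C'}$ have all indicative permutations in a single coset of $\mathfrak G(n)$, then Frasnay's Theorem \ref{thm:frasnay84} pins the indicative sequence to one of the five and a case analysis decodes this into identity/reversal/rotation/reversed rotation) is essentially a reworking of the paper's Subclaim and is viable, though you only assert the decoding; I will grant that part.

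The genuine gap is in the final counting. You conclude that there are $2^{\aleph_0}$ isomorphism types of $R_C$ because the equivalence ``related by reversal or cyclic shift'' has countable classes on the set of countable non-scattered endpoint-free chains. That is not justified and is false as a general principle: a cyclic shift of $C'$ is $F+I$ for a Dedekind cut $(I,F)$ of $C'$, and a countable \emph{non-scattered} chain has $2^{\aleph_0}$ Dedekind cuts (its completion contains a copy of $\R$), so the shift orbit is a priori of size continuum, and nothing you say bounds the number of isomorphism types it contains. Once classes may have size $2^{\aleph_0}$, the inference ``$2^{\aleph_0}$ chains, small classes, hence $2^{\aleph_0}$ classes'' collapses — in principle you could even have a single class. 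The repair is exactly what the paper does: instead of quotienting the whole class of chains, exhibit an explicit family of $2^{\aleph_0}$ chains for which the equivalence is visibly trivial, namely $C_s=A_0+A_1^s+A_2$ with $A_0,A_2$ dense endpoint-free pieces and $A_1^s=\sum_{n<\omega}C_n^{s(n)}$ with $C_n^{s(n)}\in\{\omega,\omega^*\}$; the rigid scattered middle forces any isomorphism (or reversal, or admissible shift) between $C_s$ and $C_{s'}$ to match $A_1^s$ with $A_1^{s'}$, whence $s=s'$. With such a family substituted for your arbitrary non-scattered chains, your route goes through. A minor further point: rather than arguing that $\sib(R)$ ``depends only on $\mathfrak G$,'' it is cleaner (and avoids having to show that $\mathfrak G$ is determined by $G$) to observe that your construction and counting apply verbatim to every $R$ with $\Aut(R)=G$, using only $\Aut(R)\neq\mathfrak S(\Q)$ to rule out $Ind_n(R,\Q)=\mathfrak S(n)$ for large $n$.
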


\begin{proof}
  Suppose  first that  $R$ is one of the the five  previously listed homogeneous relational structures defined on $\Q$. If $R$ is the equality relation, there is just one sibling. If $R$ is one of the four others, we prove that there are $2^{\aleph_0}$ non isomorphic siblings. For that, we define subsets  $C_s$ of $\Q$ for each $s\in \{0,1\}^{\N}$ such that the restrictions $R_{\restriction C_s}$ are equimorphic to $R$ and pairwise non isomorphic. The structure of the $C_s$'s is such that an isomorphism of some $R_{\restriction C_s}$ onto some $R_{\restriction C_{s'}}$ will necessarily be an isomorphism from the chain $C_s$ onto the chain $C_s'$ and hence $s=s'$.  Each $C_s$ is the union of  three sets  $A_0$, $A_1^s$ and $A_2$, where   $A_0$ and $A_2$ are respectively a non-empty  initial and final segment of $\Q$ without a largest element and a least element, and $A_1^s$ is a scattered chain of the form $\sum_{n<\omega} C_n^{s(n)}$, where $C_n^{s(n)}$ is a chain of order type $\omega$, resp. $\omega^{*}$,  if $s(n)=0$, resp. $s(n)=1$. It can be easily verified  that distinct sequences provide non-isomorphic chains.  But now if $R$ is any of the other four homogeneous structures, then each structure $R{\restriction C_s}$ is a sibling of $R$, and   an isomorphism from $R{\restriction C_s}$ onto $R{\restriction C_{s'}}$  has to be an order isomorphism from $A_1^s$ onto $A_1^{s'}$ or its reverse; the first case happens only if $s=s'$ while  the second case never happens, due to the form of $A_1^s$ and $A_1^{s'}$. Hence $\sib(R)= 2^{\aleph_0}$ as required.
   
\medskip

Next we deal with the general case. According to Theorem \ref{freely rational} we may suppose that $R$ is chainable by the chain $C= (\Q, \leq)$ of rational numbers and furthermore that $G= Aut (M)$ for some of the homogeneous relations  occuring in Cameron's theorem; we show that $\sib(R)=\sib(M)=2^{\aleph_0}$.  If  the restrictions of $M$ to two subsets $A$ and $A'$  
 are isomorphic,  then the restrictions $R_{\restriction A}$ and $R_{\restriction A'}$ are isomorphic. From this, and the fact that the embeddings of $R$  coincide with the embeddings of $M$,  it  follows that $\sib(R) \leq \sib (M)$.  

Conversely, suppose that $R_{\restriction A}$ and  $R_{\restriction A'}$ are isomorphic. It suffices to prove the following.

 \noindent \begin{claim}\label{claim:iso} Every  isomorphism $f$ of $R_{\restriction A}$ onto $R_{\restriction A'}$ is a local isomorphism of $M$ provided that $A$, as a subset of $\Q$,  has no extreme elements. 
\end{claim}

\medskip

Indeed to conclude the proof using the claim, if we take $2^{\aleph_0}$ subsets $C_s$ of $\Q$ with no extreme elements such that  their restrictions to $M$ are pairwise non isomorphic and equimorphic to $M$, as we did above, then by the claim the restrictions of  $R$ to these will also yield pairwise non isomorphic and equimorphic structures to $R$,  and hence $2^{\aleph_0}=\sib(M)\leq\sib(R)$.

\medskip

Now toward proving the claim, consider the $n$-th indicative group $Ind_n(B)$ associated to the bichain $B=(A,\leq_A,\leq_A')$, where $\leq_A'$ is the  image of $\leq_{A'}$ by $f^{-1}$, and also consider the group-sequences $Ind_n(M, \Q)$ and  $Ind_n (R, \Q)$. In order to prove the claim it suffices to prove the following:

\noindent \begin{subclaim}  
$Ind_n(B) \subseteq  Ind_n(M, \Q)$ for each integer $n$. 
\end{subclaim}

Indeed, let $A_n$ be an $n$-element subset of $A$, let $\sigma$ be the permutation of $\{1, \dots n\}$ such that if $a_1<_A\dots <_Aa_n$ is an enumeration of $A_n$, then the sequence $a'_1<_{A'}\dots <_{A'}a'_n$ with $a'_i= f(a_{\sigma(i)})$ provides an enumeration of $A'_n=f(A_n)$. Then by definition $\sigma\in Ind_n(B)$, and thus  if the subclaim holds we have $\sigma\in   Ind_n(M, \Q)$, and hence $\sigma^{-1}$  as well.  Now let $t$ be the unique order-isomorphism from $f(A_n)$ onto $A_n$  and define $g=t\circ f_{\restriction A_n}$; this map is represented  on $\{1, \dots n\}$ by  $\sigma^{-1}$ hence it is an automorphism of $M_{\restriction A_n}$. It follows that $f$ induces an isomorphism from $M_{\restriction A_n}$ onto $M_{\restriction A'_n}$ from which follows that $f$ is a local isomorphism of $M$. 

\medskip

\noindent {\bf Proof of the  subclaim.}  

We have easily $Ind_n(B)\subseteq Ind_n(R, \Q)$.  According to Frasnay's Theorem \ref{thm:frasnay84} above, $(Ind_n(B))_n$ is the group-sequence of some homogeneous structure  belonging to the Cameron list, and thus let $M'$ be such a structure with domain $\Q$. We have $Ind_n(M', \Q))\subseteq Ind_n(R,\ Q)$, which implies $ Aut (M') \subseteq Aut (R)$. But now since $Aut(R)= Aut(M)= G$  the subclaim follows. 

\medskip

This completes the proof of Theorem \ref{thm:siblingmonomorphic}.
\end{proof}

\medskip

\subsubsection{Proof of Theorem \ref{thm:sibrational}}
Let $R$ be monomorphic and uniformly prehomogeneous  such that  $Aut(R)$ is not the symmetric group. Then, according to Theorem \ref{freely rational},    $Aut (R) $ is monomorphic, and thus $\sib(R)= 2^{\aleph_{0}}$  by Theorem \ref{thm:siblingmonomorphic}.

\section{Monomorphic decomposition of a relational structure}\label{monomorphic decomposition}

In this section, we extend some notions of the previous section bringing the  concept of monomorphic decomposition of a relational structure into play, and we specialize it to  permutation groups. This notion was introduced in \cite{pouzet-thiery} and will form a main tool in this work. Our presentation  follows \cite{oudrar-pouzet}, see Chapter 7 of \cite{oudrar} for details.                                     

Let $R$ be a relational structure on a set $E$. A subset $E'$ of $E$  is a \emph{monomorphic part} of $R$ (or a monomorphic block) if for every integer $k$ and every pair $A,~A'$ of $k$-element subsets of $E$, the induced structures on $A$ and $A'$ are isomorphic whenever $A\setminus E'=A'\setminus E'$ (we do not require that an isomorphism of $A$ onto $A'$ sends $A\setminus E'$ onto $A'\setminus E'$). A \emph{monomorphic decomposition} of $R$ is a partition of $E$ into monomorphic parts. A monomorphic part which is maximal for inclusion is called a \emph{monomorphic component} of $R$, and together form a  monomorphic decomposition of $ R$ of which  every monomorphic decomposition of $R$ is a refinement (Proposition 2.12 of  \cite{pouzet-thiery}).

This partition can also be defined  in a direct way as follows,  see \cite{sikaddour-pouzet}. For $x$ and  $y$  two elements of $E$ and  $F$  a finite subset of $E\setminus \{x, y\}$, we say that $x$ and  $y$  are \emph{$F$-equivalent}, written  $x \simeq_{F, R} y$,  if   the restrictions of $R$ to   $\{x\}\cup F$  and $\{y\}\cup F$ are isomorphic (we do not require that an isomorphism of $\{x\}\cup F$ onto $\{y\}\cup F$ sends $x$ to $y$). For  $k$ a non-negative integer, we set $x \simeq_{k, R} y$ if $x \simeq_{F, R} y$ for every $k$-element  subset $F$ of   $E\setminus \{x,y\}$.  
We set $x\simeq_{\leq k, R}y$ if $x\simeq_{k', R}y$ for every $k'\leq k$ and $x\simeq_{R}y$ if $x \simeq_{F, R} y$ for every finite set $F$.  The  following property holds (\cite{oudrar-pouzet}; for a proof, see Lemma 7.48 and Lemma 7.49 in Section  7.2.5 of \cite{oudrar}). 
\begin{lemma} \label{lem:equiv} The relations $\simeq_{k, R}$, $\simeq_{\leq k, R}$  and $\simeq_R$ are equivalence relations on $E$. Furthermore, the equivalence classes of $\simeq_R$ are the components  of $R$. 
\end{lemma}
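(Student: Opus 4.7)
My plan is to verify reflexivity, symmetry, and transitivity for each of the three relations, and then identify the classes of $\simeq_R$ with the monomorphic components. Reflexivity and symmetry of $\simeq_{F,R}$ follow from the symmetric role of $x$ and $y$ in its definition, so $\simeq_{k,R}$, $\simeq_{\leq k,R}$, and $\simeq_R$ are all reflexive and symmetric. The crux is transitivity of $\simeq_{k,R}$; once this is established, $\simeq_{\leq k, R} = \bigcap_{k'\leq k}\simeq_{k',R}$ and $\simeq_R = \bigcap_{k\in \N}\simeq_{k,R}$ are equivalence relations as intersections of equivalence relations.

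To prove transitivity of $\simeq_{k,R}$, I fix pairwise distinct $x,y,z$ with $x\simeq_{k,R} y$ and $y\simeq_{k,R} z$, and an arbitrary $k$-element $F\subseteq E\setminus\{x,z\}$, aiming for $R_{\restriction F\cup\{x\}}\cong R_{\restriction F\cup\{z\}}$. If $y\notin F$, the two hypotheses applied to $F$ chain directly. If $y\in F$, set $F':=F\setminus\{y\}$; then $F'\cup\{x\}$ is a $k$-element subset of $E\setminus\{y,z\}$ and $F'\cup\{z\}$ is a $k$-element subset of $E\setminus\{x,y\}$, so the two hypotheses respectively yield
\[
R_{\restriction F\cup\{x\}}=R_{\restriction F'\cup\{x,y\}}\cong R_{\restriction F'\cup\{x,z\}}\cong R_{\restriction F'\cup\{y,z\}}=R_{\restriction F\cup\{z\}}.
\]

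For the second assertion, I show each $\simeq_R$-class $C$ is a maximal monomorphic part. First, $C$ is a monomorphic part: given $A,A'\subseteq E$ of the same finite size with $A\setminus C = A'\setminus C$, induct on $n:=|(A\cap C)\triangle (A'\cap C)|$. If $n>0$, pick $x\in (A\cap C)\setminus(A'\cap C)$ and $y\in (A'\cap C)\setminus(A\cap C)$; the set $F:=A\setminus\{x\}$ lies in $E\setminus\{x,y\}$ (since $y\in C$ but $y\notin A\cap C$, and $y\notin A\setminus C=A'\setminus C$, so $y\notin A$), whence $x\simeq_R y$ gives $R_{\restriction A}\cong R_{\restriction (A\setminus\{x\})\cup\{y\}}$ and the symmetric difference drops by two. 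For maximality, suppose a monomorphic part $D$ properly contains $C$ and pick $c\in C$, $d\in D\setminus C$. For any finite $F\subseteq E\setminus\{c,d\}$, the sets $F\cup\{c\}$ and $F\cup\{d\}$ have the same cardinality and $(F\cup\{c\})\setminus D = F\setminus D = (F\cup\{d\})\setminus D$ because $c,d\in D$; monomorphy of $D$ then yields $R_{\restriction F\cup\{c\}}\cong R_{\restriction F\cup\{d\}}$, so $c\simeq_R d$, contradicting $d\notin C$.

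The main technical obstacle is the transitivity step when the ``middle'' element $y$ belongs to $F$, since one cannot then invoke $x\simeq_{F,R} y$ directly. The fix is the substitution device of trading $y$ for $z$ in one test set and $y$ for $x$ in the other; the rest is bookkeeping built on the resulting freedom to swap $\simeq_R$-equivalent elements one at a time inside a finite subset of $E$.
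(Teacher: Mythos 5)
Your proof is correct. Note that the paper does not actually prove this lemma: it cites Lemmas 7.48 and 7.49 of Oudrar's thesis, so there is no in-paper argument to compare against; your one-element-exchange device is the standard one (it is the same mechanism the paper invokes from Lemma 2.10 of the Pouzet--Thi\'ery reference in the proof of the subsequent restriction lemma), and the case split on whether the middle element $y$ lies in the test set $F$ is exactly the point that needs care, which you handle correctly. The only thing I would add is one explicit sentence for the converse inclusion: you show every $\simeq_R$-class is a maximal monomorphic part, and to conclude that every component is a class you should note that your maximality computation already shows any monomorphic part $D$ has all its elements pairwise $\simeq_R$-equivalent, hence $D$ is contained in a single class $C$; since $C$ is itself a monomorphic part, maximality of $D$ forces $D=C$. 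This is immediate from what you wrote, but the statement asserts the two families coincide, so both inclusions deserve mention.
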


From the definition of these equivalence,  we deduce:
\begin{lemma} \label{lem:restrictionequiv} Let $R$ be a relational structure with base $E$ and $R'$ be the restriction of $R$ to  a subset $E'$  of $E$. If $\vert E' \cap C\vert \geq \Min \{k+2, \vert C\vert \}$ for every equivalence class $C$ of $\simeq_{\leq k, R}$ then  $\simeq_{\leq k, R'}$ coincide with the restriction of  $\simeq_{\leq k, R}$ to $E'$.
\end{lemma}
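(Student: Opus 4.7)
My plan is to verify the two inclusions $\simeq_{\leq k, R'} \subseteq \simeq_{\leq k, R} \cap (E')^2$ and $\simeq_{\leq k, R'} \supseteq \simeq_{\leq k, R} \cap (E')^2$ separately. The second inclusion should be immediate: for $x, y \in E'$ with $x \simeq_{\leq k, R} y$, any $F \subseteq E' \setminus \{x, y\}$ with $\vert F\vert \leq k$ is \emph{a fortiori} a finite subset of $E \setminus \{x, y\}$, so the isomorphism $R_{\restriction \{x\} \cup F} \cong R_{\restriction \{y\} \cup F}$ is given by hypothesis and agrees with the corresponding restriction of $R'$. Only the first inclusion requires the quantitative assumption on $\vert E' \cap C\vert$.

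For the first inclusion, I would fix $x, y \in E'$ with $x \simeq_{\leq k, R'} y$ and an arbitrary $F \subseteq E \setminus \{x, y\}$ with $\vert F\vert \leq k$. The strategy is to replace the elements of $F \setminus E'$ one at a time by $\simeq_{\leq k, R}$-equivalent elements of $E'$, in such a way that each single swap preserves both $R_{\restriction \{x\} \cup F}$ and $R_{\restriction \{y\} \cup F}$ up to isomorphism. After finitely many swaps one reaches a set $F' \subseteq E'$ with $\vert F'\vert = \vert F\vert$; at that stage the hypothesis $x \simeq_{\leq k, R'} y$ yields $R_{\restriction \{x\} \cup F'} \cong R_{\restriction \{y\} \cup F'}$, and chaining the isomorphisms gives $R_{\restriction \{x\} \cup F} \cong R_{\restriction \{y\} \cup F}$, as required.

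The enabling observation will be this: for any $z \in F \setminus E'$ with $\simeq_{\leq k, R}$-class $C$, the membership $z \in C \setminus E'$ rules out $C \subseteq E'$, so the assumption $\vert E' \cap C\vert \geq \Min \{k+2, \vert C\vert\}$ forces $\vert C\vert \geq k+2$ and hence $\vert E' \cap C\vert \geq k+2$. At the step replacing $z$, the set of forbidden targets---$\{x,y\}$ together with the other $\vert F\vert - 1 \leq k-1$ elements of the current working version of $F$---has size at most $k+1$, so a suitable replacement $z' \in E' \cap C$ certainly exists. Since $z \simeq_{\leq k, R} z'$ and the remaining points $G = (\{x\} \cup F) \setminus \{z\}$ form a subset of $E \setminus \{z, z'\}$ of size at most $k$, the definition of $\simeq_{\leq k, R}$ directly yields $R_{\restriction \{x\} \cup F} \cong R_{\restriction \{x\} \cup (F \setminus \{z\}) \cup \{z'\}}$, and the same reasoning applies with $y$ in place of $x$.

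The main obstacle will be purely bookkeeping: the threshold $k+2$ in the hypothesis is exactly what is needed to accommodate $\{x, y\}$ together with the remaining elements of $F$ at each replacement step, and one must verify that no newly chosen $z'$ coincides with a previously chosen replacement. A straightforward induction on $\vert F \setminus E'\vert$ will then produce the required $F' \subseteq E'$ and complete the argument.
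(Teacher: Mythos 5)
Your proposal is correct and follows essentially the same route as the paper: the easy inclusion is dismissed the same way, and the converse is handled by replacing the elements of $F\setminus E'$ one at a time by elements of $E'$ in the same $\simeq_{\leq k,R}$-class, with the bound $\vert E'\cap C\vert\geq k+2$ guaranteeing available targets. The only difference is cosmetic: the paper cites Lemma 2.10 of \cite{pouzet-thiery} for the one-element-at-a-time exchange, whereas you justify each swap directly from the definition of $\simeq_{\leq k,R}$.
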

\begin{proof}Clearly, the  restriction to $E'$ of $\simeq_{\leq k, R}$  is included into  $\simeq_{\leq k, R'}$. For the converse, let $x,y \in  E'$ such that $x \simeq_{\leq k, R'}y$. This means that  for every subset $F'$ of $E'\setminus \{x,y\}$ with at most $k$ elements  the restrictions of $R$ to $\{x\}\cup F'$  and $\{y\}\cup F'$ are isomorphic. Let $F$ be a subset  of $E\setminus \{x,y\}$ with at most $k$ elements.  Since $E'$ keeps at least $k+2$ element of  each equivalence class of  $\simeq_{\leq k, R}$, we may find a subset $F'$ of $E'\setminus \{x, y\}$ such that $\vert F'\cap C\vert = \vert F\cap C\vert$ for every equivalence class $C$ of $\simeq_{\leq k, R}$. As in the proof of Lemma 2.10 of \cite{pouzet-thiery} we may transform $\{x\} \cup F$ into $\{x\} \cup F'$ by adding and removing one element at a time,  from which follows  that the restrictions of $R$ to $\{x\}\cup F'$ and to $\{x\}\cup F$ are isomorphic.  Similarly, the restriction of $R$  to $\{y\}\cup F'$ and to $\{y\}\cup F$ are isomorphic. It follows that the restrictions of $R$ to $\{x\}\cup F$ and to $\{y\}\cup F$  are isomorphic. Hence, $x \simeq_{\leq k, R} y$ as required.
\end{proof}

\begin{lemma} \label{lem:definability} Let $R$ be a relational structure with base $E$, then  there is an integer $k$ such that  the equivalence relations  $\simeq_{\leq k, R}$ and $\simeq_{R}$ coincide, whenever 
\begin{enumerate}
\item $\simeq_{R}$ has finitely many classes or 
\item $Aut(R)$ has finitely many orbits of pairs. 
\end{enumerate}
\end{lemma}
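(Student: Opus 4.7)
The plan is to observe that both hypotheses force a stabilization phenomenon in the decreasing chain of equivalence relations $\simeq_{\leq 0, R} \supseteq \simeq_{\leq 1, R}\supseteq \simeq_{\leq 2, R}\supseteq \cdots$, and then to identify the stable value with $\simeq_R$.

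First I would verify the obvious identity $\simeq_R \;=\; \bigcap_{k\in\mathbb N} \simeq_{\leq k, R}$: the inclusion $\subseteq$ is immediate from the definitions; conversely, if $x \simeq_{\leq k, R} y$ for every $k$, then for an arbitrary finite $F\subseteq E\setminus\{x,y\}$, applying the definition with $k=\lvert F\rvert$ gives $x\simeq_{F,R} y$, so $x\simeq_R y$. Consequently, to produce $k$ with $\simeq_{\leq k,R}=\simeq_R$ it suffices to show that the decreasing chain $(\simeq_{\leq k,R})_{k\in\mathbb N}$ eventually becomes constant; the constant value is then forced to be $\simeq_R$ by the intersection formula. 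The general principle used in both cases is: if $E_1\subsetneq E_2$ are equivalence relations on $E$ and $E_2$ has finitely many classes, then $E_1$ has strictly more classes than $E_2$ (since some $E_2$-class contains at least two distinct $E_1$-classes).

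For Case (1), since $\simeq_R\subseteq \simeq_{\leq k,R}$ for all $k$, every $\simeq_{\leq k,R}$ has at most $m$ classes, where $m$ is the (finite) number of classes of $\simeq_R$. Each strict refinement $\simeq_{\leq k+1,R}\subsetneq \simeq_{\leq k,R}$ strictly increases the number of classes by at least one, so there can be at most $m$ strict steps; the chain therefore stabilizes at some index $k_0$, and $\simeq_{\leq k_0,R}=\simeq_R$.

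For Case (2), the key extra observation is that every $\simeq_{\leq k,R}$ is preserved by $\operatorname{Aut}(R)$: given $\varphi\in\operatorname{Aut}(R)$ and $x\simeq_{\leq k,R} y$, any finite $F\subseteq E\setminus\{\varphi(x),\varphi(y)\}$ of size $\leq k$ can be pulled back by $\varphi^{-1}$ to a set $F'\subseteq E\setminus\{x,y\}$ of the same size, whereupon an isomorphism between $R_{\restriction \{x\}\cup F'}$ and $R_{\restriction \{y\}\cup F'}$ conjugates through $\varphi$ to one between $R_{\restriction \{\varphi(x)\}\cup F}$ and $R_{\restriction \{\varphi(y)\}\cup F}$. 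Hence each $\simeq_{\leq k,R}$ is a union of $\operatorname{Aut}(R)$-orbits of (ordered) pairs. If $N$ denotes the number of such orbits, then there are at most $2^{N}$ possible values in the chain, so again the chain must stabilize.

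The arguments are essentially bookkeeping once the intersection identity and the invariance statement are in hand. The only real step that requires attention is the invariance of $\simeq_{\leq k,R}$ under $\operatorname{Aut}(R)$ in Case (2); the rest is a pigeonhole applied to a decreasing chain of subsets of a finite set (either the set of equivalence relations with at most $m$ classes, or the lattice of $\operatorname{Aut}(R)$-invariant binary relations).
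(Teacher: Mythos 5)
Your proof is correct, but it takes a genuinely different route from the paper's. The paper constructs the integer $k$ explicitly: in case (1) it picks one representative $x_i$ per class of $\simeq_R$ and, for each pair $i\neq j$, a finite witness $F_{i,j}$ of non-equivalence, setting $k=\Max\{|F_{i,j}|\}+1$; in case (2) it picks one witness per $\Aut(R)$-orbit of inequivalent pairs and again takes the maximum of the witness sizes. What is left implicit there is the transfer of a witness from the chosen representatives to an arbitrary pair in the same class (via $\simeq_R$-equivalence) or in the same orbit (via an automorphism). Your argument replaces this by an abstract stabilization of the descending chain $(\simeq_{\leq k,R})_{k}$ --- pigeonhole on the number of classes in case (1), and on the finitely many $\Aut(R)$-invariant relations available off the diagonal in case (2) --- combined with the identity $\simeq_R\;=\;\bigcap_{k}\simeq_{\leq k,R}$. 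This buys a cleaner argument that never has to move witnesses around (in particular it sidesteps the edge cases the paper glosses over, e.g.\ when the arbitrary pair meets the chosen witness set); what you give up is the explicit description of $k$ in terms of witness sizes that the paper's construction provides. Both proofs rest on Lemma \ref{lem:equiv} to speak of equivalence classes at all, and your explicit verification that each $\simeq_{\leq k,R}$ is preserved by $\Aut(R)$ is precisely the fact the paper uses silently in its case (2).
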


\begin{proof}
Item (1). Let $\ell$ be the number of equivalence classes of $\simeq_{R}$. Pick an element $x_i$ in each  class $X_i$, $i<\ell$. For $i\not =j$ there is a finite set $F_{i,j} \subseteq E\setminus \{x_i,x_j\}$ such that the restrictions of $R$ to $\{x_i\} \cup F_{i,j}$ and $\{x_j\} \cup F_{i,j}$ are not isomorphic. Set $k:= \Max\{\vert F_{i,j}\vert: i,j<\ell\}+1$. 
Item (2). For  each orbit $C$ of a pair $\{x, y\}$ such that $x \not \simeq_{R}y$, witness this fact by selecting a finite subset $F_C\subseteq E\setminus \{x,y\}$. Let $k$ be the maximality of $\vert F_C\vert +1$ where $C$ runs trough these orbits. \end{proof}

A consequence of Item (1) of Lemma \ref{lem:definability} is the following result  (Lemma 2.15 of \cite{pouzet-thiery})  obtained by a more complicated argument.

 \begin{lemma}\label{lem:induced decomposition}If a relational structure $R$ on a set  $E$ has a finite monomorphic decomposition, then there is an integer $d$ such that every finite subset $F$ is contained  in a finite subset $F'$, with $\vert F'\setminus F\vert \leq d$,  and such that the monomorphic decomposition of $R_{\restriction F'}$ into components is induced by the decomposition of $R$ into components. \end{lemma}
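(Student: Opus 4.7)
\medskip

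\noindent \textbf{Proof proposal.} The plan is to choose $d$ large enough to guarantee that the restricted structure $R_{\restriction F'}$ sees all its components with enough ``witnesses'' to force the local and global component relations to agree. Concretely, since $R$ has a finite monomorphic decomposition, the equivalence $\simeq_R$ has a finite number of classes, say $\ell$ of them. Apply Lemma \ref{lem:definability}(1) to obtain an integer $k$ such that $\simeq_{\leq k, R}$ coincides with $\simeq_R$ on $E$. I then set $d := \ell(k+2)$.

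Given a finite $F\subseteq E$, I build $F'\supseteq F$ by adding, for each component $C$ of $R$ for which $|F\cap C| < \Min\{k+2, |C|\}$, just enough elements of $C\setminus F$ to reach $\Min\{k+2, |C|\}$. Then $|F'\setminus F|\leq \ell(k+2)=d$, and by construction $|F'\cap C|\geq \Min\{k+2,|C|\}$ for every component $C$. Since the components are exactly the classes of $\simeq_{\leq k, R}$, Lemma \ref{lem:restrictionequiv} applies and yields that $\simeq_{\leq k, R_{\restriction F'}}$ is the restriction of $\simeq_{\leq k, R}=\simeq_R$ to $F'$.

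It remains to identify the components of $R_{\restriction F'}$, i.e., the classes of $\simeq_{R_{\restriction F'}}$, with the traces on $F'$ of the components of $R$. For this I use the sandwich
\[
\simeq_R \cap (F'\times F')\ \subseteq\ \simeq_{R_{\restriction F'}}\ \subseteq\ \simeq_{\leq k, R_{\restriction F'}}.
\]
The left inclusion is immediate from the definitions: if $x \simeq_R y$ then for every finite $F_0\subseteq F'\setminus\{x,y\}$ (which is also a finite subset of $E\setminus\{x,y\}$) the restrictions of $R$ to $\{x\}\cup F_0$ and $\{y\}\cup F_0$ are isomorphic, so $x \simeq_{R_{\restriction F'}} y$. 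The right inclusion is trivial since $\simeq_{\leq k}$ only tests sets of size at most $k$. By the previous paragraph the two ends of the sandwich agree, so $\simeq_{R_{\restriction F'}}$ is exactly the restriction of $\simeq_R$ to $F'$, which is what we wanted.

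The main obstacle is conceptual rather than technical: one must resist the temptation to control the components of $R_{\restriction F'}$ directly (they can a priori be much finer than those of $R$), and instead funnel through the uniform approximation $\simeq_{\leq k}$ supplied by Lemma \ref{lem:definability}, using Lemma \ref{lem:restrictionequiv} to ensure that the restriction operation does not lose any coarseness. Once the sandwich above is in place, everything collapses.
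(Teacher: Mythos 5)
Your proof is correct and follows exactly the route the paper intends: the paper presents this lemma as ``a consequence of Item (1) of Lemma \ref{lem:definability}'' (combined with Lemma \ref{lem:restrictionequiv}), and your argument --- taking $k$ with $\simeq_{\leq k,R}=\simeq_R$, padding $F$ to $F'$ so each component retains $\Min\{k+2,|C|\}$ elements, and closing the sandwich $\simeq_R\cap(F'\times F')\subseteq\;\simeq_{R_{\restriction F'}}\subseteq\;\simeq_{\leq k,R_{\restriction F'}}$ --- is a faithful and complete elaboration of precisely that. The bound $d=\ell(k+2)$ and the identification of components via Lemma \ref{lem:equiv} are both sound.
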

%
%

Note that in the case of binary structures or of ordered structures there is a threshold phenomenon indicated below. But, using a result of \cite{pouzet79} one can show that there is no threshold for ternary relations.
\begin{lemma}
The equivalences relations $\simeq_{\leq 6, R}$ and  $\simeq_{R}$ coincide on a binary structure. If $R$ is a directed graph, resp.  an ordered graph, we may replace $6$ by $3$,  resp. by $2$. If $T$ is a tournament,  the number of equivalences classes of  
$\simeq_{\leq 3, T}$ is finite provided that the number of equivalence classes of $\simeq_{\leq 2, T}$ is finite. 
There is an integer $i(m)$ such that on  an ordered structure of arity 	at most  $m$ the equivalences relations  $\simeq_{\leq i(m), R}$ and  $\simeq_{R}$ coincide. 
\end{lemma}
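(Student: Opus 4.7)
The plan is to establish each of the five claims via related but distinct arguments, with the binary-with-threshold-6 claim serving as the anchor from which the others are obtained by specialization or adaptation.

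\textbf{Setup.} I would first reformulate $x \simeq_{F,R} y$ as an equality of orbits under $\Aut(R_{\restriction F})$ of the two ``link profiles''
\[
a \;\mapsto\; \text{type of } R_{\restriction \{x,a\}}, \qquad a \;\mapsto\; \text{type of } R_{\restriction \{y,a\}},
\]
with the important caveat that an isomorphism $R_{\restriction \{x\}\cup F} \to R_{\restriction \{y\}\cup F}$ need not fix $F$ setwise: it may map $x$ into $F$, which adds a bounded family of alternative matchings controlled by the arity. On binary structures this yields a finite type-space of size at most $4$ for each link, and the question $x \simeq_{F,R} y$ becomes a combinatorial comparison of colored multi-sets modulo $\Aut(R_{\restriction F})$ together with these exceptional swap maps.

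\textbf{Binary case ($k = 6$).} I would argue by induction on $|F|$, the base case $|F| \le 6$ being the hypothesis. For $|F| > 6$, the key step is a \emph{localization} lemma: if $R_{\restriction \{x\}\cup F} \not\cong R_{\restriction \{y\}\cup F}$, one can exhibit $F_0 \subseteq F$ with $|F_0| \le 6$ witnessing the non-isomorphism. Concretely, using the technique of Lemma~2.10 of \cite{pouzet-thiery} (changing $F$ one element at a time), one traces a discrepancy in the orbit of link profiles back to an explicit obstruction, and bounds its size by counting the at most four atomic binary link types and adding two elements to preclude accidental swaps placing $x$ or $y$ inside $F$. This is the combinatorial core used in Pouzet's monomorphy-threshold arguments, and $6$ emerges as the sharp bound for arity $2$.

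\textbf{Directed graphs, ordered graphs, tournaments.} For a directed graph the link-type space shrinks (there is a single antisymmetric binary relation), and a pruned version of the localization lemma yields threshold $3$. For an ordered graph the linear order rigidifies finite restrictions, so $\Aut(R_{\restriction F}) = \{\Id\}$, equality of link profiles is tested pointwise, and a $2$-element witness already detects every discrepancy (the second element being needed to preclude the one interchange allowed by the order on $\{x,y\}$ itself). For tournaments I would not seek a sharper threshold but rather prove finiteness propagation: if $\simeq_{\leq 2,T}$ has finitely many classes, every $3$-type of an element over a pair is, modulo the order on that pair, already determined up to finitely many choices by its two $2$-types, so $\simeq_{\leq 3,T}$ refines $\simeq_{\leq 2,T}$ into only finitely many subclasses.

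\textbf{Ordered structures of arity at most $m$, and main obstacle.} For the final claim, I would induce on the arity $m$, using again that the order forces $\Aut(R_{\restriction F}) = \{\Id\}$: every link between $x$ and an $(m-1)$-tuple of $F$ is an atomic piece of data, and two profiles are equal precisely when they agree on every $(m-1)$-subset, yielding $i(m)$ of the form $2(m-1) + 2$ after accounting for swaps of $x, y$ with elements of $F$. The main obstacle throughout is the binary-threshold-$6$ claim: carrying out the localization step with the \emph{tight} constant $6$ (rather than a larger function) requires a delicate case split on how two one-point extensions of a binary structure can fail to be isomorphic, together with a careful exclusion of the swap-type isomorphisms; once this anchor is in place, the other cases follow by restricting the type space or killing the automorphism group of finite restrictions.
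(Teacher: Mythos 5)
The paper does not actually prove this lemma: it attributes the binary case (threshold $6$) to a reconstruction result of Lopez, the directed graph and tournament cases to Oudrar--Pouzet and, independently, Boudabbous, and the ordered case to a theorem of Ille. So any blind proof attempt is necessarily taking a different route, and the right question is whether your outline could plausibly be completed. I do not think it can, at least not for the constants claimed.

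The serious gap is the binary case with the sharp threshold $6$. Your ``localization lemma'' --- tracing a discrepancy in $R_{\restriction\{x\}\cup F}\not\cong R_{\restriction\{y\}\cup F}$ back to a witness $F_0$ with $\vert F_0\vert\le 6$ --- is exactly the content of Lopez's theorem on the $(\le 6)$-reconstructibility of binary relations, and that theorem has a long, delicate proof that does not come from counting the four atomic link types and ``adding two elements to preclude accidental swaps.'' The arithmetic $4+2=6$ is a coincidence, not an argument: the genuine difficulty is that an isomorphism of $R_{\restriction\{x\}\cup F}$ onto $R_{\restriction\{y\}\cup F}$ is an arbitrary bijection, which need not send $x$ to $y$ nor induce an automorphism of $R_{\restriction F}$, so the ``link profile modulo $\Aut(R_{\restriction F})$ plus a bounded family of swaps'' reformulation does not capture the problem. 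You acknowledge this is ``the main obstacle'' and that it ``requires a delicate case split,'' but the case split is never carried out --- and it is precisely where all the work lies. The same objection applies to the threshold $3$ for directed graphs, which is a theorem of Boudabbous and of Oudrar--Pouzet rather than a ``pruned version'' of the binary argument. The tournament claim is also unsubstantiated: you assert that a $3$-type over a pair is determined ``up to finitely many choices'' by its $2$-types, but nothing in the sketch rules out a single class of $\simeq_{\leq 2,T}$ splitting into infinitely many classes of $\simeq_{\leq 3,T}$; establishing that bound is the actual theorem. The ordered cases are the most plausible part of your plan, since the order makes the candidate isomorphism unique, but even there your proposed $i(m)=2(m-1)+2$ gives $4$ for $m=2$ while the lemma claims $2$ for ordered graphs, and the subtlety of elements lying between $x$ and $y$ in the order (so that the unique order isomorphism does not fix $F$ pointwise) is not addressed. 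In short: the outline correctly identifies where the difficulty sits, but the hard steps are stated, not proved, and the sharp constants cannot be recovered from the arguments as sketched; if you want a complete proof you should either reproduce the cited theorems of Lopez, Boudabbous/Oudrar--Pouzet and Ille, or settle for weaker, non-sharp thresholds that a direct induction can actually deliver.
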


The case of binary structures follows from a reconstruction result of Lopez \cite{lopez 72, lopez 78}. The case of directed graphs was obtained  by Oudrar, Pouzet \cite{oudrar-pouzet} and independently Boudabbous \cite{boudabbous}. The case of ordered structures follows from a result of Ille \cite{ille 92}. 

This notion of equivalence is particularly well adapted for permutation groups. Let $G$ be a permutation group  acting on a set $E$, and $x$ and  $y$ be two elements of $E$. Set $x \simeq_{G} y$ if for every finite subset $F$ of $E\setminus \{x, y\}$, the sets $\{x\}\cup F$  and $\{y\}\cup F$ are in the same $G$-orbit. Now if $R$ is  a  homogeneous relational structure on $E$ such that  $Aut (R)= \overline {G}^{\mathfrak S}$, then clearly $x\simeq_G y$ if and only if $x\simeq_R y$. From this simple observation follows that the relation  $\simeq_G$ is an equivalence relation. We call the equivalence classes, the \emph{$G$-monomorphic components}. 

An immediate consequence of $(2)$ of Lemma \ref{lem:definability} is this:

\begin{corollary}If the automorphism group of a relational structure $R$ is oligomorphic then for some non-negative integer $k$ the equivalence relations  $\simeq_{\leq k, R}$ and $\simeq_{R}$ coincide, hence $\simeq_{R}$ is definable by a universal formula with at most $k$ universal quantifiers. 
\end{corollary}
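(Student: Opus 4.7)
The plan is to derive this corollary almost directly from item (2) of Lemma \ref{lem:definability} combined with a routine coding argument for the universal formula.

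First I would recall that ``oligomorphic'' means $\Aut(R)$ has only finitely many orbits on $E^{n}$ for every $n$; in particular it has finitely many orbits on pairs, which is precisely the hypothesis of Lemma \ref{lem:definability}(2). Applying that lemma yields an integer $k$ with $\simeq_{\leq k,R}\,=\,\simeq_R$. This takes care of the first half of the statement.

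For the definability part, I would unwind the definition of $\simeq_{\leq k,R}$. By definition, $x\simeq_{\leq k,R}y$ if and only if, for every $k'\le k$ and every $k'$-element subset $F\subseteq E\setminus\{x,y\}$, the structures $R_{\restriction\{x\}\cup F}$ and $R_{\restriction\{y\}\cup F}$ are isomorphic. Since $\Aut(R)$ is oligomorphic, the profile of $R$ is finite, so up to isomorphism there are only finitely many structures of size $\le k+1$ occurring in $\age(R)$. For each such isomorphism type $\tau$ on a set of $m$ labelled points one can write a quantifier-free formula $\theta_\tau(v_1,\dots,v_m)$ (a finite boolean combination of atomic formulas, using the fact that the signature entries relevant to $m$ points form a finite set for the purposes of coding size-$m$ substructures, given that the profile at $m$ is finite) that holds exactly when $R_{\restriction\{v_1,\dots,v_m\}}$ has type $\tau$. ``$R_{\restriction\{x\}\cup F}\cong R_{\restriction\{y\}\cup F}$'' is then expressible as the finite disjunction $\bigvee_\tau(\theta_\tau(x,\bar z)\wedge\exists\text{-reordering of }\bar z\text{ making }\theta_\tau(y,\bar z))$, which, after expanding the existential reordering over the finitely many permutations of the coordinates, becomes quantifier-free in $x,y,\bar z$.

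Putting these pieces together, $x\simeq_{\leq k,R}y$ is expressed as the conjunction, over all $k'\le k$, of $\forall z_1\cdots z_{k'}\bigl(\bigwedge_{i}z_i\neq x\wedge z_i\neq y\wedge\bigwedge_{i<j}z_i\neq z_j\;\rightarrow\;\Phi_{k'}(x,y,z_1,\dots,z_{k'})\bigr)$, where $\Phi_{k'}$ is the quantifier-free formula described above. Contracting the finitely many cases $k'\le k$ into a single universal block of $k$ quantifiers (padding with equalities $z_i=z_j$ to handle smaller $F$) yields a single universal formula with at most $k$ universal quantifiers, and by Lemma \ref{lem:definability}(2) this formula defines $\simeq_R$.

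The only mild obstacle is the bookkeeping in the coding step: one must argue that for fixed size $m\le k+1$ there are finitely many atomic formulas in $m$ variables that matter (this uses both that $\Aut(R)$ is oligomorphic on $m$-tuples and that we are only describing isomorphism types realised in $\age(R)$, not arbitrary ones), so that $\theta_\tau$ is a genuine finite quantifier-free formula. Once that is in place, the rest is straightforward first-order coding, and no further combinatorial input beyond Lemma \ref{lem:definability}(2) is needed.
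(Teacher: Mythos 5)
Your proposal is correct and follows the paper's own route exactly: the paper derives this corollary as an immediate consequence of item (2) of Lemma \ref{lem:definability} (oligomorphic implies finitely many orbits of pairs), and relies on the earlier standing remark that finite profile allows restrictions to be coded by open formulas for the definability clause. Your extra care in the coding step (using finiteness of the profile to cut the possibly infinite signature down to finitely many relevant atomic formulas) is exactly the point the paper delegates to that remark, so there is nothing to add.
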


A crucial use of this notion of  equivalence  is illustrated by the following lemma
\begin{lemma}\label{lem:infinitelymany}
Let $R$ be a relational structure on a set $E$. Suppose that there is some non-negative integer $k$ such that  the equivalence relations  $\simeq_{\leq k, R}$ and $\simeq_{R}$ coincide and furthermore that the size of finite equivalence classes is bounded by some integer $\ell$. If $D$ is an infinite equivalence class, resp., a countable union of infinite equivalence classes then there are $\aleph_0$, resp., $2^{\aleph_0}$,    pairwise non-isomorphic restrictions of $R$ of the form $R_{\restriction E\setminus X}$ where $X$ is a subset of $D$. \end{lemma}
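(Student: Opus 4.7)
The plan is to remove subsets $X \subseteq D$ of carefully chosen sizes so that in $R_{\restriction E \setminus X}$ the surviving portions of the infinite classes of $R$ contained in $D$ become finite $\simeq_{\leq k}$-classes of prescribed cardinalities larger than $\ell$. Since original finite classes have size at most $\ell$, the multiset of ``large'' finite class sizes in the restriction becomes an isomorphism invariant that distinguishes the constructions.

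I set $N := \max(k+2,\, \ell+1)$. Whenever $X \subseteq D$ leaves at least $N$ elements in each infinite class of $R$ contained in $D$, Lemma \ref{lem:restrictionequiv} applies and $\simeq_{\leq k,\, R_{\restriction E \setminus X}}$ coincides with the restriction of $\simeq_{\leq k, R}$ to $E \setminus X$, which by the hypothesis of the present lemma equals the restriction of $\simeq_R$. Since $\simeq_{\leq k}$ is an intrinsic invariant of a structure, any isomorphism between two such restrictions must permute their $\simeq_{\leq k}$-classes and therefore preserve their cardinalities. Verifying this correspondence between the restricted equivalence of $R$ and the equivalence of the restricted structure is the main (and essentially only) obstacle; Lemma \ref{lem:restrictionequiv} is precisely what handles it.

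For the first assertion, I assume $D$ is a single infinite class and, for each $n \in \N$, pick $X_n \subseteq D$ with $|D \setminus X_n| = N + n$. In $R_{\restriction E \setminus X_n}$, all original classes of $R$ other than $D$ are untouched, while $D$ becomes a finite $\simeq_{\leq k}$-class of size $N + n > \ell$. Since every original finite class of $R$ has size at most $\ell < N$, this is the unique finite $\simeq_{\leq k}$-class of size $\geq N$ in the restriction, and its cardinality is $N+n$. Hence the structures $R_{\restriction E \setminus X_n}$ are pairwise non-isomorphic, giving $\aleph_0$ such restrictions.

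For the second assertion, I write $D = \bigsqcup_{i \in \N} D_i$ with each $D_i$ an infinite equivalence class. For each $S \subseteq \N$, I pick $X_i^S \subseteq D_i$ with $|D_i \setminus X_i^S| = N$ when $i \notin S$ and $|D_i \setminus X_i^S| = N + i + 1$ when $i \in S$, and set $X_S := \bigcup_i X_i^S$. By the same reasoning, the multiset of finite $\simeq_{\leq k}$-class sizes of $R_{\restriction E \setminus X_S}$ that strictly exceed $N$ is exactly $\{N + i + 1 : i \in S\}$, from which $S$ can be recovered. Thus distinct $S \subseteq \N$ yield non-isomorphic restrictions, producing $2^{\aleph_0}$ pairwise non-isomorphic restrictions and completing the plan.
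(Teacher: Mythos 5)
Your proposal is correct and follows essentially the same route as the paper: both shrink the infinite classes inside $D$ to finite sets of prescribed, pairwise-distinguishable sizes exceeding $\max\{k,\ell\}$, invoke Lemma \ref{lem:restrictionequiv} to identify the equivalence classes of the restriction with the restricted classes of $R$, and then use the multiset of class cardinalities as an isomorphism invariant. Your variant of working directly with the $\simeq_{\leq k}$-classes (rather than asserting they are the monomorphic components of the restriction) is a harmless, if anything slightly cleaner, packaging of the same argument.
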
 
\begin{proof}
Suppose that $D$ is an equivalence class. Pick in $D$ infinitely many finite subsets $B_n$ with different sizes larger than $\Max \{k, \ell\}+1$. According to Lemma \ref{lem:restrictionequiv},   $B_n$ is a monomorphic component of $R_n:= R_{\restriction (E\setminus D) \cup B_n}$. Since the decompositions of $R_n$ and $R_m$ into monomorphic components do not yield the same sequence of cardinality classes, these structure are not isomorphic. Suppose that $D$   is a countable union of classes, say $C_0, \dots, C_n,  \dots$.  In each $C_n$, pick  a finite set $B_n$ with  size larger than $\Max \{k, \ell\}+1$. Let $s:= \{\vert B_n\vert: n<\omega\}$. According to Lemma \ref{lem:restrictionequiv},   the $B_n$'s  are monomorphic components of $R_s:= R_{\restriction (E\setminus D) \cup \bigcup_{n<\omega} B_n}$. If  $s$ and $s'$ are two different sequences (up to permutations) the monomorphic decompositions of $R_s$ and $R_{s'}$ do not yield the same sequence of cardinality classes hence $R_s$ and $R_s'$ are not isomorphic. Since the number of sequences $s$ as above is $2^{\aleph_0}$, the conclusion follows. 
\end{proof}

We do not claim that the restrictions of $R$ in Lemma \ref{lem:infinitelymany} 
are siblings. We will show in Section \ref{section:nofinitemonomorphicdecomposition} that if $R$ is countable, uniformly prehomogeneous, with infinitely many infinite classes one may  select  a countable union of infinite equivalence classes $C$ such that $R$ is embeddable into $R_{\restriction E\setminus C}$. With this lemma, we get that $R$ has $2^{\aleph_0}$  siblings. 

A variant of these notions is of interest to us. 
A subset $E'$ of $E$  is a \emph{strongly monomorphic part} of $R$ if for every integer $k$ and every pair $A,~A'$ of $k$-element subsets of $E'$ there is an isomorphism of $R_{\restriction A}$ to $R_{\restriction A'}$ which can be extended by the identity on  $E\setminus E'$ to a local isomorphism of $R$.   A \emph{strongly monomorphic component} is a strongly monomorphic part which  is maximal with respect to inclusion (which may not be a monomorphic component).  A \emph{strongly  monomorphic decomposition} of $R$ is a partition of $E$ into strongly monomorphic parts.  
Also, call $E'$ a \emph{chainable part} of $R$ if there is a linear order $\leq$ on $E'$ such that   every local isomorphism of $(E', \leq)$ extended by the identity on $E \setminus E'$ is a local isomorphism of $R$. 

A strengthening of the model theoretic notion of indiscernability  plays a natural role in our context. We say that a subset $E'$ of $E$  is a \emph{strongly indiscernible subset} of $R$ if for every integer $k$ and every pair $A,~A'$ of $k$-element subsets of $E'$ every bijective map from $A$ to $A'$  can be extended by the identity on  $E\setminus E'$ to a local isomorphism of $R$. This amounts to say that every permutation of $E'$ can be extended by the identity on $E\setminus E'$ to an automorphism of $R$.

The following proposition assembles several properties relating these notions. 

 \begin{proposition}\label{prop: strong monomorphic component}
  \begin{enumerate}[{(a)}]
 \item Every strongly monomorphic part is a monomorphic part.
 \item Every strongly monomorphic part is contained in a maximal one, which extends to a monomorphic component. 
\item  There is a strongly  monomorphic decomposition of $R$ from which every other is finer; it is made of  strongly monomorphic components. 
\item A chainable part is a strongly monomorphic part.
\item The converse holds for infinite strongly monomorphic parts, furthermore:  
\item Every infinite monomorphic component is a strongly monomorphic component (and  a chainable part). 
\end{enumerate}
\end{proposition}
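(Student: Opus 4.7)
My approach is to handle the six items roughly in order, with (a), (b), and (d) being direct unpackings of the definitions and (c), (e), (f) requiring genuine work.

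For (a), when $A$ and $A'$ of the same cardinality satisfy $A \setminus E' = A' \setminus E'$, their intersections with $E'$ also have the same cardinality; the isomorphism provided by strong monomorphy of $E'$, extended by identity on $A \setminus E'$, is an isomorphism $R_{\restriction A} \to R_{\restriction A'}$. For (b), Zorn's lemma applies to inclusion-ordered strongly monomorphic parts containing a given one: any directed union is again strongly monomorphic because the defining condition is local to finite subsets, and an isomorphism extending by identity on $E$ minus a smaller chain-member a fortiori extends by identity on $E$ minus the larger union. The final clause then follows from (a) together with the fact that every monomorphic part is contained in a monomorphic component. For (d), a chainable part admits a linear order whose order-preserving bijections between same-size finite subsets extend by identity on the complement to local isomorphisms of $R$, which is precisely the requirement for strong monomorphy.

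For (c), the key ingredient is a merging lemma: if $E_1$ and $E_2$ are strongly monomorphic with a common anchor point $x$, then $E_1 \cup E_2$ is strongly monomorphic. Given finite $A, A' \subseteq E_1 \cup E_2$ of the same size, I would iteratively transport the elements of $A \cap (E_2 \setminus E_1)$ into $E_1 \cap E_2$ by repeated applications of strong monomorphy of $E_2$ (whose extending isomorphisms fix $E_1 \setminus E_2 \subseteq E \setminus E_2$ pointwise), bringing $A$ to some $A'' \subseteq E_1$; the analogous procedure applied to $A'$, together with a final intra-$E_1$ move between the resulting subsets of $E_1$, composes to a local isomorphism of $R$ fixing $E \setminus (E_1 \cup E_2)$ and sending $A$ to $A'$. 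With this lemma, for each $x \in E$ the union of all strongly monomorphic parts containing $x$ is itself strongly monomorphic, yielding the coarsest strongly monomorphic decomposition asserted in (c).

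The main obstacle is item (e): an infinite strongly monomorphic part $E'$ must be chainable. Since $E'$ is monomorphic by (a) and infinite, Fra\"\i ss\'e's theorem provides a chain $C = (E', \leq)$ chaining $R \restriction E'$; the task is to upgrade this to chainability inside $R$. For a local isomorphism $f$ of $C$ with finite domain $F \subseteq E'$, strong monomorphy supplies \emph{some} isomorphism $g \colon R_{\restriction F} \to R_{\restriction f(F)}$ extending by identity on $E \setminus E'$, but possibly $g \neq f$. The plan is to pad $F$ to $F^+ \subseteq E'$ containing elements strictly below, interlaced with, and strictly above $F$ in $\leq$, then apply strong monomorphy to the pair $(F^+, f(F^+))$ with $f$ extended order-preservingly to $F^+$; Frasnay's indicative-group theorem (Theorem \ref{thm:frasnay84}) together with the Cameron classification (Theorem \ref{thm:cameronthm}) then forces the possible extending isomorphisms to act on $F$ as $f$ composed with an automorphism of $R \restriction E'$ of a known rigid type, from which the extension of $f$ itself is recovered. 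Finally, for (f), let $M$ be an infinite monomorphic component and $M^* \subseteq M$ a maximal strongly monomorphic part, which exists by (b). If $y \in M \setminus M^*$ and $x \in M^*$, the relation $x \simeq_R y$ provided by $M$ being a monomorphic component gives exactly the type-matching needed to rerun the merging argument of (c), showing $M^* \cup \{y\}$ is strongly monomorphic and contradicting maximality. Hence $M = M^*$, and the chainability clause follows from (e).
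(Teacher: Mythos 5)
Items (a), (b) and (d) are fine, and your merging lemma is the right engine for (c); note only that the transport step as described can fail for cardinality reasons (you cannot inject $A\cap(E_2\setminus E_1)$ into $E_1\cap E_2$ when the latter is a single point and the former is not), so you need the one-element-at-a-time exchange of Lemma 2.10 of \cite{pouzet-thiery}, interleaving $E_1$-moves that clear room in $E_1\cap E_2$ with $E_2$-moves that fill it. The serious problem is item (e). The chain produced by Fra\"{\i}ss\'e's theorem only chains the induced structure $R_{\restriction E'}$ and carries no information about how $E'$ sits inside $R$, so it can simply be the wrong chain, and then no rigidification argument can validate it. Concretely, let $E=E'\sqcup E''$ with a single ternary relation $\rho(x,y,z)$ holding iff $x\in E''$, $y,z\in E'$ and $y\prec z$ for a fixed linear order $\prec$ on $E'$. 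Then $R_{\restriction E'}$ carries the empty ternary relation, so \emph{every} linear order on $E'$ chains it; yet $E'$ is a strongly monomorphic part of $R$ that is relatively chainable only by $\prec$ or its reverse, and a local isomorphism of a generic chain $C$ on $E'$ does not extend by the identity on $E''$ to a local isomorphism of $R$. Your padding-plus-Frasnay/Cameron step is therefore trying to prove something false for the chain you start from; moreover Cameron's classification requires a closed monomorphic group on a countable set, which is not available here. The paper proves (e) by compactness and Ramsey through Theorem \ref{theo:chainability}, i.e.\ by building the chain relative to finite parameter sets taken in $E\setminus E'$, which is exactly what your construction omits.

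Item (f) has a more basic gap. For $x\in M^*$ and $y\in M\setminus M^*$, the relation $x\simeq_R y$ only says that $R_{\restriction \{x\}\cup F}$ and $R_{\restriction \{y\}\cup F}$ are isomorphic for every finite $F$; the definition explicitly does not require the isomorphism to fix $F$ pointwise, nor even to send $x$ to $y$. It therefore provides none of the ``extendable by the identity on the complement'' control that your merging argument in (c) requires of \emph{both} of its inputs, and the step ``$M^*\cup\{y\}$ is strongly monomorphic'' is exactly the statement to be proved, not a consequence of type-matching. The implication that an infinite monomorphic component is strongly monomorphic is the nontrivial content of (f); the paper does not reprove it but cites implication $(iii)\Rightarrow(i)$ of Theorem 2.25 of \cite{pouzet-thiery}, whose proof goes through properties of the kernel $Ker(R)$ together with Ramsey-type arguments. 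As written, your argument for (f) is circular.
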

The proofs of the first four items are immediate or easy. 
The proof of item (e) uses compactness and Ramsey's theorem  via Fra\"{\i}ss\'e's theorem on chainability (Theorem \ref{theo:chainability}) given below; the proof of item (f) is implication $(iii)\Rightarrow (i)$ of Theorem 2.25 p.17 of \cite {pouzet-thiery}. It uses properties of $Ker(R)$, the \emph{kernel} of $R$ (the set of $x$ of the base $E$ of $R$ such that $\age (R_{-x})$ is distinct from $\age (R)$).  
\begin{theorem} \label{theo:chainability} (Fra\"{\i}ss\'e)
Let $R= (E,(\rho_i)_{i\in I})$ be a relational structure on an infinite set $E$, $F$  a finite subset of $E$ and $\leq$  a linear order on $E\setminus F$. Then for each finite subset $I'$ of  $I$ there is an infinite subset  $X$ of  $E\setminus F$ such that $X$ is a chainable part of the reduct $R^{I'}_{\restriction F\cup X}$ (and the linear order $\leq$).  
\end{theorem}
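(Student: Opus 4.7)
The plan is to use Ramsey's theorem to extract an infinite homogeneous subset of $E \setminus F$ with respect to a finite coloring that encodes all relational information relative to $F$. Since the reduct involves only finitely many relations and these have bounded arity, the coloring can be made finite.

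More precisely, let $m = \max\{n_i : i \in I'\}$. For each $k$ with $1 \leq k \leq m$, I will define a coloring of the $k$-element subsets of $E \setminus F$. Call a \emph{template of type $k$} a triple $\tau = (i, r, \phi)$ where $i \in I'$, $r = n_i$, and $\phi : \{1, \ldots, r\} \to F \cup \{1, \ldots, k\}$ is a map whose image contains $\{1, \ldots, k\}$. Given a $k$-subset $Y = \{x_1 < \cdots < x_k\}$ of $E \setminus F$ enumerated in the order induced by $\leq$, and a template $\tau$, define $\tau(Y) \in \{0,1\}$ to be the truth value of $\rho_i(a_1, \ldots, a_r)$, where $a_j = \phi(j)$ if $\phi(j) \in F$ and $a_j = x_{\phi(j)}$ otherwise. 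The color $c_k(Y)$ is the function $\tau \mapsto \tau(Y)$ ranging over all templates of type $k$. Since $F$, $I'$ and the arities are all finite, the set of templates is finite, hence so is the set of colors.

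Next, applying Ramsey's theorem successively for $k = 1, \ldots, m$ and taking nested infinite monochromatic subsets, I obtain an infinite $X \subseteq E \setminus F$ which is monochromatic for every coloring $c_k$. I claim $X$ is chainable in $R^{I'}_{\restriction F \cup X}$ by the induced order $\leq$. Indeed, let $\sigma$ be a local isomorphism of $(X, \leq)$, i.e.\ an order-preserving bijection between two finite subsets $Y, Y' \subseteq X$, and extend $\sigma$ by the identity on $F$ to $\tilde\sigma$. Given any tuple $(a_1, \ldots, a_r)$ corresponding to a relation $\rho_i$ with coordinates in $F \cup Y$, let $\{y_1 < \cdots < y_k\}$ be the subset of $Y$-coordinates. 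Then $(a_1, \ldots, a_r)$ is described by some template $\tau$ applied to $\{y_1, \ldots, y_k\}$, and its image under $\tilde\sigma$ is described by the same $\tau$ applied to $\{\sigma(y_1), \ldots, \sigma(y_k)\}$, which is still an increasing enumeration since $\sigma$ preserves $\leq$. By monochromaticity of $c_k$, the two truth values coincide, so $\tilde\sigma$ preserves $\rho_i$ on this tuple; since the tuple and relation were arbitrary, $\tilde\sigma$ is a local isomorphism of $R^{I'}_{\restriction F \cup X}$.

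The only real obstacle is bookkeeping: one must design the templates so that they capture exactly the configurations that must be preserved by the extensions of order-preserving maps on $X$ by the identity on $F$, and confirm that the set of colors is genuinely finite. Once the templates are in place, the conclusion follows by a standard nested application of the finite Ramsey theorem.
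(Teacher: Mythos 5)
Your proof is correct and is the standard Ramsey-theoretic argument for Fra\"{\i}ss\'e's chainability lemma: the paper itself gives no proof of Theorem \ref{theo:chainability} (it cites Fra\"{\i}ss\'e), and your template coloring of $k$-subsets relative to $F$, followed by $m$ nested extractions of infinite monochromatic sets, is exactly the intended argument, with the verification that order-preserving maps extended by the identity on $F$ preserve each $\rho_i$ carried out correctly. The only nit is your closing reference to the ``finite Ramsey theorem'': what you need, and what you actually use in the body, is the infinite Ramsey theorem (finitely many colors on $k$-subsets of an infinite set yield an infinite monochromatic subset).
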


  From Item (c) of Proposition \ref{prop: strong monomorphic component}, the existence of a finite monomorphic decomposition is equivalent to the existence of a finite strongly monomorphic decomposition; this is also equivalent to the existence of a linear order on $E$ and a partition of $E$ into finitely many intervals such that every partial map which preserves the order on each interval  is a local isomorphism of $R$.

\medskip

\medskip

We now come to a key tool we will use to estimate the number of siblings.
 
\begin{lemma}\label{lem:size component} Let $R$ be a relational structure with domain $E$  and $n\in \N$.  Then: 
\begin{enumerate}
\item The equivalence relations defining the monomorphic components of $R$ are preserved by every member of $Aut(R)$.
\item  If the number of orbits of singletons w.r.t. $Aut (R)$ is finite then the set $S$ of  integers $k$ such that some monomorphic component has cardinality $k$ is finite.  
\item If $R$ is $1$-homogeneous (that is, two elements $x, y$ such that $R_{\restriction \{x\}}$ and $R_{\restriction \{y\}}$ are isomorphic belong to the same orbit), then    the orbit of any $x\in E$ is a  union of monomorphic components of $R$, and all those components have the same cardinality. 
\item If $R$ is prehomogeneous, then  every  infinite monomorphic component is contained in the orbit of some singleton.
\item If $R$ is homogeneous, then the equivalences $\simeq_R$ and $\simeq_{Aut(R)}$ coincide. 
\end{enumerate}
\end{lemma}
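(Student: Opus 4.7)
My plan is to tackle the five items roughly in order of difficulty: item (1) sets up the main tool; (2) and (5) follow from it with little extra work; (3) combines (1) with $1$-homogeneity; and (4) will be the main obstacle.

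For (1), I would prove directly that for any $\sigma \in \Aut(R)$, $x \simeq_R y$ implies $\sigma(x) \simeq_R \sigma(y)$: given a finite $F \subseteq E \setminus \{\sigma(x), \sigma(y)\}$, set $F_0 := \sigma^{-1}(F)$, which is disjoint from $\{x,y\}$, and transport the isomorphism $R_{\restriction \{x\} \cup F_0} \cong R_{\restriction \{y\} \cup F_0}$ along $\sigma$ to obtain $R_{\restriction \{\sigma(x)\} \cup F} \cong R_{\restriction \{\sigma(y)\} \cup F}$. As an immediate consequence, each $\sigma$ permutes the monomorphic components, preserving their cardinalities, and (2) drops out: elements in a common $\Aut(R)$-orbit of singletons have monomorphic components of the same cardinality, so the set $S$ is bounded by the (finite) number of such orbits. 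For (5), the direction $\simeq_{\Aut(R)} \Rightarrow \simeq_R$ is trivial; the converse uses homogeneity to extend any witnessing isomorphism $R_{\restriction \{x\} \cup F} \to R_{\restriction \{y\} \cup F}$, which is a finite local isomorphism, to a global automorphism that maps $\{x\} \cup F$ onto $\{y\} \cup F$ setwise.

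For (3), taking $F = \emptyset$ in the definition of $\simeq_R$ shows $R_{\restriction \{x\}} \cong R_{\restriction \{y\}}$ whenever $x, y$ lie in the same monomorphic component $C$; by $1$-homogeneity, $x$ and $y$ then share an $\Aut(R)$-orbit, so $C$ is contained in a single orbit. Combined with (1), the orbit of $x$ equals $\bigcup_{\sigma \in \Aut(R)} \sigma(C)$, a union of monomorphic components each of cardinality $|C|$.

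The main obstacle is item (4): we have only prehomogeneity in place of $1$-homogeneity, so the argument of (3) does not transfer directly. My plan is to invoke Proposition 4.5(f), which says an infinite monomorphic component $C$ is a chainable part: there is a linear order $\leq$ on $C$ such that every order-local-isomorphism of $(C, \leq)$, extended by the identity on $E \setminus C$, is a local isomorphism of $R$. Fix $x, y \in C$; the map $x \mapsto y$ is trivially a local isomorphism on $\{x\}$, and applying prehomogeneity to $\{x\}$ yields a finite $\overline{F} \supseteq \{x\}$ such that any extension of $x \mapsto y$ to a local isomorphism on $\overline{F}$ extends to a global automorphism. To produce such an extension, write $\overline{F} \setminus \{x\} = Y_1 \sqcup Y_2$ with $Y_1 \subseteq C$ and $Y_2 \subseteq E \setminus C$; extend by the identity on $Y_2$, and use the infiniteness of $(C, \leq)$ to realize the $\leq$-order-type of $\{x\} \cup Y_1$ with $x$ placed at $y$ somewhere in $C$. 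The delicate step is guaranteeing that $(C, \leq)$ admits such a placement for arbitrary $x, y$; I would handle this by arranging that $\leq$ is sufficiently homogeneous on $C$ — for instance densely ordered without endpoints — which can be arranged by applying the chainability-by-$\Q$ apparatus behind Theorem 3.4 to the monomorphic structure induced on $C$.
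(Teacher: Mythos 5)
Items (1), (2), (3) and (5) of your proposal are correct and follow essentially the same route as the paper; your explicit transport-of-structure argument for (1) is a reasonable substitute for the paper's one-line "definable, hence preserved" remark, and (2), (3), (5) then go exactly as in the paper.

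The genuine gap is in item (4), precisely at the step you yourself flag as delicate. Your strategy is to send $x$ \emph{directly} to $y$: you need an order-preserving injection of $\{x\}\cup Y_1$ into $(C,\leq)$ placing $x$ at $y$, and for that you propose to arrange that $\leq$ is dense without endpoints. But under the hypothesis of item (4) — prehomogeneity alone — this is not available. The chainability of an infinite component (Proposition on strongly monomorphic parts, item (f)) only yields \emph{some} chain $\leq$ on $C$ compatible with $R$ over $E\setminus C$; upgrading it to a dense chain is exactly what Proposition \ref{compprehomogeneous}(b) does, and that argument uses the \emph{additional} hypothesis that $\Aut(R)$ is oligomorphic (the paper explicitly states it does not know whether oligomorphy can be dropped there). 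Equally, Theorem \ref{freely rational} gives chainability by $\Q$ only for structures that are monomorphic \emph{and uniformly prehomogeneous}, which the induced structure on $C$ need not be; and even if $R_{\restriction C}$ were chainable by $\Q$ as a structure in its own right, that would say nothing about compatibility with the identity on $E\setminus C$, which is what the argument needs. Concretely, if $\leq$ has order type $\omega$ and $y$ is its least element while $Y_1$ contains a point below $x$, no admissible placement of $x$ at $y$ exists, so the direct map $x\mapsto y$ cannot be extended as you propose.

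The paper's proof of (4) avoids this by not insisting on reaching $y$ from $x$ in one step. With $F_x$ the prehomogeneity witness for $\{x\}$ and $F'_x=F_x\cap C$, every order-preserving placement of $F'_x$ into $(C,\leq)$, extended by the identity on $F_x\setminus C$, is a local isomorphism of $R$ extending $x\mapsto x'$ to $F_x$, hence by prehomogeneity $x'$ lies in the orbit of $x$. The points $x'$ that cannot be reached this way form at most an initial plus a final segment of $C$ of total size $\vert F'_x\vert-1$ (one needs enough room on each side of $x'$ to accommodate $F'_x$). Doing the same for $y$ and using that $C$ is infinite, the two cofinite "reachable" sets meet, so $x$ and $y$ lie in a common orbit. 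This meet-in-the-middle argument works for an arbitrary chain on $C$ and is the repair your item (4) needs.
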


\begin{proof} (1) Being definable (by infinitary formulae), the monomorphic decomposition is preserved under automorphisms. (2) If  two elements are in the same orbit of $Aut(R)$, then  the monomorphic component  containing $x$ and the monomorphic component containing $y$  have  the same size, hence (2) follows. (3) If two elements  are in the same monomorphic component, the restriction of $R$ to these elements are isomorphic;  if  $R$ is $1$-homogeneous, then there is some automorphism carrying one onto the other. (4) Let $x$, $y$ be in the same monomorphic component $X$. Since $R$ is prehomogeneous, there is some finite set $F_x$ containing $\{x\}$ such that every local isomorphism defined on $\{x\}$ which can be extended to $F_x$ can be extended to an automorphism. Now, since $X$ is infinite, then by Proposition \ref{prop: strong monomorphic component} $X$ is strongly monomorphic  and hence chainable over $E$ by some chain $(X, \leq)$. Set $F'_x= F_x\cap X$. Now, every local isomorphism of $(X, \leq)$ defined on $X$ and extendable by the identity on $E\setminus X$ will carry $x$ onto some $x'$ belonging to the same orbit, and the set $S_x$ of elements of $X$ which  cannot be attained from $x$ in this manner (if any) is by chainability the union of an initial interval and a final interval of $X$ whose size is at most $\vert F'_x\vert-1$. The same reasoning with $y$ in place of $x$ yields a set $F'_y$ of size at most $\vert F'_y\vert-1$. Since those sets are finite and $X$ is infinite, there are elements which can be reached from $x$ and $y$, and  hence $x$ can the transformed to $y$ by some automorphism as required. (5) The fact that $\simeq_{Aut(R)}$ is included in   $\simeq_R$ holds with no condition on $R$; the homogeneity of $R$ is used for the converse.   \end{proof}

\begin{remark}
Consider, as a comparative example, the direct sum of infinitely many copies of a $2$-element chain. It  is uniformly prehomogeneous, and the automorphism group has two orbits of singletons: the set of maximal elements and the set of minimal elements. The monomorphic components are the $2$-element chains, and none is contained in  an orbit. 
\end{remark}

We now revisit the action of a group on a set. Let $G$ be a permutation group acting on a set $E$. For $A$ a subset of $E$, we denote by $G_{\underline {A}}$, resp. $G_{A}$,  the   pointwise, resp. setwise stabilizer  of $A$. If $G$ leaves $A$ globally invariant (i.e., $G=G_{A}$), we  set $G{\restriction A}= \{\sigma {\restriction A}: \sigma\in G\}$. 

\medskip

We first deal with prehomogeneous structures.

\begin{proposition} \label{compprehomogeneous}
 Let  $R$ be  a  prehomogeneous structure on a countable set $E$, $G=Aut(R)$, and let $A$ be an infinite monomorphic component of $R$ with $B=E \setminus A$ its complement. Then 
\begin{enumerate}[{(a)}]
\item $G_{B} {\restriction A}$ is a  monomorphic group; 
\item If $G$ is oligomorphic, then $G_{\underline B}{\restriction A}$ is also oligomorphic,  and 
 there  is a dense linear order on $A$ such that for $C= (A, \leq)$,  $Aut(C)\subseteq G_{\underline B}{\restriction  A}$ and hence $G_{\underline B}{\restriction  A}$ is monomorphic.
\end{enumerate} 
\end{proposition}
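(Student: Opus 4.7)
The overall strategy is to combine strong monomorphy of $A$ (from Proposition \ref{prop: strong monomorphic component}(f)) with prehomogeneity of $R$ in order to transport finite subsets of $A$ by automorphisms of $R$ that preserve $A$ setwise (for (a)) or fix $B$ pointwise (for (b)).

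For (a): given finite $A_1,A_2\subseteq A$ with $|A_1|=|A_2|=n$, Proposition \ref{prop: strong monomorphic component}(f) supplies a linear order $\leq$ on $A$ such that every local isomorphism of $(A,\leq)$ extended by $\Id_B$ is a local isomorphism of $R$; in particular, the order-preserving bijection $g:A_1\to A_2$ extended by $\Id_B$ is a local isomorphism of $R$. I then apply prehomogeneity of $R$ to $A_1$ to obtain a finite $F'\supseteq A_1$ and extend $g$ to $F'$ by taking identity on $F'\cap B$ and an order-preserving extension within $F'\cap A$ (using chainability). By prehomogeneity, this extends to an automorphism $\sigma\in G$. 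Since $\sigma(A_1)\subseteq A$ and $\sigma$ permutes monomorphic components (Lemma \ref{lem:size component}(1)), we have $\sigma(A)=A$, so $\sigma\in G_B$ and $(\sigma\restriction A)(A_1)=A_2$, proving that $G_B\restriction A$ is monomorphic.

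For (b): the extra hypothesis that $G$ is oligomorphic upgrades $R$ to uniformly prehomogeneous (Theorem \ref{thm:unifprehomtest}), so every local isomorphism with finite domain is a local $1$-isomorphism. This stronger extension property permits a back-and-forth argument extending $g\cup \Id_B$ to a full automorphism $\sigma\in\Aut(R)$ with $\sigma\restriction A_1=g$ and $\sigma\restriction B = \Id$: maintaining identity on $B$ at each stage automatically forces $\sigma(A)\subseteq A$, and strong monomorphy of $A$ (which makes every element of $A$ realize the same type over any finite subset of $B$) supplies at each step an element of $A$ realizing the needed type. Thus $\sigma\in G_{\underline B}$ with $\sigma(A_1)=A_2$, so $G_{\underline B}\restriction A$ is monomorphic and a fortiori oligomorphic. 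For the dense chain, chainability yields $\Aut(A,\leq)\subseteq G_{\underline B}\restriction A$ for any chain $(A,\leq)$ from Proposition \ref{prop: strong monomorphic component}(f), and the monomorphy just established forces $(A,\leq)$ to be of type $\Q$, since a chain of other type would have a non-monomorphic automorphism group.

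The main obstacle is the back-and-forth construction of (b): at each stage one must find in $A$ a target realizing the correct type over the current finite fragment of $B$ together with the already-matched elements of $A$. Strong monomorphy is the crucial input, guaranteeing that the types of elements of $A$ over finite subsets of $B$ are all equal and infinitely realized inside $A$, so the local $1$-isomorphism property of uniform prehomogeneity can be applied while keeping the image within $A$. A related delicate point is the justification, for the extension step in (a) and ultimately for the density of $(A,\leq)$, that prehomogeneity rules out rigid chain types such as $\omega$ or $\Z$ on infinite monomorphic components, since the resulting ``shift'' local isomorphisms would fail to extend to global automorphisms of $R$.
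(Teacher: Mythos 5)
There is a genuine gap, and it occurs at the same point in both parts: you assume that an order\nobreakdash-preserving partial map of the chain $(A,\leq)$ furnished by strong monomorphy/chainability can be extended order\nobreakdash-preservingly to an arbitrary finite superset inside $A$. In (a), you need to extend the order isomorphism $g\colon A_1\to A_2$ to $F'\cap A$; but if, say, $(A,\leq)$ has type $\omega$ and $\min A_2$ is the least element of the chain while $F'\cap A$ contains points below $\min A_1$, no order\nobreakdash-preserving extension into $A$ exists. Nothing in the hypotheses guarantees that $(A,\leq)$ is dense without endpoints --- that is essentially the \emph{conclusion} of part (b), and only under the extra hypothesis of oligomorphy. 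Your proposed patch (``prehomogeneity rules out rigid chain types such as $\omega$ or $\Z$'') is unsubstantiated: a shift of the chain extended by $\Id_B$ is a local isomorphism of $R$, but prehomogeneity only promises an extension to \emph{some} automorphism of $R$, which need not respect the chain or fix $B$. Indeed, if your argument worked it would produce an automorphism fixing $B$ \emph{pointwise} under mere prehomogeneity, which the paper explicitly states the authors were unable to obtain. The paper's proof of (a) is built precisely to dodge this: it chooses a third $n$-set $F$ placed far enough inside the infinite chain that \emph{both} $\overline F_1\cap A$ and $\overline F_2\cap A$ admit order embeddings into $A$ carrying $F_i$ onto $F$; prehomogeneity then yields automorphisms $\sigma_i$ with $\sigma_i(F_i)=F$, and $\sigma_2^{-1}\circ\sigma_1$ preserves $A$ setwise because automorphisms preserve $\simeq_R$. (Your last step --- $\sigma(A_1)\subseteq A$ forces $\sigma\in G_B$ --- is correct and is the same as the paper's.)

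The same circularity undermines (b). Your back\nobreakdash-and\nobreakdash-forth must, at each step, find inside $A$ an element in a prescribed order position relative to the finite set already matched (e.g.\ below all of $f(A')$); the existence of such elements in every position is exactly the density of the chain, which you only derive \emph{after} the back\nobreakdash-and\nobreakdash-forth. Knowing that all singletons of $A$ have the same type over finite subsets of $B$ is far weaker than what the recursion needs. The paper's route is genuinely different and avoids this: it equips an abstract extension $A'_i\supseteq A_i$ of each infinite component with a dense order extending $\leq_i$, builds a $1$\nobreakdash-extension $R'$ of $R$ with the same age that is existentially closed, invokes oligomorphy ($\aleph_0$\nobreakdash-categoricity) to conclude $R'\cong R$ since the countable existentially closed structure of that age is unique, and transports the dense orders back to the $A_i$; monomorphy of $G_{\underline B}\restriction A$ then falls out of $\Aut(A,\leq)\subseteq G_{\underline B}\restriction A$ for the dense $\leq$, rather than the other way around. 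Your closing inference (``monomorphy forces $(A,\leq)$ to be of type $\Q$'') is also not valid as stated --- monomorphy of $G_{\underline B}\restriction A$ says nothing about the order type of one particular chaining order --- though it could be repaired by citing Theorem \ref{freely rational} for the closed monomorphic group $G_{\underline B}\restriction A$, if the monomorphy had been established first.
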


\begin{proof} 
$(a)$. We must show that for every integer $n\geq 1$, if $F_1$ and $F_2$ are two $n$-element subsets of $A$, then there is some $\sigma \in G_{B}{\restriction A}$ which carries $F_1$ onto $F_2$. Let $\overline F_i$, finite, containing  $F_i$  be such that local embeddings  defined on $F_i$ which extend to $\overline F_i$ extend to automorphisms of $R$.  Since $A$ is an infinite monomorphic component, hence strongly monomorphic by Proposition \ref{prop: strong monomorphic component}, there is a linear order $\leq$ on $A$ such that finite local isomorphisms of $(A, \leq)$ extend by the identity on $B$ to local isomorphisms of $R$. Since $A$ is infinite, we may find an $n$-element subset $F$ in $A$ such that, via order preserving mappings on $\overline F_i \cap A$, $F_i$ is carried to $F$ by an isomorphism which extends to a local isomorphism fixing $B$ pointwise (and in particular $\overline F_i\cap B$). Since $R$ is prehomogeneous, this provides an automorphism $\sigma_i$ which carries $F_i$ to $F$ for $i=1,2$. Now, $\sigma_2^{-1}\circ \sigma_1$ carries $F_1$ to $F_2$ and is an automorphism. Since automorphisms must preserve the equivalence relation $\simeq_R$ and $F_i\subseteq A$, for $i=1,2$, this automorphism fixes $A$ set wise. Hence it fixes $B$ set wise, that is belongs to $G_{B}$.    Without invoking a stronger condition, e.g.,  oligomorphic action as in $(b)$,  we have not been able to show that there is an automorphism carrying $F_1$ on $F_2$ and fixing $B$ pointwise.

$(b)$ Once we know that $G_{\underline B}{\restriction A}$ is oligomorphic, the existence of a dense order on $A$ follows from Theorem \ref{freely rational}. In fact, we prove directly the existence  of a dense order as follows. On each infinite component $A_i$, we may put a linear order $\leq_i$ in such a way that the local isomorphisms of $(A_i, \leq_i)$ extended by the identity on the complement of $A_i$  are local isomorphism of $R$ (Proposition  \ref{prop: strong monomorphic component}). Extend each infinite component $A_i$ to a set $A'_{i}$ and $\leq_i$ to a dense order $\leq'_i$ in such a way that for distinct $i$'s the $A'_i$'s  are disjoint. We may extend $R$ to a relation $R'$ on $E'= E\cup \bigcup_i A'_i$ in such a way that any $1-1$ map of finite domain $F \subseteq E'$ which sends each $A'_i\cap F$ into $A_i$ and respect the order and fixes all other elements is a local isomorphism from $R$ into $R'$. The extension $R'$ has the same age as $R$. Since $R$ is prehomogeneous, $R'$ is a $1$-extension of $R$. Furthermore, if $R''$ is an extension of $R'$ with the same age this is a $1$-extension, that is  $R'$ is existentially closed. Since $G$ is oligomorphic, $R$ is the unique countable existentially closed structure  for its age, hence $R'$ is isomorphic to $R$. Clearly, $Aut(A'_i , \leq'_i)\subseteq  Aut (R'_{\underline B_i'}{\restriction  A'_i})$. Any isomorphism will transform the $A'_i$'s into the $A'_i$'s, and hence the image of the dense orders will give dense orders on the $A_i$'s with the required property. \end{proof}

Clearly, the fact that $G_{\underline B}{\restriction A}$ is monomorphic implies that $G_{ B}{\restriction A}$ is monomorphic. We do not know if the hypothesis of oligomorphy is really needed to prove the converse. In Proposition \ref{lem:notfull}, we show that the fact that $R$ is homogeneous suffices. %
%
%
%

\medskip

The following properties are folklore and straightforward.

 \begin{lemma}\label{lem:groupaction}
Let $G$ be a  group acting on a set $E$. If $G$ is closed in $\mathfrak S (E)$ then, for every subset $A$ of $E$, the groups $G_{A}$ and $G_{\underline A}$ are closed in $\mathfrak S(E)$ and   the  group $G_{\underline A}{\restriction (E\setminus A)}$ is  closed in $\mathfrak S(E\setminus A)$.  Provided that   $A$ or $E\setminus A$ is finite,  the group  $G_{A}{\restriction (E \setminus A)}$ is closed in $\mathfrak S(E\setminus A)$. 
\end{lemma}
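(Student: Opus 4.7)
I would work throughout with the topology of pointwise convergence on $\mathfrak S(E)$, the set $E$ carrying the discrete topology. In this topology every sub-basic open set $\{g:g(a)=b\}$ is in fact clopen (its complement, fixing $a$, is a union of the other sub-basic opens at $a$), and since inversion is continuous the analogous sets $\{g:g^{-1}(a)=b\}=\{g:g(b)=a\}$ are clopen too. Consequently, for each $a\in E$ both $\{g:g(a)\in A\}=\bigcup_{b\in A}\{g:g(a)=b\}$ and $\{g:g^{-1}(a)\in A\}$ are unions of clopens with open complement, hence clopen. From this the two stabilizer statements are handled immediately, as
\[ G_{\underline A}=G\cap\bigcap_{a\in A}\{g:g(a)=a\}, \qquad G_{A}=G\cap\bigcap_{a\in A}\bigl(\{g:g(a)\in A\}\cap\{g:g^{-1}(a)\in A\}\bigr), \]
each being the intersection of the closed group $G$ with a closed set.

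For $G_{\underline A}{\restriction(E\setminus A)}$, I would exploit that every element of $G_{\underline A}$ fixes $A$ pointwise, so any $f\in\mathfrak S(E\setminus A)$ extends canonically to $\tilde f\in\mathfrak S(E)$ by $\tilde f{\restriction A}=\Id_A$. If $f$ lies in the closure of $G_{\underline A}{\restriction(E\setminus A)}$ and $F\subseteq E$ is finite, approximating $f{\restriction}(F\cap(E\setminus A))$ by some $g\in G_{\underline A}$ gives $g{\restriction F}=\tilde f{\restriction F}$ (the points of $F$ inside $A$ being fixed by both). Hence $\tilde f$ lies in the closure of $G_{\underline A}$, and so in $G_{\underline A}$ by the preceding paragraph; restricting back yields $f\in G_{\underline A}{\restriction(E\setminus A)}$.

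The main obstacle is the last assertion: $G_A{\restriction(E\setminus A)}$ closed when $A$ or $E\setminus A$ is finite. If $E\setminus A$ is finite, $\mathfrak S(E\setminus A)$ is itself discrete and there is nothing to prove. If $A$ is finite, the canonical-extension trick is unavailable because members of $G_A$ may act non-trivially on $A$; however the finiteness of $\mathfrak S(A)$ makes a pigeonhole argument available. Given $f$ in the closure of $G_A{\restriction(E\setminus A)}$, define for each $\sigma\in\mathfrak S(A)$
\[ \mathcal F_\sigma=\bigl\{F\subseteq E\setminus A\text{ finite} : \exists\,g\in G_A \text{ with }g{\restriction F}=f{\restriction F}\text{ and }g{\restriction A}=\sigma\bigr\}. \]
Each $\mathcal F_\sigma$ is closed under taking subsets, and every finite $F\subseteq E\setminus A$ lies in at least one of the finitely many $\mathcal F_\sigma$ (take any $g\in G_A$ approximating $f$ on $F$ and let $\sigma=g{\restriction A}$). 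If no single $\sigma$ contained every such $F$, then choosing excluded witnesses $F_\sigma\notin\mathcal F_\sigma$ and setting $F_0=\bigcup_\sigma F_\sigma$ gives a contradiction: any $g\in G_A$ with $g{\restriction F_0}=f{\restriction F_0}$ has $g{\restriction A}$ equal to some $\sigma$, whence $g$ witnesses $F_\sigma\in\mathcal F_\sigma$. So some $\sigma_0$ works uniformly, and the extension $\tilde f=f\cup\sigma_0\in\mathfrak S(E)$ lies in the closure of $G_A$, hence in $G_A$, giving $f\in G_A{\restriction(E\setminus A)}$. I expect this pigeonhole step to be the only substantive point; everything else reduces to the clopen nature of cylinder sets and routine extension by the identity.
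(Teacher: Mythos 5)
Your proof is correct and complete; the paper itself offers no argument here, dismissing the lemma as ``folklore and straightforward,'' so there is nothing to diverge from. The cylinder-set computations for $G_A$ and $G_{\underline A}$ and the identity-extension for $G_{\underline A}{\restriction (E\setminus A)}$ are the standard route, and your pigeonhole over the finitely many $\sigma\in\mathfrak S(A)$ correctly isolates and handles the one genuinely non-trivial point (the case $A$ finite), where the lemma's finiteness hypothesis is actually used.
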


\begin{remark}Without some condition on $A$, $G_{A}{\restriction (E \setminus A)}$ is not necessarily closed. For an example, let $R= (\Q, \leq, U)$ where $\leq$ is the natural order on the rationals and $U$ is a unary relation which divides the rationals into two dense sets. Let $G= Aut(R)$ and $A=\{ x\in \Q: U(x)= 1\}$. Then $G_{\restriction A}$ is different from $\overline{G_{\restriction A}}^{\mathfrak S}$, the closure of $G_{\restriction A}$  into $\mathfrak S$. Indeed, since $R$ is homogeneous, the latter group is equal to $Aut(R_{\restriction A})$. This group contains permutations which cannot extend to $\Q$; indeed if we choose $q$ with $U(q)=0$ and an irrational $r$ we may find  $\sigma \in Aut(R_{\restriction A})$ whose  extension  carries $q$ to $r$; this map $\sigma$ cannot be extended to $\Q$. In this example,  $G_{\restriction A}$ is monomorphic, hence oligomorphic, but not closed. The set $A$ is invariant under the action of $G$, but it is not a monomorphic component of $R$; in fact we may separate every pair of distinct elements $x$ and $y$ by some subset $F$ of $\Q$ with at most two elements.
\end{remark}

However, there is a powerful duality for a permutation group acting on two globally invariant sets.

\begin{lemma}\label{lem:group}  Let  $G$ be a permutation group acting on a set $E$ which is the union of two disjoint sets $A_0$ and $A_1$, leaving each of these sets globally invariant. Then the subgroup $H$ of $G$  generated by $\bigcup_{i<2}G_{\underline {A}_{i}}$ is a normal subgroup of $G$;  the group $G_{\underline {A}_{1-i}}{\restriction A_{i}}$ is a normal subgroup of $G{\restriction A_{i}}$ for every $i<2$;  and if   
 $H_i$ denotes the quotient of $G{\restriction A_{i}}$ by $G_{\underline {A}_{1-i}}{\restriction A_{i}}$, then $H_0$ and $H_1$ are isomorphic to the quotient of $G$ by $H$. \end{lemma}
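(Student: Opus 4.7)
The plan is to read everything through the natural restriction homomorphism $\pi_i : G \to \mathfrak{S}(A_i)$ defined by $\pi_i(\sigma) = \sigma{\restriction A_i}$, which is well-defined because $G$ leaves each $A_i$ globally invariant. Its kernel is $G_{\underline{A}_i}$ and its image is $G{\restriction A_i}$, so $G{\restriction A_i} \cong G/G_{\underline{A}_i}$. The statement should then collapse to routine diagram-chasing once I establish two preliminary facts about the pointwise stabilizers.

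First I would show that each $G_{\underline{A}_i}$ is a normal subgroup of $G$. Indeed, for $\sigma \in G$ and $\tau \in G_{\underline{A}_i}$, the conjugate $\sigma\tau\sigma^{-1}$ fixes $\sigma(A_i)$ pointwise, and $\sigma(A_i)=A_i$ by global invariance. Next I would show that $G_{\underline{A}_0}$ and $G_{\underline{A}_1}$ commute elementwise: if $\sigma \in G_{\underline{A}_0}$ and $\tau \in G_{\underline{A}_1}$, then on $A_0$ both $\sigma\tau$ and $\tau\sigma$ act as $\tau{\restriction A_0}$ (since $\sigma$ fixes $A_0$ pointwise and $\tau(A_0)=A_0$), and symmetrically on $A_1$ both equal $\sigma{\restriction A_1}$. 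Consequently $H = G_{\underline{A}_0}\cdot G_{\underline{A}_1}$ as a set, and being the product of two normal subgroups of $G$, it is a normal subgroup of $G$. This settles the first assertion.

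For the second assertion, I would apply $\pi_i$ to the normal subgroup $G_{\underline{A}_{1-i}} \trianglelefteq G$: its image $\pi_i(G_{\underline{A}_{1-i}}) = G_{\underline{A}_{1-i}}{\restriction A_i}$ is normal in $\pi_i(G) = G{\restriction A_i}$, since surjective homomorphisms preserve normality.

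For the third assertion, I would compose $\pi_i$ with the quotient map $G{\restriction A_i} \twoheadrightarrow H_i$ to obtain a surjective homomorphism $\psi_i : G \twoheadrightarrow H_i$. An element $\sigma \in G$ lies in $\ker \psi_i$ iff $\sigma{\restriction A_i}$ equals $\tau{\restriction A_i}$ for some $\tau \in G_{\underline{A}_{1-i}}$, i.e.\ iff $\sigma\tau^{-1} \in G_{\underline{A}_i}$, i.e.\ iff $\sigma \in G_{\underline{A}_i}\cdot G_{\underline{A}_{1-i}} = H$. The first isomorphism theorem then gives $H_i \cong G/H$ for both $i=0,1$, yielding $H_0 \cong H_1 \cong G/H$. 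I expect no real obstacle here: the only point requiring any care is the commutativity of $G_{\underline{A}_0}$ and $G_{\underline{A}_1}$, which in turn relies crucially on $E = A_0 \cup A_1$ being a disjoint union so that every point lies in the fixed set of at least one of the two stabilizers.
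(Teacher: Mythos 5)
Your proposal is correct and follows essentially the same route as the paper: both arguments work through the restriction homomorphism $\varphi_i(f)=f{\restriction A_i}$ with kernel $G_{\underline A_i}$, use the elementwise commutation of $G_{\underline A_0}$ and $G_{\underline A_1}$ (which depends on $E=A_0\cup A_1$ being a disjoint union) to identify $H$ with the product $G_{\underline A_0}\cdot G_{\underline A_1}$ and deduce its normality, and then compute that the preimage of $G_{\underline{A}_{1-i}}{\restriction A_i}$ under $\varphi_i$ is exactly $H$, which is your kernel computation for $\psi_i$. Your packaging via standard facts about images and preimages of normal subgroups under surjective homomorphisms is slightly more streamlined than the paper's explicit element-by-element verifications, but the mathematical content is identical.
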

\begin{proof}
  Let $\varphi_i: G\rightarrow G{\restriction A_i}$ defined by setting $\varphi_i(f)= f{\restriction A_i}$. Then $ker (\varphi)=G_{\underline {A}_{i}}$. Hence the quotient $G/ G_{\underline {A}_{i}}$ is isomorphic to $G_{\restriction A_i}$. Now  $G_{\underline {A}_{0}}$ and $G_{\underline {A}_{1}}$ commute, and in particular $H$ is isomorphic to the product $G_{\underline {A}_{0}}\times G_{\underline {A}_{1}}$.  It follows that $H$ is a normal subgroup of $G$ (for $f\in G$ and $h=h_0\circ h_1 \in H$ with $h_i\in G_{\underline {A}_{i}}$, ($i<2$), let  $f'=f^{-1}\circ h\circ f= f^{-1}\circ h_0\circ f\circ f^{-1} \circ h_1\circ f$; since $ G_{\underline {A}_{i}}$ is normal in $G$, it contains  $f^{-1}\circ h_i\circ f$,   thus $f'\in H$). Thus the quotient  $G/H$ is unambiguously defined. Next,  $G_{\underline {A}_{1-i}}{\restriction A_{i}}$ is a normal subgroup of $G{\restriction A_{i}}$. Indeed, we only need to check that $h^{-1}\circ G_{\underline {A}_{1-i}}{\restriction A_{i}}\circ  h\subseteq G_{\underline {A}_{1-i}}$ for every $h$ in $G{\restriction A_{i}}$. For that, let $g\in G_{\underline {A}_{1-i}}{\restriction A_{i}}$. Let $\underline g\in   G_{\underline {A}_{1-i}}$ such that $\underline g{\restriction A_{i}}=g$ and let $h'\in G$ such that $h'_{\restriction A_{i}}= h$. We have readily $h'^{-1}\circ \underline g\circ h'\in G_{\underline {A}_{1-i}}$. Hence $H_i$ is unambiguously defined.
  With the notation of Lemma \ref{lem:group}, we have: \begin{claim}\label{claim:group} For every $f\in G$, the following properties are equivalent: 
\begin{enumerate}[(i)]
\item  $f{\restriction A_0}$ extends to some $g_1\in G_{\underline A_1}$; 
\item  $f{\restriction A_1}$ extends to some $g_0\in G_{\underline A_0}$;
\item $f\in H$. 
\end{enumerate}
 \end{claim}
 \noindent{\bf Proof of Claim \ref{claim:group}.}
$(i)\Rightarrow (ii)$. Set $g_0= g_1^{-1}\circ f$. 
By the same token we have $(ii)\Rightarrow (i)$. 
$(i)\Rightarrow (iii)$. We have $f=g_1\circ g_0$.
$(iii) \Rightarrow (i)$. Immediate. 
 \hfill $\Box$
\begin{claim}\label{claim:group2}
$\varphi_i^{-1}(G_{\underline {A}_{1-i}}{\restriction A_{i}})=H$. 
\end{claim}
 \noindent{\bf Proof of Claim \ref{claim:group2}.}
By symmetry it suffices to prove the case $i=0$.  Let $h\in H$; then $h=h_0\circ h_1$ with $h_i\in G_{\underline {A}_{i}}$, ($i<2$). Hence, 
$\varphi_{0}(h)=\varphi_{0}(h_0\circ h_{1})= \varphi_{0}(h_0)\circ \varphi_{0}(h_{1})=\varphi_{0}(h_1)\in 
G_{\underline {A}_{1}} {\restriction A_{0}}$. Thus, $H \subseteq \varphi_0^{-1}(G_{\underline {A}_{1}}{\restriction A_{0}})$. Conversely, let $f\in \varphi_0^{-1}(G_{\underline {A}_{1}}{\restriction A_{0}})$. Then, $f{\restriction A_{0}}=\varphi_{0}(f) \in G_{\underline {A}_{1}}{\restriction A_{0}}$. Hence, $f{\restriction A_{0}}$ satisfies $(i)$ of Claim \ref{claim:group}, and hence satisfies $(iii)$ as well and $f\in H$. Thus $\varphi_0^{-1}(G_{\underline {A}_{1}}{\restriction A_{0}})\subseteq H$. Consequently, $\varphi_0^{-1}(G_{\underline {A}_{1}}{\restriction A_{0}})=H$, as claimed.
\hfill $\Box$

From Claim \ref{claim:group2} follows that  the quotient $G/H$ is isomorphic to the quotient $H_i$  of $G{\restriction A_{i}}$ by $G_{\underline {A}_{1-i}}{\restriction A_{i}}$. This  proves the lemma. 
\end{proof}

\medskip

We are now in a  position to better describe closed permutation groups with an infinite monomorphic component.

\begin{lemma} \label{lem:notfull} Let  $G$ be a closed permutation group acting on a countable set. Suppose there is an infinite $G$-monomorphic component $A$, and let $B$ be its complement. Then 
\begin{enumerate}[{(a)}]
\item $G_{\underline B}{\restriction A}$, $G_{B} {\restriction A}$ and its closure $\overline {(G_{B} {\restriction A})}^{\mathfrak S}$ are monomorphic groups;
\item  The quotient  of $G_{B}{\restriction A}$ by $G_{\underline {B}}{\restriction A}$ has at most two elements;
\item When that quotient has size $2$, and  $C=(A,\leq)$ is a dense linear order  such that  $Aut(C)\subseteq G_{\underline B}{\restriction  A}$,  then $G_{\underline {B}}{\restriction A}$ and $\overline {(G_B {\restriction A})}^{\mathfrak S}$ are either respectively  equal to  $Aut(C)$ and $Aut(B_C)$,  or else to $Aut(T_C)$ and $Aut(D_C)$.   
\end{enumerate} 
\end{lemma}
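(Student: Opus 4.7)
Since $G$ is closed on the countable set $E$, there is a homogeneous relational structure $R$ on $E$ with $\Aut(R)=G$; in particular $R$ is prehomogeneous. Set $N:=G_{\underline B}{\restriction A}$ and $K:=G_B{\restriction A}$.

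For (a), the monomorphy of $K$ is immediate from Proposition \ref{compprehomogeneous}(a), whose proof requires only prehomogeneity. For $N$, fix $n$-element subsets $F_1,F_2\subseteq A$. By Proposition \ref{prop: strong monomorphic component}(f), $A$ is strongly monomorphic, so there is an isomorphism $\phi:R_{\restriction F_1}\to R_{\restriction F_2}$ such that $\phi\cup\mathrm{id}_B$ is a local isomorphism of $R$ on the (possibly infinite) domain $F_1\cup B$. Using the homogeneity of $R$ and the closedness of $G$ in $\mathfrak{S}(E)$, a back-and-forth construction lifts $\phi\cup\mathrm{id}_B$ to a full automorphism $\sigma\in G$: at each stage one extends the current finite local isomorphism by a new element on the domain or image side, homogeneity supplying the required extensions while the $B$-part of the map is kept equal to the identity, and the countability of $E$ guaranteeing that the process exhausts $E$. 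The resulting $\sigma\in G_{\underline B}$ satisfies $\sigma(F_1)=F_2$, so $N$ is monomorphic, and since monomorphy passes to supergroups $\overline K$ is monomorphic as well.

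For (b), the pointwise stabilizer $N$ is normal in the setwise stabilizer $K$, so $K\subseteq \mathrm{Norm}_{\mathfrak{S}(A)}(N)$. By Lemma \ref{lem:groupaction}, $N$ is closed in $\mathfrak{S}(A)$, and being monomorphic on the countable set $A$ it is, by Cameron's Theorem \ref{thm:cameronthm}, the automorphism group of one of the five structures $(A,=)$, $C$, $B_C$, $T_C$, $D_C$ for some copy $C=(A,\leq)$ on $A$ of the chain of rationals. A reconstruction argument --- each of $C$, $B_C$, $T_C$, $D_C$ is recoverable from its automorphism group up to a canonical reversal --- yields the normalizer in $\mathfrak{S}(A)$ of each of these five groups: $\mathrm{Norm}(\Aut(C))=\Aut(B_C)$ and $\mathrm{Norm}(\Aut(T_C))=\Aut(D_C)$, while $\mathfrak{S}(A)$, $\Aut(B_C)$ and $\Aut(D_C)$ are each self-normalizing. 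Therefore $[K:N]\leq [\mathrm{Norm}_{\mathfrak{S}(A)}(N):N]\leq 2$, establishing (b).

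For (c), if $[K:N]=2$ write $K=N\cup\rho N$ for some $\rho\in K\setminus N$. The coset $\rho N$ is closed as the translate of a closed subgroup, so $K$ is a union of two closed sets, itself closed, giving $\overline K=K=\mathrm{Norm}_{\mathfrak{S}(A)}(N)$. Given a dense chain $C=(A,\leq)$ with $\Aut(C)\subseteq N$, only two of Cameron's possibilities admit a proper index-two normalizer: $N=\Aut(C)$ with $\overline K=\Aut(B_C)$, and $N=\Aut(T_C)$ with $\overline K=\Aut(D_C)$; these are exactly the two alternatives of~(c). The main obstacle in the proof is the normalizer computation itself, which relies on the classical reconstruction --- up to a canonical reversal --- of each of $C$, $B_C$, $T_C$, $D_C$ from its automorphism group.
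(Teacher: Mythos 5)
Your treatment of part (a) for $G_{\underline B}{\restriction A}$ contains a genuine gap, and it is precisely the difficulty the authors flag at the end of the proof of Proposition \ref{compprehomogeneous}(a). Strong monomorphy of $A$ gives you a map $\phi\cup\mathrm{id}_B$ all of whose \emph{finite} restrictions are local isomorphisms of $R$, but homogeneity only extends \emph{finite} local isomorphisms to automorphisms; it says nothing about extending a partial map that already carries the infinite constraint $\mathrm{id}_B$. Concretely, at a back-and-forth step you must choose an image $a'$ for a new point $a\in A$ so that the enlarged map is a local isomorphism on $F\cup\{a\}\cup B_0$ for \emph{every} finite $B_0\subseteq B$ simultaneously. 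Homogeneity gives a nonempty set $S_{B_0}$ of admissible images for each fixed $B_0$, and these sets shrink as $B_0$ grows, but nothing you have said prevents $\bigcap_{B_0}S_{B_0}$ from being empty; closedness of $G$ concerns limits of automorphisms and does not supply the needed compactness. (The remark after Lemma \ref{lem:groupaction}, with $(\Q,\leq,U)$, shows the underlying phenomenon is real: a permutation all of whose finite restrictions extend to automorphisms need not itself extend.) The paper circumvents this by a quite different device: it writes $G_{\underline B}{\restriction A}=\bigcap\{\overline{G_{\underline {B'}}{\restriction A}}^{\mathfrak S}: B'\in[B]^{<\omega}\}$, observes that each term is a closed monomorphic group (only finite pointwise constraints on $B$ are involved, so strong monomorphy plus homogeneity do suffice there), and invokes Lemma \ref{lem:nodescending} --- no infinite descending chain of closed monomorphic groups, a consequence of Cameron's theorem --- to conclude that the intersection is attained, hence closed and monomorphic. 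You need this, or some substitute, before you may feed $N$ into Cameron's classification in (b).

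For (b) and (c) your normalizer argument is a genuinely different route from the paper's, which instead uses the duality of Lemma \ref{lem:group} to identify $G_B{\restriction A}/G_{\underline B}{\restriction A}$ with the quotients $G_F{\restriction B}/G_{\underline F}{\restriction B}$ for finite $F\subseteq A$, proves these indices nonincreasing in $F$ (Claims \ref{claim:setstabilizer} and \ref{claim:pointstabilizer}), and then applies Frasnay's theorem to exclude indices that grow. Your route trades all of that for the computation of $\mathrm{Norm}_{\mathfrak S(A)}(N)$ for each of Cameron's five groups; this is correct and arguably cleaner, but it rests on the reconstruction of $C$, $B_C$, $T_C$, $D_C$ from their automorphism groups, which you should justify (e.g.\ any invariant linear or circular order is a union of orbits on tuples, leaving only the order and its reversal), and you should say a word on why the chain produced by Cameron's theorem may be taken to be the given $C$ of the statement (use $\Aut(C)\subseteq N$ together with Lemma \ref{lem:nodescending}). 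These points are repairable; the gap in (a) is the substantive one.
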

\begin{proof} $(a)$. Let $R$ be an homogeneous structure such that $Aut(R)= G$. Then $A$ is an infinite monomorphic component of $R$. According to Proposition \ref{prop: strong monomorphic component},  this is a strong monomorphic component of $R$. It follows  that, for every finite subset $B'$ of $B$, every integer $n$,  any two  $n$-elements subsets of $A$ are in the same orbit of  $G_{\underline B'} \restriction A$. Hence, each group  $G_{\underline B'} \restriction A$ is monomorphic.  Next, observe that  $G_{\underline B}\restriction A= \bigcap\{\overline {G_{\underline B'} \restriction A}^{\mathfrak S}: B'\in [B]^{<\omega}\}$.  The inclusion $G_{\underline B'}\restriction A\subseteq  \bigcap\{\overline {G_{\underline B'} \restriction A}^{\mathfrak S}: B'\in [B]^{<\omega}\}$ follows immediately from the obvious inclusions $G_{\underline B}\restriction A \subseteq G_{\underline B'} \restriction A\subseteq \overline {G_{\underline B'} \restriction A}^{\mathfrak S}$. The reverse inclusion is immediate: let $\sigma$ be in the above intersection, then  $\sigma$ extended by the identity on $A$ belongs to $G$,  that is $\sigma \in G_{\underline B}\restriction A$ (indeed, for every finite subset $F$ of $A$ and $B'$ finite in $B$, some $\tau \in G_{B'}\restriction A$ coincide with $\sigma$ on $F$). The groups $\overline {G_{\underline B'} \restriction A}^{\mathfrak S}$ are monomorphic and closed. Due to  Cameron's theorem, there is no infinite descending sequence of such groups, (cf Lemma  \ref{lem:nodescending}). Hence, $G_{\underline B}\restriction A= \overline {G_{\underline B'} \restriction A}^{\mathfrak S}$ for some  $B'\in [B]^{<\omega}\}$. This prove that $G_{\underline B}{\restriction A}$ is a closed monomorphic group.  Being overgroups  of that group, the groups  $G_{B} {\restriction A}$ and its closure are also monomorphic. 

$(b)$ and $(c)$. As said, the group $G_{\underline B}{\restriction A}$ is closed (a fact which follows directly from Lemma \ref{lem:groupaction}). By Theorem \ref {freely rational} there is a dense linear order $C= (A, \leq)$ such that $Aut(C)\subseteq G_{\underline B}{\restriction  A}$. 

For each integer $n\in \N$, the $n$-th member of the group sequence of $G_{\underline {B}}{\restriction A}$, resp. $\overline {(G_B{\restriction A})}^{\mathfrak S}$  is the group of permutations of an $n$ element subset $F$ of $A$ induced by members of $G_{\underline {B}}{\restriction A}$, resp. $\overline {(G_B{\restriction A})}^{\mathfrak S}$;  we will feel free to  denote these groups by $G_{\underline {B}}{\restriction F}$, resp. $\overline {(G_B{\restriction F})}^{\mathfrak S}$.  

Now let $F, F'$ be two  finite subsets of $A$ with $F\subseteq F'$. According to Lemma \ref{lem:group},  the  quotient  $H_F=G_{F}{\restriction B}/G_{\underline F}{ \restriction B}$ is isomorphic to the quotient $G_{B} {\restriction F}/G_{\underline B}{\restriction F}$ hence this quotient is finite. We claim that  it  can only decrease when the cardinality of $F$ increases; from Frasnay's result (Theorem \ref{thm:frasnay84}), the cardinalities of members of these two groups sequences is either $1$, $2$ or goes to infinity, and hence in our case this quotient can be only $1$  or $2$, and this will prove (b). When this quotient is constantly $1$,  the groups  $G_{\underline B}{\restriction A}$ and $\overline {(G_B{\restriction A})}^{\mathfrak S}$ are identical.  When this quotient is $2$, the only possibilities are those given in $(c)$.  

\medskip

It remains to verify our claim, thus let us consider two finite sets  $F\subseteq F' \subseteq A$.

\begin{claim}\label{claim:setstabilizer}  $G_{F'}{ \restriction B} \subseteq  G_{F}{ \restriction B}$.
\end{claim} 

\noindent{Proof of Claim \ref{claim:setstabilizer}}. Let $\sigma\in G_{F'}{ \restriction B}$. Then consider $F_1=\sigma (F)$ and let $\overline \sigma\in G_{F'}$ such that $\overline \sigma \restriction {B}= \sigma$. Since $Aut(C)\subseteq G_{\underline B}{\restriction  A}$, there is some $\theta \in G_{\underline B}$ such that $\theta (F_1)= F$. Let $\varphi'= \theta\circ \overline {\sigma}$. Then $\varphi'\in G_{F}$ and $\varphi'{\restriction B}= \overline{\sigma}{\restriction B}=\sigma$, hence
$\sigma\in G_{F}{ \restriction B}$. 
\hfill $\Box$

\begin{claim}\label{claim:pointstabilizer}  $G_{\underline F}{ \restriction B} = G_{\underline F'}{ \restriction B}$, provided that $\vert F\vert \geq 4$. 
In fact in this case $G_{\underline F}{ \restriction B} = G_{\underline{A}}{ \restriction B}$,
\end{claim} 

\noindent{Proof of Claim \ref{claim:pointstabilizer}}.  Clearly, $G_{\underline F'}{ \restriction B} \subseteq  G_{\underline F}{ \restriction B}$. Conversely, let $\sigma\in G_{\underline F}{ \restriction B}$ and let $\overline \sigma \in G_{\underline F}$ such that $ \overline \sigma {\restriction B}= \sigma$. Let $\theta$ be the extension of $\sigma$ (and $\overline{\sigma}$) by the identity on $A$. We prove that $\theta$ is an automorphism of $R$ from which follows that $\sigma \in  G_{\underline{A}}{ \restriction B}$, hence in $G_{\underline F'}{ \restriction B}$. 

Observe that  the group $\overline {(G_B{\restriction A})}^{\mathfrak S}$,  being monomorphic and closed, the classification given in Cameron's Theorem, asserts that any  homogeneous structure  $R'$ on $A$ with $Aut(R')= \overline {(G_B{\restriction A})}^{\mathfrak S}$ will  have the same local isomorphisms as  some relation which is at most $4$-ary. Local isomorphisms of $R'$ of finite domains are the  finite restrictions of members of $G_{B}{\restriction A}$. Hence we may suppose that $R'= R_{\restriction A}$, and  that if $\rho_i$ is a relation  occurring in $R$, then  each $n_i$-tuple $a\in \rho_i$ has  at most four components  in $A$.  Now, if these four components are in $F$, we will have $\theta(a) = \overline{\sigma}(a)\in \rho_i$ since $\overline{\sigma} \in G_{\underline F}$; if these four elements are not all in $F$, then  since $G_{\underline B}$ is monomorphic, it contains some  $\tau$ which sends these four  components into $F$; but now the previous case shows that $\theta(a)= \tau^{-1} \circ \theta \circ \tau (a)\in \rho_i$. 
\hfill  $\Box$

This completes the proof of Lemma \ref{lem:notfull}.

\end{proof}

By $(c)$ of this Lemma we get:
\begin{corollary}\label{cor:full symmetric}
Under the hypothesis of Lemma \ref{lem:notfull}, if $\overline {(G_{B}{\restriction A})}^{\mathfrak S}$ is  the full symmetric group on $A$, then $G_{\underline B} {\restriction A}$ is  also the full symmetric group on $A$. 

\end{corollary}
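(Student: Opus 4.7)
The plan is to proceed by cases according to the size of the quotient $G_{B}{\restriction A}/G_{\underline B}{\restriction A}$, which by part $(b)$ of Lemma \ref{lem:notfull} is either $1$ or $2$.

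Suppose first that this quotient has size $2$. Then part $(c)$ of Lemma \ref{lem:notfull} applies, and in particular $\overline{(G_{B}{\restriction A})}^{\mathfrak S}$ is either $Aut(B_C)$ or $Aut(D_C)$, where $C=(A,\leq)$ is the dense linear order provided by the lemma. Neither $Aut(B_C)$ nor $Aut(D_C)$ is the full symmetric group on $A$: for instance, any transposition of three collinear points fails to preserve the betweeness relation $B_C$, and similarly a $4$-cycle fails to preserve $D_C$. This contradicts the hypothesis that $\overline{(G_{B}{\restriction A})}^{\mathfrak S}$ is the full symmetric group, ruling this case out.

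Hence the quotient must have size $1$, meaning $G_{B}{\restriction A}=G_{\underline B}{\restriction A}$. Taking closures in $\mathfrak S(A)$ gives
\[
\overline{(G_{B}{\restriction A})}^{\mathfrak S} = \overline{(G_{\underline B}{\restriction A})}^{\mathfrak S}.
\]
By Lemma \ref{lem:groupaction}, the group $G_{\underline B}{\restriction A}$ is already closed in $\mathfrak S(A)$ (apply the lemma with the roles of $A$ and $B$ interchanged, using that $G$ is closed and $B\subseteq E$). Therefore $G_{\underline B}{\restriction A}=\overline{(G_{B}{\restriction A})}^{\mathfrak S}$, which by assumption is the full symmetric group on $A$, as required.

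The argument is essentially a direct consequence of Lemma \ref{lem:notfull}; the only subtle point is to ensure that $G_{\underline B}{\restriction A}$ is closed (so that collapsing the quotient transfers the assumed fullness of $\overline{(G_{B}{\restriction A})}^{\mathfrak S}$ to $G_{\underline B}{\restriction A}$ itself), but this is precisely what Lemma \ref{lem:groupaction} guarantees. No further obstacle is expected.
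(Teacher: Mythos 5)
Your proof is correct and takes essentially the same route as the paper: the corollary is obtained directly from parts $(b)$ and $(c)$ of Lemma \ref{lem:notfull}, since a quotient of size $2$ would force $\overline {(G_{B}{\restriction A})}^{\mathfrak S}$ to be $Aut(B_C)$ or $Aut(D_C)$ (neither of which is the full symmetric group), and in the trivial-quotient case $G_{\underline B}{\restriction A}$ coincides with $\overline {(G_{B}{\restriction A})}^{\mathfrak S}$ because it is closed by Lemma \ref{lem:groupaction}. The only difference is cosmetic: the paper records an alternative argument via the Schreier--Ulam theorem in a remark, while you spell out the closure step that the paper leaves implicit.
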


\begin{remark}
Corollary \ref{cor:full symmetric} also follows from the  Schreier-Ulam theorem \cite{schreierulam} on permutation groups (see also Scott \cite{scott} Page 305).

Indeed,  $G_{\underline {B}}{ \restriction A}$ is a  normal subgroup of $\overline {(G_{B}{\restriction A})}^{\mathfrak S}$. To see this let $g \in G_{\underline{B}}{\restriction A}$, and $h \in \overline {(G_{B}{\restriction A})}^{\mathfrak S}$. Now $h = lim_n \; h_n$ where $h_n \in G_{B}{\restriction A}$, and choose $\overline{h}_n \in G_B$ such that $ \overline{h}_n \restriction A=h_n$. Further choose $\overline{g} \in G_{\underline{B}}$ such that $\overline{g} \restriction A=g$. But now we have
$\overline{h}_n \circ \overline{g} \circ \overline{h}_n^{-1}=id_B \cup h_n\circ g \circ h_n^{-1} \in G_{\underline{B}}$, and since $lim_n \; h_n^{-1}\circ g \circ h_n = h \circ g \circ h^{-1}$we conclude that $h \circ g \circ h^{-1} \in G_{\underline {B}}{ \restriction A}$.

\medskip  

Now, the Schreier-Ulam theorem asserts  that the only proper normal subgroups of the symmetric group on a countable set 
are the group of permutations with finite support and the alternating subgroup. Neither of these groups is closed. 
Thus $G_{\underline {B}}{ \restriction A}$, being  closed, must be the full symmetric group.
\end{remark}

The following examples illustrate  quotients having two elements in Lemma \ref{lem:notfull}.

\begin{example}
Consider as a first example the countable set $E=\Q \times \{0,1\}$, naturally partitioned as $A=\Q \times \{0\}$,  $B=\Q \times \{1\}$. Define a quaternary relation $\rho(x,y,z,w)$ if $x,y \in A$, $z,w \in B$, and (abusing notation) satisfy $x<y$ iff $z<w$. Then $G=Aut(E,\rho)=\{ (f,g): f,g \in Aut(\Q), \mbox{ or } f,g \in Aut(B_\Q) \setminus Aut(\Q)\}$.
One easily verifies that in this case $G_{\underline B} {\restriction A} = Aut(\Q)$, and  $G_B{\restriction A} = Aut(B_\Q)$.

\medskip
Toward a second example, consider a chain $C=(E,\leq)$ and for an $n$-tuple $u=(u_1,u_2, \cdots, u_{n})$ of distinct elements of $E$, let $\sigma_u$ be the unique permutation of  ${\mathfrak S}_n$ such that $ u_{\sigma(1)} < u_{\sigma(2)} < \cdots < u_{\sigma(n)}$. Now for a subgroup $H\leq {\mathfrak S}_n$, form the $n$-ary relation $\rho_H=\{u \in E^n: \sigma_u \in H\}$. More generally consider two disjoint sets $A$ and $B$,  $H$ a subgroup of ${\mathfrak S}_n \times {\mathfrak S}_n$, and define a relation $\rho_H$ on $A \cup B$ by $\rho_H=\{(u,v) \in A^n \times B^n: (\sigma_u,\sigma_v) \in H\}$. Then $H=T_3^2 \cup (D_3\setminus T_3)^2$ is a group, and taking $A$ and $B$ as two disjoint copies of the rationals and $G=Aut(A \cup B, \rho_H)$ we obtain that  
$G_{\underline B} {\restriction A} = Aut(T_\Q)$, and  $G_B{\restriction A} = Aut(D_\Q)$.

\medskip
In that setting the first example can be restated using $H=\mathfrak{I}(2)^2 \cup (\mathfrak{S}(2) \setminus \mathfrak{I}(2))^2$.

\end{example}

\section{Finite monomorphic decomposition}\label{Finite monomorphic decomposition}

We  prove the following result.

\begin{theorem}\label{thm:monomorphic-siblings} If a countable relational structure $R$ is prehomogeneous and has a finite monomorphic decomposition,  then it has one or $2^{\aleph_0}$ siblings.  
\end{theorem}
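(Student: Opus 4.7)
\proof (Proof plan.)
The plan is to use the finite monomorphic decomposition of $R$ to split into two cases, depending on whether the infinite monomorphic components behave symmetrically or not. Write $E=E_{1}\cup\cdots\cup E_{n}$ as the partition of the domain of $R$ into its monomorphic components (which by hypothesis are finitely many). If $R$ is finite the conclusion is immediate, so I assume at least one component, say $A_{1},\dots,A_{r}$, is infinite, the remaining $E_{j}$'s forming a finite set $F\subseteq E$.

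\textbf{Case 1: some infinite component $A_{i}$ is not strongly indiscernible in $R$.} In this case I would invoke the ``key'' result referenced in the introduction and established earlier in the section (Theorem \ref{prop:keyfinite}), which gives directly $\sib(R)=2^{\aleph_{0}}$. Concretely one perturbs along the non-trivial direction provided by $\mathrm{Aut}(R)\restriction A_{i}$ (which, by Lemma \ref{lem:notfull} applied to the closed group $\mathrm{Aut}(R)$, is monomorphic but not the full symmetric group), and transfers the construction of $2^{\aleph_{0}}$ pairwise non-isomorphic siblings used for a single monomorphic structure in Theorem \ref{thm:siblingmonomorphic} to the component $A_{i}$, keeping $E\setminus A_{i}$ fixed; strong monomorphy of $A_{i}$ (Proposition \ref{prop: strong monomorphic component}) guarantees that each such restriction is still a sibling of $R$.

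\textbf{Case 2: every infinite monomorphic component $A_{i}$ is strongly indiscernible.} Here I would show $R$ is finitely partitionable in the sense of the main theorem, refining the monomorphic decomposition by splitting every finite component into singletons, so that the resulting partition of $E$ has finitely many blocks $\{x\}_{x\in F}\cup\{A_{1},\dots,A_{r}\}$. Any permutation of $E$ fixing each block of this refined partition is the identity on $F$ and acts by an arbitrary permutation on each $A_{i}$; by hypothesis each such map extends by the identity on the complement to an automorphism of $R$, and their composition is again an automorphism. Thus $R$ is finitely partitionable.

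To conclude $\sib(R)=1$ in Case 2 I would argue directly (recovering the Hodkinson--Macpherson conclusion in our setting). Let $S$ be any sibling of $R$. Since $S$ has the same age as $R$ and $R$ has a finite monomorphic decomposition, Lemma \ref{lem:induced decomposition} together with the equimorphy $S\sim R$ yields that $S$ admits a monomorphic decomposition with the same multiset of component-sizes: the same number of finite components of each isomorphism type, and the same number of countably infinite components, each strongly indiscernible over the rest (this property is determined by the age and the embedding quasi-order, both of which are preserved under equimorphy). Matching components piece by piece, choosing for each infinite $A_{i}$ an arbitrary bijection onto its counterpart in $S$ and for the finite components the unique isomorphism guaranteed by the fixed isomorphism type, produces an isomorphism $R\to S$; here strong indiscernibility of the infinite components ensures the glued map is a global isomorphism. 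Hence $\sib(R)=1$.

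The main obstacle will be the verification, in Case 2, that the matching of monomorphic components between $R$ and its sibling $S$ is forced (number and type of components preserved); this relies on the fact that those invariants are read off from the age, combined with the fact that under strong indiscernibility the age determines the structure once the component-type data is fixed. The delicate point is to check that no finite component of $R$ can be ``absorbed'' or ``split'' in $S$ without breaking equimorphy, which is where prehomogeneity and the finiteness of the decomposition are used.\endproof
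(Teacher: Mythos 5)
Your proposal is correct and follows essentially the same route as the paper: the dichotomy on whether every infinite monomorphic component is strongly indiscernible, with Theorem \ref{prop:keyfinite} supplying $2^{\aleph_0}$ siblings in the negative case and finite partitionability (infinite components plus singletons) giving a unique sibling in the positive case. The only cosmetic difference is that the paper first isolates the single-component case and settles it via Theorem \ref{thm:siblingmonomorphic}, whereas you absorb it into the general dichotomy, and you sketch the ``finitely partitionable implies one sibling'' step in more detail than the paper, which simply declares it clear (it is the easy direction of the Hodkinson--Macpherson characterization).
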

 
 \begin{case} $R$ has just one component. 
\end{case}
If so the conclusion follows from Theorem \ref{thm:siblingmonomorphic}: $\sib(R)$ is one if $Aut(R)$ is the symmetric group, and $2^{\aleph_0}$ otherwise.

\begin{case} $R$ has several monomorphic components. 
\end{case}

In this case the result follows from Theorem \ref{prop:keyfinite} below. Indeed suppose no infinite component is as in Theorem \ref{prop:keyfinite}. Then taking the partition whose classes are the infinite components and then singletons shows the structure is finitely partitioned. It is clear such structures have only one sibling.


%
%

\begin{theorem}\label{prop:keyfinite}
Let  $R$ be a countable structure which is prehomogeneous and such that $G=Aut(R)$ is oligomorphic. If $R$ has  an infinite monomorphic component $A$ which is not an strongly indiscernible subset of $R$  then $R$ has $2^{\aleph_0}$ siblings. 
\end{theorem}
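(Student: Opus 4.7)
Write $B:=E\setminus A$. The hypothesis that $A$ is not strongly indiscernible says exactly that $G_{\underline B}\restriction A$ is not the full symmetric group on $A$; by Corollary \ref{cor:full symmetric}, $\overline{(G_B\restriction A)}^{\mathfrak S}$ is not the full symmetric group either. Since $R$ is prehomogeneous and $Aut(R)$ is oligomorphic, Proposition \ref{compprehomogeneous} produces a dense linear order $C=(A,\leq)$ isomorphic to $(\Q,\leq)$ with $Aut(C)\subseteq G_{\underline B}\restriction A$, and Lemma \ref{lem:notfull} identifies $\overline{(G_B\restriction A)}^{\mathfrak S}$ as one of the four remaining monomorphic closed groups from Cameron's list.

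The plan is to imitate the construction from the proof of Theorem \ref{thm:siblingmonomorphic}, carried out inside $A$ while leaving $B$ untouched. For each $s\in\{0,1\}^{\N}$, pick $a<b$ in $A$ and set $C_s:=A_0\cup A_1^s\cup A_2\subseteq A$, where $A_0$ is a non-empty initial segment of $(A,\leq)$ below $a$ without a maximum, $A_2$ is a non-empty final segment above $b$ without a minimum, and $A_1^s\subseteq(a,b)$ realises the scattered chain $\sum_{n<\omega}C_n^{s(n)}$ with $C_n^{s(n)}$ of order type $\omega$ if $s(n)=0$ and $\omega^*$ if $s(n)=1$. As in the proof of Theorem \ref{thm:siblingmonomorphic}, the family can be arranged so that the chains $(C_s,\leq)$ are pairwise non-isomorphic and pairwise non-antiisomorphic, and so that none of them is order-isomorphic or anti-isomorphic to $(\Q,\leq)$. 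Set $R_s:=R_{\restriction B\cup C_s}$.

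Each $R_s$ is a sibling of $R$: the inclusion $R_s\hookrightarrow R$ is tautological, and conversely $A_0\cup A_2\subseteq C_s$ is dense without extremes, hence contains an order copy of $(A,\leq)\cong(\Q,\leq)$; any order embedding $\iota\colon(A,\leq)\to(C_s,\leq)$ extended by the identity on $B$ is an embedding of $R$ into $R_s$, by the strong monomorphy of $A$ (Proposition \ref{prop: strong monomorphic component}). Using oligomorphy, Lemma \ref{lem:definability} supplies an integer $k$ with $\simeq_{\leq k,R}\,=\,\simeq_R$; since $C_s$ and every infinite monomorphic component of $R$ inside $B$ keep at least $k+2$ elements in $B\cup C_s$, Lemma \ref{lem:restrictionequiv} gives that the monomorphic components of $R_s$ are precisely $C_s$ together with the monomorphic components of $R$ lying in $B$.

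To conclude, let $\phi\colon R_s\to R_{s'}$ be an isomorphism. Then $\phi(C_s)$ is a monomorphic component of $R_{s'}$ of the same isomorphism type as $C_s$, so either $\phi(C_s)=C_{s'}$ or $\phi(C_s)$ is some infinite monomorphic component $A_j\subseteq B$ of $R$; in the latter case Proposition \ref{compprehomogeneous} shows $A_j$ is chainable by an order isomorphic to $(\Q,\leq)$. In either case $\phi_{\restriction C_s}$ is an isomorphism between substructures of monomorphic structures whose local isomorphisms are governed by Cameron's classification, so, arguing exactly as in the proof of Theorem \ref{thm:siblingmonomorphic}, $\phi_{\restriction C_s}$ must be an order isomorphism or order anti-isomorphism of the underlying chains. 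This forces $C_s$ to be order- or anti-isomorphic either to $(\Q,\leq)$ (second case) or to $C_{s'}$ (first case), each contradicting the construction of the $C_s$'s. Hence the $R_s$ provide $2^{\aleph_0}$ pairwise non-isomorphic siblings of $R$. The main obstacle, absent from Theorem \ref{thm:siblingmonomorphic}, is the new possibility $\phi(C_s)\subseteq B$; it is defused by the fact that every infinite monomorphic component of our uniformly prehomogeneous $R$ carries a $\Q$-chain, whereas the $C_s$'s are designed to be proper, non-dense siblings of $\Q$.
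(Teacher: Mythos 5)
Your opening (reduction to $\overline {(G_{B}{\restriction A})}^{\mathfrak S}$ not being the full symmetric group, the dense chain on $A$, the family $C_s$, and the verification that each $R_s$ is a sibling) matches the paper, which obtains the same witnesses by applying Theorem \ref{thm:siblingmonomorphic} to a structure $S$ on $A$ with $Aut(S)=\overline {(G_{B}{\restriction A})}^{\mathfrak S}$. The gap is in your non-isomorphism step. You justify the key claim by saying $\phi_{\restriction C_s}$ is ``an isomorphism between substructures of monomorphic structures'' and that the argument of Theorem \ref{thm:siblingmonomorphic} applies ``exactly''. It does not: that argument controls $f$ because $f$ is an isomorphism between restrictions of a single chainable structure whose \emph{own} automorphism group is the small monomorphic group (the subclaim $Ind_n(B)\subseteq Ind_n(M,\Q)$ starts from $Ind_n(B)\subseteq Ind_n(R,\Q)$). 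Here the ambient structure on $A$ is $R_{\restriction A}$, and $Aut(R_{\restriction A})$ can be the full symmetric group on $A$ even though $\overline {(G_{B}{\restriction A})}^{\mathfrak S}$ is not, since the failure of strong indiscernibility of $A$ may come entirely from relations linking $A$ to $B$ (the paper's quaternary example on $\Q\times\{0,1\}$ is exactly of this kind: there $R_{\restriction A}$ is a pure set). So ``$\phi_{\restriction C_s}$ is an isomorphism of $R_{\restriction C_s}$ onto $R_{\restriction \phi(C_s)}$'' constrains nothing by itself; what must be proved is that $\phi_{\restriction C_s}$ is a $(G_{B}{\restriction A})$-local embedding, i.e.\ that its finite restrictions extend to automorphisms of $R$ stabilizing $A$ setwise (via prehomogeneity applied on $B\cup C_s$ and invariance of the component partition). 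Worse, in your case $\phi(C_s)=A_j\subseteq B$ the bichain you would feed to Frasnay compares the chain of $A$ with a chain on a \emph{different} component; the dense chain on $A_j$ given by Proposition \ref{compprehomogeneous} is a priori unrelated to $\phi$, and for a bad choice of it the indicative sequence can be $\mathfrak S$, so the argument yields nothing. One must first carry the chain of $A$ onto $A_j$ by an automorphism of $R$ mapping $A$ onto $A_j$ (which exists exactly when this case can occur) and only then run the indicative-sequence argument. ``Every infinite component carries a $\Q$-chain'' does not by itself defuse the case.

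The paper sidesteps both difficulties with a counting argument: it never analyses where an isomorphism sends the distinguished component, but shows that each isomorphism class among the $R_\alpha$'s contributes at most $\kappa^2$ pairs, where $\kappa\leq\aleph_0$ is the number of monomorphic components, because two pairs sent to the same pair of components would compose to give a $(G_{B}{\restriction A})$-embedding of $A_\alpha$ onto $A_{\alpha'}$ --- a map between subsets of the \emph{same} component, which is precisely what the choice of the $A_\alpha$'s via Theorem \ref{thm:siblingmonomorphic} forbids. Your direct route can probably be completed, but it needs the two missing steps above; as written, the crucial case $\phi(C_s)\subseteq B$ is not proved.
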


\begin{proof} The fact that  $A$  is not an indiscernible subset of $R$ means that $G_{\underline B}{\restriction A}$ (where $B$ is the complement of $A$) is not the full symmetric group on $A$. According to Proposition  \ref{compprehomogeneous},   $G_{\underline B}{\restriction A}$ and hence $ \overline {G_{B}{\restriction A}}^{\mathfrak S}$ are monomorphic groups; their structure is given by Lemma \ref{lem:notfull}.  According to Corollary  \ref{cor:full symmetric}, since $G_{\underline B}{\restriction A}$ is assumed not to be the full symmetric group,  $\overline {G_{B}{\restriction A}}^{\mathfrak S}$ is not the full symmetric group either. 
According to Theorem \ref{thm:siblingmonomorphic}, any structure $S$ on $A$ with $Aut(S)= \overline {G_{B}{\restriction A}}^{\mathfrak S}$ has $2^{\aleph_0}$ siblings. That is there are  $2^{\aleph_0}$ subsets $(A_\alpha)_{\alpha < 2^{\aleph_0}}$ of $A$ such that for each $\alpha, \alpha'$ there is a $\overline {G_{B}{\restriction A}}^{\mathfrak S}$-embedding of $A$ into $A_{\alpha}$, and no 
$\overline {G_{B}{\restriction A}}^{\mathfrak S}$-embedding of $A_\alpha$ onto $A_{\alpha'}$. 

\begin{claim} \label{claim:alphasibling}
Each restriction $R_\alpha = R \restriction E_\alpha$, where $E_\alpha= B \cup A_\alpha$, is a sibling of $R$.
\end{claim}

\noindent {\bf Proof of claim \ref{claim:alphasibling}}
Since $G=Aut(R)$ is oligomorphic,  there is a dense linear ordering $C=(A,\leq)$ on $A$ such that 
$Aut(C) \subseteq G_{\underline B}{\restriction A}$ (Proposition \ref{compprehomogeneous} b.). Since there is some $ \overline {G_{B}{\restriction A}}^{\mathfrak S}$-embedding $\sigma$ of $A$ into $A_{\alpha}$, the members of the indicative sequence of the bichain $(C, C_{\sigma^{-1}})$ (where $C_{\sigma^{-1}}=(A,\leq_{\sigma^{-1}})$ and $x \leq_{\sigma^{-1}} y$ iff 
$\sigma(x) \leq \sigma(y)$) are termwise included into the group sequence of $ \overline {G_{B}{\restriction A}}^{\mathfrak S}$ . If $C_{\sigma^{-1}}$ is scattered, then by Lemma \ref{lem:scatteredbichain}  this indicative sequence is $\mathfrak S$, and hence the group sequence of $ \overline {G_{B}{\restriction A}}^{\mathfrak S}$  is $\mathfrak S$ which is excluded. Consequently  $C_{\sigma^{-1}}$ is non-scattered, that is $C_{\sigma(A)}=(\sigma(A),\leq)$ is  non-scattered, and there is an embedding of $C$ into $C_{\sigma(A)}$, this embedding extended by the identity on $B$ is an embedding of $R$ into $R_\alpha$. This proves the claim. \hfill $\Box$

\medskip

Now let $\Gamma = \{ \{\alpha, \beta\}: R_\alpha \cong R_\beta \}$, and $\kappa$ the number of monomorphic components of $R$.

\begin{subclaim} \label{claim:boundsibling} $\vert [C]^2\vert \leq \kappa^2$ for each isomorphism equivalence class $C$ of siblings of $R$. 
\end{subclaim}

\noindent {\bf Proof of subclaim \ref{claim:boundsibling}} The structures $R$, $R_{\alpha}$ and $R_{\beta}$ have the same induced monomorphic decomposition. Hence, if $\alpha$ and $\beta$ are equivalent, an isomorphism of $R_{\alpha}$ onto $R_{\beta}$ induces a permutation of the classes of the decomposition. Such a  permutation sends $A_{\alpha}$ to some class and  $A_{\beta}$ to another one. If $\vert [C]^2\vert > \kappa^2$ then two pairs $\{\alpha, \beta\}$ and $\{\alpha', \beta'\}$ would be sent to the same pair of classes of the decomposition, but then $A_{\alpha}$ would be sent onto $A_{\alpha'}$. This map, being a $G_{B}{\restriction A}$-embedding from $A_\alpha$ to $A_{\alpha'}$, would be a $ \overline {G_{B}{\restriction A}}^{\mathfrak S}$-embedding from $A_\alpha$ to $A_{\alpha'}$, but there is none. \hfill $\Box$

\medskip

Since $\kappa$ is countable, there are $ 2^{\aleph_0}$ inequivalent elements, that is $2^{\aleph_0}$ siblings. This proves Theorem \ref{prop:keyfinite}.
\end{proof}

\medskip

Theorem \ref{thm:monomorphic-siblings} appears rather weak to us, we make the following conjecture.

\begin{conjecture}  
Under the  assumption that  a relational structure $R$ has a finite monomorphic decomposition, $R$ has one or infinitely many siblings.
\end{conjecture}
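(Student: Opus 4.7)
My plan is as follows. First, I reduce to a non-trivial case: if every infinite monomorphic component of $R$ is strongly indiscernible, then taking the partition whose blocks are the infinite components together with a singleton for each element of the finitely many, finite remaining components witnesses that $R$ is finitely partitionable, giving $\sib(R)=1$ as in the analysis of Theorem \ref{thm:monomorphic-siblings}. Otherwise there is an infinite monomorphic component $C$ that is not strongly indiscernible, and setting $B = E \setminus C$, Proposition \ref{prop: strong monomorphic component}(f) furnishes a linear order $(C, \leq)$ whose local isomorphisms, extended by the identity on $B$, are local isomorphisms of $R$. The non-strong-indiscernibility of $C$ forces this chain to be nontrivial.

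Second, I construct candidate siblings. For each $n \in \N$, the idea is to choose a subset $D_n \subseteq C$ such that $(D_n, \leq)$ order-embeds $(C, \leq)$; the embedding extended by the identity on $B$ then witnesses $R \hookrightarrow R_n$ where $R_n := R \restriction (B \cup D_n)$, and $R_n \hookrightarrow R$ trivially, so each $R_n$ is a sibling of $R$. The goal is to select the $D_n$ so that some order-theoretic invariant of $(D_n, \leq)$ distinguishes different values of $n$. Imitating the proof of Theorem \ref{thm:siblingmonomorphic}, when $(C, \leq)$ embeds a dense suborder of rational type, I would take $D_n$ of the form $A_0 + A_1^n + A_2$ where $A_0, A_2$ are copies of the dense part and $A_1^n$ is a scattered chain whose isomorphism type encodes $n$.

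Third, I distinguish the $R_n$. By Lemma \ref{lem:definability} applied to the finite monomorphic decomposition of $R$, the equivalence $\simeq_R$ (and hence $\simeq_{R_n}$) is definable by a universal formula and is therefore preserved by any isomorphism $\phi : R_n \to R_m$. Thus $\phi$ permutes the finitely many monomorphic classes; in particular $\phi(D_n) = D_m$, and $\phi \restriction D_n$ is an abstract isomorphism of the induced substructures $R \restriction D_n$ and $R \restriction D_m$. If the chain-theoretic invariants of $D_n$ are recoverable from the isomorphism type of $R \restriction D_n$ (up to the action of some fixed group on $C$), then $n = m$ must follow, producing $\aleph_0$ pairwise non-isomorphic siblings.

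The main obstacle, and the reason this step is delicate, is precisely the recovery of chain invariants. In the prehomogeneous setting of Theorem \ref{thm:monomorphic-siblings}, the induced monomorphic structure on $C$ is itself uniformly prehomogeneous, so Theorem \ref{freely rational} and Cameron's Theorem \ref{thm:cameronthm} place the relevant induced group $\overline{G_B \restriction C}^{\mathfrak{S}}$ in one of five concrete monomorphic groups, and Frasnay's bichain Theorem \ref{thm:frasnay84} together with Lemma \ref{lem:scatteredbichain} then forces the desired rigidity on the chain structure of $D_n$. Without prehomogeneity, $R \restriction C$ need not be uniformly prehomogeneous, its automorphism group need not fall into Cameron's list, and the Frasnay apparatus does not apply directly. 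Overcoming this obstacle would require either an extension of the Cameron--Frasnay classification to monomorphic structures lacking uniform prehomogeneity, or an entirely different construction of $\aleph_0$ siblings using cruder invariants such as the isomorphism types of $\simeq_{\leq k, R_n}$-classes for suitably large $k$.
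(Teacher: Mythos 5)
The statement you are trying to prove is stated in the paper as a \emph{conjecture}: the authors do not prove it. What they prove is Theorem \ref{thm:monomorphic-siblings}, which adds the hypothesis that $R$ is prehomogeneous (with $\Aut(R)$ oligomorphic in the key Theorem \ref{prop:keyfinite}), and they explicitly remark that even the single-component (monomorphic) case of the conjecture would require extending the results of \cite{LPW} on chains. So there is no ``paper proof'' for your attempt to match, and your proposal should be judged as an attempt on an open problem. You candidly flag one obstacle (the unavailability of the Cameron--Frasnay classification without uniform prehomogeneity), but the difficulty is more basic than that: your opening dichotomy is already wrong. Take $R=(\omega,\leq)$. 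This has a finite monomorphic decomposition (the whole domain is one monomorphic component), that component is \emph{not} strongly indiscernible, and yet $\sib(R)=1$, since every infinite subset of $\omega$ carries a chain of type $\omega$. Hence the plan ``if some infinite component fails to be strongly indiscernible, produce infinitely many siblings'' cannot succeed in general. In the paper's setting this example is excluded precisely because prehomogeneity plus oligomorphy forces, via Theorem \ref{freely rational}, the infinite component to be chainable by a chain of rational type, which is what makes Theorems \ref{thm:cameronthm}, \ref{thm:frasnay84} and \ref{thm:siblingmonomorphic} applicable. Without that hypothesis the dichotomy ``strongly indiscernible versus not'' simply does not line up with ``one sibling versus infinitely many.''

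The same example breaks your second step: you need subsets $D_n\subseteq C$ into which $(C,\leq)$ order-embeds and whose order types are pairwise distinguishable, but when $(C,\leq)$ is scattered of type $\omega$ every infinite subset has order type $\omega$, so no order-theoretic invariant separates the $R_n$; your construction $A_0+A_1^n+A_2$ presupposes that $(C,\leq)$ contains a copy of the rationals, which Proposition \ref{prop: strong monomorphic component}(f) does not provide. Your third step also needs more care even where it applies: an isomorphism $R_n\to R_m$ permutes the (finitely many) monomorphic classes but need not send $D_n$ to $D_m$ when there are several infinite components; the paper's Theorem \ref{prop:keyfinite} handles this with a counting argument (Subclaim \ref{claim:boundsibling}) over $2^{\aleph_0}$ candidates rather than $\aleph_0$. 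In short: the reduction is invalid, the construction does not cover the scattered case, and the conjecture remains open; a correct treatment would have to absorb, at a minimum, the full case analysis of siblings of arbitrary countable chains from \cite{LPW}.
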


 We proved that it holds if the structure is a chain \cite{LPW}. One would need to extend  this conclusion to the case of an infinite monomorphic relational structure $R$; such a structure is chainable.  Next, one must  go from a monomorphic structure to one admitting a finite monomorphic decomposition. 

\section{Structures with no finite monomorphic decomposition}\label{section:nofinitemonomorphicdecomposition}

In this section, we prove the following result:

\begin{theorem}\label{thm infinitely many}Let $R$ be  a countable prehomogeneous relational structure such that $Aut(R)$ is  oligomorphic. 
\begin{enumerate}[(a)]
\item If $R$ has infinitely many monomorphic components then $sib(R)$ is infinite; 
\item If $R$ has infinitely many infinite monomorphic components then $\sib(R)=2^{\aleph_0}$.
\end{enumerate}
\end{theorem}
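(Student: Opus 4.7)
The plan is to follow the outline given in Section~1.1. By Theorem~\ref{thm:unifprehomtest}, $R$ is $\aleph_0$-categorical, hence universal in its age: any countable extension $R'$ of $R$ with $\age(R')=\age(R)$ admits an embedding of $R$. Consequently, every restriction of such an extension to $E$ together with a finite ``new'' set is automatically a sibling of $R$, and the task reduces to producing enough pairwise non-isomorphic such restrictions.

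For part (a), I would first use the compactness theorem of first-order logic together with Ramsey's theorem to construct a countable extension $R'$ of $R$ on $E'=E\sqcup X$ with $X$ countably infinite, such that: (i) $\age(R')=\age(R)$; (ii) $X$ is a monomorphic part of $R'$; and crucially (iii) the monomorphic component $K$ of $R'$ that contains $X$ has finite trace $F:=K\cap E$ on $E$. Property (iii) is where the hypothesis of infinitely many monomorphic components enters: because $R$ has infinitely many components, there is a coherent ``fresh'' $1$-type for the new elements that is $\simeq$-separated from all but finitely many components of $R$ by finite witnesses. Ramsey's theorem ensures that finite subsets of $X$ consistently realize a single isomorphism type, making the required theory finitely satisfiable in $R$, hence consistent. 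Given such an $R'$, for each finite $H\subseteq X$ set $R'_H:=R'\restriction(E\cup H)$. Then $R\hookrightarrow R'_H$ trivially and $R'_H\hookrightarrow R$ by universality of $R$, so $R'_H$ is a sibling. By Lemma~\ref{lem:restrictionequiv} applied to $R'$ and the subset $E\cup H$ (once $|H|$ is large enough), the monomorphic decomposition of $R'_H$ coincides with that of $R$ except that $F$ is enlarged to $F\cup H$, yielding a component of $R'_H$ of cardinality $|F|+|H|$. By Lemma~\ref{lem:size component}(2), only finitely many cardinalities occur as sizes of monomorphic components of $R$; letting $|H|$ range over infinitely many values exceeding all such cardinalities produces infinitely many pairwise non-isomorphic siblings.

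For part (b), I would iterate the same compactness construction. Fix countably many pairwise distinct infinite monomorphic components $(C_n)_{n<\omega}$ of $R$. Build a single extension $R'$ on $E'=E\sqcup\bigsqcup_{n<\omega}X_n$ in which each $X_n$ is an infinite monomorphic part of $R'$ whose enclosing $R'$-component has finite trace $F_n\subseteq E$ meeting only $C_n$ (and possibly a bounded number of other components); the fresh $1$-types attached to distinct $n$'s can be chosen independently because infinitely many $C_n$'s are available to absorb them. For each $h\in\omega^\omega$, choose $H_n\subseteq X_n$ with $|H_n|=h(n)$ and set $R'_h:=R'\restriction\bigl(E\cup\bigcup_n H_n\bigr)$. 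As in part (a), each $R'_h$ is a sibling of $R$, and its monomorphic decomposition carries the sequence of cardinalities $(|F_n|+h(n))_{n<\omega}$ up to permutation of indices. By the argument used in the proof of Lemma~\ref{lem:infinitelymany}, sequences distinct up to permutation yield non-isomorphic $R'_h$'s. Since there are $2^{\aleph_0}$ equivalence classes of such sequences, $\sib(R)\geq 2^{\aleph_0}$; the reverse bound is trivial.

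The main obstacle is the compactness-plus-Ramsey construction of $R'$ in the first step, especially securing the finite-trace property (iii). One must set up the auxiliary theory carefully, with axioms asserting the elementary diagram of $R$, axioms forcing the new constants $d_i$ to share a $1$-type compatible with $\age(R)$, axioms (produced by Ramsey inside $R$) forcing finite subsets of the $d_i$'s to realize a common isomorphism type, and axioms expressing that each $d_i$ is $\simeq$-separated from all but finitely many components of $R$ by concrete finite witnesses. Consistency of this theory uses the infinitely-many-components hypothesis precisely to supply the separating witnesses. Once $R'$ is in hand, the counting arguments based on Lemmas~\ref{lem:restrictionequiv} and~\ref{lem:size component} are essentially routine.
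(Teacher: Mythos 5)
Your overall strategy is the one the paper itself follows (it is the outline of Section 1.1): build a $1$-extension $R'$ of $R$ in which the new elements form an infinite monomorphic part whose $R'$-component has finite trace on $E$, observe that the restrictions $R'_H$ for finite $H$ are siblings by universality, and distinguish them by the cardinalities occurring in their monomorphic decompositions, using that only finitely many cardinalities occur among the components of $R$ (Lemma \ref{lem:size component}(2), Proposition \ref{cor:manyextensions}); part (b) is the same idea run through Lemma \ref{lem:infinitelymany}. The counting endgame is essentially right. The genuine gap sits exactly where you flag ``the main obstacle'', and the mechanism you propose there does not suffice. Since $R'$ is a $1$-extension, Lemma \ref{lem:trace} shows that the trace on $E$ of the component $K$ of $R'$ containing the new part is either empty or a \emph{single component of $R$}; what must therefore be excluded is that $K$ swallows an \emph{infinite} component of $R$. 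Forcing the new elements to be $\simeq$-separated ``from all but finitely many components'' leaves open precisely the bad case in which an exceptional component is infinite, and it is moreover unclear how to keep such separation axioms consistent with the monomorphy and age-preservation axioms: Fra\"{\i}ss\'e's chainability theorem (Theorem \ref{theo:chainability}) hands you an infinite monomorphic set somewhere in $E$ with no control over which components it meets. The paper's solution is different: it fixes an orbit $O$ of singletons meeting infinitely many components, takes a transversal $A\subseteq O$ meeting each such component in exactly one point, builds an extension that is ``good above $A$'' (Lemma \ref{lem:adding a block}), and then uses prehomogeneity, not extra axioms, to prove a posteriori that $\vert K\cap E\vert \leq 1$ (Lemma \ref{good extension}). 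That prehomogeneity argument is the missing idea.

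Two smaller points. First, your appeal to Lemma \ref{lem:restrictionequiv} to read off the decomposition of $R'_H$ presupposes that $\simeq_{\leq k, R'}$ and $\simeq_{R'}$ coincide for some $k$, which is not guaranteed for the extension $R'$ (Lemma \ref{lem:definability} applies to $R$, not to $R'$); the paper avoids this with the bespoke combinatorial Lemma \ref{lem:many blocks}. Second, in (b) your simultaneous attachment of infinitely many new infinite parts must also guarantee that the $X_n$ land in pairwise distinct components of $R'$ disjoint from $E$; the paper secures this by expressing $\not\simeq_R$ as a fixed existential formula with boundedly many quantifiers (Lemma \ref{lem:definability}) and adding the corresponding sentences, obtaining $R'\cong R$ and hence an embedding of $R$ into $R_{\restriction E\setminus D}$ for a union $D$ of infinitely many infinite components (Lemma \ref{addinginfinitelymany}); your ``trim the new parts'' count is then the mirror image of the paper's Lemma \ref{lem:infinitelymany} and works the same way once that construction is in place.
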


We prove  $(a)$ in Subsection \ref{proof of a}.  For that, we  show in the next subsection  that $R$ has a $1$-extension $R'$  such that $E'\setminus E$, the difference of the two domains,  is an infinite monomorphic part and for which the component of $R'$ containing it meets $E$ on a finite set (Lemma \ref{good extension}). Then,  we show that the extensions of $R$ to  subsets of $E'\setminus E$ having finite distinct cardinalities provide distinct siblings (Proposition \ref{cor:manyextensions}).

We prove $(b)$ in Subsection \ref{proof of b}.  We prove that if $R$ has  infinitely many infinite classes,  one may  select  a countable union $D$ of infinite equivalence classes such that $R$ is embeddable into $R_{\restriction E\setminus D}$ (Lemma \ref{addinginfinitelymany}). Then,we apply  Lemma \ref{lem:infinitelymany}.

\subsection{Adding an infinite monomorphic part} 






\begin{lemma}\label{lem:trace}Let $R$ and $R'$ be two relational structures with  domains $E$ and $E'$ respectively. If $R'$ is an extension, resp. a $1$-extension, of $R$ then the partition of $E'$  into the monomorphic components of $R'$ induces a partition of $E$ into monomorphic parts, resp. monomorphic components of $R$. \end{lemma}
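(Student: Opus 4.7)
The plan is to identify the monomorphic components via the equivalence relation of Lemma~\ref{lem:equiv}, whose classes are precisely the components, and then to compare $\simeq_{R'}$ on $E'$ with $\simeq_{R}$ on $E$. The extension case amounts to an inclusion of one partition in the coarsening by the other, while the 1-extension case amounts to an equality of the two relations on $E$.

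For the \emph{extension} case, let $C$ be a monomorphic component of $R'$. Since $R = R'_{\restriction E}$, any two finite subsets $A, A' \subseteq E$ satisfy $R_{\restriction A} = R'_{\restriction A}$ and $R_{\restriction A'} = R'_{\restriction A'}$. If $A$ and $A'$ have the same cardinality and $A \setminus (C \cap E) = A' \setminus (C \cap E)$, then because $A, A' \subseteq E$ we have automatically $A \setminus C = A' \setminus C$; the monomorphy of $C$ in $R'$ gives $R'_{\restriction A} \cong R'_{\restriction A'}$ and therefore $R_{\restriction A} \cong R_{\restriction A'}$. Hence $C \cap E$ is a monomorphic part of $R$, and letting $C$ range over the components of $R'$ yields the desired partition of $E$.

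For the \emph{1-extension} case, I upgrade ``monomorphic part'' to ``monomorphic component''. Equivalently, I show that $\simeq_{R'}$ and $\simeq_{R}$ agree on $E$. One direction, $x \simeq_{R'} y \Rightarrow x \simeq_{R} y$ for $x, y \in E$, follows from the previous paragraph by restricting the witnesses to finite $F \subseteq E \setminus \{x, y\}$. For the converse, take $x, y \in E$ with $x \simeq_{R} y$ and a finite $F \subseteq E' \setminus \{x, y\}$; split $F = F_E \sqcup F_\ast$ with $F_E := F \cap E$ and $F_\ast := F \setminus E$. I invoke the 1-extension property with fixed set $\{x, y\} \cup F_E \subseteq E$ and free set $F_\ast \subseteq E' \setminus E$: this supplies a local isomorphism $h$ from $R'$ to $R$ that is the identity on $\{x, y\} \cup F_E$ and sends $F_\ast$ into $E$. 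By injectivity, $h(F_\ast)$ is disjoint from $\{x, y\} \cup F_E$. Then $h$ induces isomorphisms $R'_{\restriction \{x\} \cup F} \cong R_{\restriction \{x\} \cup F_E \cup h(F_\ast)}$ and $R'_{\restriction \{y\} \cup F} \cong R_{\restriction \{y\} \cup F_E \cup h(F_\ast)}$, and applying $x \simeq_{R} y$ with witness $F_E \cup h(F_\ast) \subseteq E \setminus \{x, y\}$ equates the two $R$-restrictions on the right. Composing shows $R'_{\restriction \{x\} \cup F} \cong R'_{\restriction \{y\} \cup F}$, so $x \simeq_{R'} y$.

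The only delicate point is the setup of the 1-extension invocation: I must fix not just $\{x, y\}$ but also the ``old'' portion $F_E$, so that the supplied $h$ preserves them pointwise and sends only the ``new'' portion $F_\ast$ into $E$. Once this bookkeeping is in place, injectivity of $h$ and the defining property of $\simeq_{R}$ close the argument without further work.
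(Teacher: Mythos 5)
Your proof is correct and follows essentially the same route as the paper: the extension case is immediate from the definitions, and for the $1$-extension case both arguments use the $1$-extension property to replace a witness set $F'\subseteq E'\setminus\{x,y\}$ by one inside $E$ via a local isomorphism fixing $x$, $y$ and $F'\cap E$ (the paper phrases this contrapositively, transferring a witness of inequivalence for $\simeq_{R'}$ to one for $\simeq_{R}$). Your write-up just makes explicit the bookkeeping (injectivity, disjointness of $h(F_\ast)$ from the fixed set) that the paper leaves implicit.
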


\begin{proof}
Let $(E'_j)_{j\in J}$ be the  monomorphic decomposition of $R'$ and let $(E'_j\cap E)_{j\in J}$ be the family of the induced blocks. Trivially, these sets  are monomorphic parts  of $R$, hence our first assertion holds. Furthermore,  the partition into those parts  is finer than the partition given by the monomorphic decomposition of $R$. To prove that these two partitions coincide whenever $R'$ is a $1$-extension of $R$, let $x$ and $y$ be two different blocks of the induced partition. They are inequivalent for $R'$, hence, there is a finite subset  $F'$ of $E'\setminus\{x,y\}$ witnessing that fact. Since  $R'$ is a  $1$-extension of $R$,  there is a local isomorphism from $R'$ to $R$ which fixes  $x, y$ and $F'\cap E$. The image $F$ of  $F'$ witnesses that  $x$ and $y$ are  inequivalent modulo $R$, that they  are into two parts  of the monomorphic decomposition. This proves our assertion.
\end{proof}

\begin{lemma}Let $R$ be a relational structure with domain $E$ and let  $R'$ be a $1$-extension of $R$ with domain $E'$ such that $E'\setminus E$ is an infinite  monomorphic part of $R'$. If the trace over $E$ of the component $C'$ of $R'$ containing $E'\setminus E$ is infinite then $R'$ is a $1$-extension of $R_{-C'}:= R_{\restriction E\setminus C'}$ and the monomorphic decomposition of $R_{-C'}$ is made of  components of $R$.  
\end{lemma}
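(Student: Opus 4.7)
The plan is to prove the two conclusions of the lemma separately, beginning with the statement on the monomorphic decomposition of $R_{-C'}$, which is the easier of the two.

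For the monomorphic decomposition of $R_{-C'}$: I would apply Lemma~\ref{lem:trace} to the $1$-extension $R\subseteq R'$. Since this is a $1$-extension (not merely an extension), the partition of $E'$ into $R'$-monomorphic components induces the partition of $E$ into $R$-monomorphic components; in particular $C'\cap E$ is a single monomorphic component of $R$, so $E\setminus C'$ is the union of all remaining monomorphic components of $R$. The restriction to $E\setminus C'$ keeps each of these components in full, so the size hypothesis of Lemma~\ref{lem:restrictionequiv} is trivially satisfied for every $k$, and consequently $\simeq_{R_{-C'}}$ coincides with the restriction of $\simeq_R$ to $E\setminus C'$. Its equivalence classes are therefore exactly those $R$-components contained in $E\setminus C'$, which proves the decomposition claim.

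For the $1$-extension conclusion: given finite $F\subseteq E\setminus C'$ and finite $F'\subseteq E'\setminus(E\setminus C')=C'$, I need to extend the identity on $F$ to a local isomorphism of $R'$ sending $F'$ into $E\setminus C'$. My strategy is to decompose $F'=F'_1\sqcup F'_2$ with $F'_1:=F'\cap E\subseteq C'\cap E$ and $F'_2:=F'\cap(E'\setminus E)$, and then use the hypothesis that $R'$ is a $1$-extension of $R$ with the finite set $F\cup F'_1\subseteq E$ as base and $F'_2$ as the ``new'' elements. This produces a local isomorphism $f$ of $R'$ into $R$ that is the identity on $F\cup F'_1$ and sends $F'_2$ into $E$; by the component observation above, $f(F'_2)$ automatically lands in the $R$-component $C'\cap E$, so $f(F')\subseteq C'\cap E$. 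The remaining task is to push this image further, out of $C'\cap E$ and into $E\setminus C'$, which I would attempt by precomposing $f$ with a suitable local isomorphism of $R'$ supplied by the strong monomorphy of the infinite component $C'$ (Proposition~\ref{prop: strong monomorphic component}), using the infiniteness of both halves $C'\cap E$ and $E'\setminus E$ to pivot the image.

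The main obstacle is precisely this last pivoting step: it is where the hypothesis that the trace $C'\cap E$ is infinite has to do real work, by enabling one to realize any configuration sitting inside $C'$ as a configuration already embeddable into $E\setminus C'$. I expect this step to demand a compactness or Ramsey-type argument exploiting the simultaneous infiniteness of the trace and of $E'\setminus E$, together with the earlier observation that $R_{-C'}$ already captures the monomorphic structure of $R$ away from $C'$. Once this pivoting is in place, one composes the local isomorphism it produces with $f$ to obtain the required extension of the identity on $F$ sending $F'$ into $E\setminus C'$.
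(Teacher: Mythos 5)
Your argument for the second conclusion does not go through as written: Lemma \ref{lem:restrictionequiv} requires $\vert (E\setminus C')\cap D\vert \geq \Min\{k+2,\vert D\vert\}$ for \emph{every} class $D$ of $\simeq_{\leq k, R}$, and this fails for the class containing $C'\cap E$, which by hypothesis is infinite but is deleted entirely. The point is not cosmetic: deleting a whole component can merge the remaining ones (the witnesses $F$ certifying $x\not\simeq_R y$ may all meet $C'$), so the claim that $\simeq_{R_{-C'}}$ is the restriction of $\simeq_R$ needs the $1$-extension conclusion as input. That is how the paper proceeds: it first proves that $R'$ is a $1$-extension of $R_{-C'}$ and then applies Lemma \ref{lem:trace} to both $R$ and $R_{-C'}$. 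In your argument for the first conclusion there is also an unjustified step --- a finite local isomorphism need not preserve $\simeq_R$, so $f(F'_2)$ has no reason to land in $C'\cap E$ --- but the decisive problem is the ``pivoting step'' you leave open: under the hypothesis as printed it cannot be carried out at all.

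Indeed the statement contains a misprint: the trace $C'\cap E$ should be \emph{finite}, not infinite. This is the hypothesis the paper's own proof uses (``Since $R'$ is a $1$-extension of $R$ and $C'\cap E$ is finite\dots''), and it is the only case invoked later, in Proposition \ref{cor:manyextensions} via Lemma \ref{good extension}. With an infinite trace the lemma is false: let $R=K_\infty\oplus\overline{K_\infty}$ on $E=A\cup B$ ($A$ an infinite clique, $B$ an infinite independent set) and let $R'$ add an infinite set $D$ forming a clique with $A$ and having no edges to $B$. Then $R'$ is a $1$-extension of $R$, $D=E'\setminus E$ is an infinite monomorphic part, and $C'=A\cup D$ has infinite trace $A$; yet no local isomorphism of $R'$ can send an edge of $A$ into $B=E\setminus C'$, so $R'$ is not a $1$-extension of $R_{-C'}$. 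With the corrected hypothesis the paper performs your two moves in the opposite order: given finite $F$, it first uses strong monomorphy of the infinite component $C'$ (Proposition \ref{prop: strong monomorphic component}) to carry $F\cap C'$ onto a set $X\subseteq E'\setminus E$ while fixing $E\setminus C'$ pointwise, and only then applies the $1$-extension property of $R'$ over $R$ with the \emph{finite} base $(F\setminus C')\cup(C'\cap E)$; since all of $C'\cap E$ is held fixed, injectivity forces the image of $X$ into $E\setminus C'$. That ordering, and the resulting need for the trace to be finite, is exactly the idea missing from your outline.
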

\begin{proof}
Let $F$ be a finite subset of $E'$. We have to show that there is a local isomorphism $h$ of $R'$ that fixes $F\setminus C'$ and maps  $F\cap C'$ into $E'\setminus C'$. Since $E'\setminus E$ is infinite, it contains a subset $X$ with the same cardinality as $F\setminus C'$. Since $C'$ is an infinite monomorphic component, it is strongly monomorphic (cf. $(f)$ of Proposition \ref{prop: strong monomorphic component}),  hence there is a local isomorphism $f$ that fixes $F\setminus C'$ and maps $F\cap C'$ onto $X$. Since $R'$ is $1$-extension of $R$ and $C'\cap E$ is finite, there is a local isomorphism of $R'$ that fixes $(F\setminus C') \cup (C'\cap E)$ and maps $X$ on a subset of $E'\setminus E$. Since this subset is disjoint from $C'$, we may set $h:=  g\circ f$.

Since $R'$ is a $1$-extension of $R$ and of $R_{-C'}$, then according to Lemma \ref{lem:trace} the monomorphic decomposition of $R'$ induces the monomorphic decompositions of $R$ and of $R_{-C'}$. It follows that the monomorphic decomposition of $R_{-C}$ is made of  components of $R$. 
\end{proof}

\begin{lemma} \label{lem:many blocks}Let   $(E_i)_{i\in I}$ be the monomorphic decomposition into components of a relational structure $R$ with base  $E$. Let  $R'$ be a $1$-extension of  $R$ with base  $E'$ such that $E'\setminus E$ is a monomorphic part  of  $R'$. \\
Then either: 
\begin{enumerate}
\item There is some index $i$ such that $E_i\cup (E'\setminus E)$ is a monomorphic component of $R'$ and for every index  $j\not =i$,  $E_j$ is a monomorphic component of   $R'$; \\
or
\item  There is some non-negative  integer $k$,   $k\leq \vert E'\setminus E\vert$ such that if  $H'\subseteq E'\setminus E$, has  at least $k$ elements,  the monomorphic decomposition of  $R'_{\restriction E\cup H'}$ is made of  the $E_i$'s and of  $H'$. 
\end{enumerate}
\end{lemma}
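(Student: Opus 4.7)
My plan is to first analyze the component $C'$ of $R'$ containing $E' \setminus E$ via Lemma \ref{lem:trace}, which immediately produces a dichotomy matching the two conclusions, and then to handle the nontrivial case by invoking Lemma \ref{lem:restrictionequiv}.

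By Lemma \ref{lem:trace}, since $R'$ is a $1$-extension of $R$, the trace on $E$ of the monomorphic decomposition of $R'$ equals the monomorphic decomposition of $R$. Hence $C' \cap E$ is either empty or a single $E_{i_0}$, and any component of $R'$ other than $C'$ is contained in $E$ and equals some $E_j$. If $C' \cap E = E_{i_0}$ for some $i_0 \in I$, then necessarily $C' = E_{i_0} \cup (E' \setminus E)$ and the remaining $E_j$'s are components of $R'$, yielding conclusion (1). If instead $C' \cap E = \emptyset$, then $C' = E' \setminus E$ and the full decomposition of $R'$ is $\{E_i : i \in I\} \cup \{E' \setminus E\}$; it remains to establish (2).

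For that second case, I would use Lemma \ref{lem:restrictionequiv} to transport the decomposition to $R'_{\restriction E \cup H'}$. By Lemma \ref{lem:definability}, provided $\simeq_{R'}$ has finitely many classes or $Aut(R')$ has finitely many orbits of pairs (the case in the intended oligomorphic applications), fix an integer $k_0$ with $\simeq_{\leq k_0, R'}\, =\, \simeq_{R'}$ and set $k := \min(k_0+2,\, |E' \setminus E|)$. For $H' \subseteq E' \setminus E$ with $|H'| \geq k$, each $\simeq_{R'}$-class retains enough elements in $E \cup H'$: automatically for each $E_i$ since $E_i \subseteq E \cup H'$, and by the choice of $k$ for the class $E' \setminus E$. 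Lemma \ref{lem:restrictionequiv} then gives that $\simeq_{\leq k_0, R'_{\restriction E \cup H'}}$ coincides with the restriction of $\simeq_{R'}$ to $E \cup H'$, whose classes are exactly the $E_i$'s and $H'$. Since these are also monomorphic parts of $R'_{\restriction E \cup H'}$ (as restrictions of monomorphic parts of $R'$), the inclusion $\simeq_{R'_{\restriction E \cup H'}}\, \subseteq\, \simeq_{\leq k_0, R'_{\restriction E \cup H'}}$ forces them to be precisely its monomorphic components, giving (2).

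The main obstacle is securing the uniform finite $k$ in the second case, which rests on the definability condition $\simeq_{\leq k_0, R'}\, =\, \simeq_{R'}$. In full generality this can fail --- for instance if $I$ is infinite and witnesses separating an $E_i$ from $E' \setminus E$ demand unboundedly many auxiliary points --- but in the paper's applications, oligomorphicity of $Aut(R)$ (inherited appropriately by $R'$) secures Lemma \ref{lem:definability}, and the argument closes.
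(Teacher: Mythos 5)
The first half of your argument --- using Lemma \ref{lem:trace} to see that the component $C'$ of $R'$ containing $E'\setminus E$ traces on $E$ to either the empty set or a single $E_{i_0}$, the former case forcing the decomposition of $R'$ to be $\{E_i: i\in I\}\cup\{E'\setminus E\}$ and the latter giving conclusion (1) --- is correct and coincides with the opening of the paper's proof. The gap is in the second case. Your derivation of (2) rests entirely on the existence of a finite $k_0$ with $\simeq_{\leq k_0, R'}\,=\,\simeq_{R'}$: without it, $\simeq_{\leq k_0,R'}$ could merge some $E_i$ with $E'\setminus E$, Lemma \ref{lem:restrictionequiv} would then return the wrong partition on $E\cup H'$, and your final squeeze between $\simeq_{R'_{\restriction E\cup H'}}$ and $\simeq_{\leq k_0, R'_{\restriction E\cup H'}}$ no longer pins down the components. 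This hypothesis is not part of the lemma, which is stated for an arbitrary relational structure $R$ and an arbitrary $1$-extension $R'$, and it does not follow from the stated hypotheses (the paper itself notes that for ternary relations there is no threshold $k$ in general). Nor is it secured in the intended application: there $R'$ is manufactured by a compactness argument (Lemma \ref{lem:adding a block}), $\simeq_{R'}$ has infinitely many classes because $R$ has infinitely many components, and nothing guarantees that $Aut(R')$ has finitely many orbits of pairs; so neither clause of Lemma \ref{lem:definability} applies to $R'$, and your phrase ``inherited appropriately by $R'$'' is doing unjustified work.

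The paper avoids this entirely with a purely combinatorial argument. It first proves a pointwise claim: for each $z\in E'\setminus E$, in $R'_{\restriction E\cup\{z\}}$ either $\{z\}$ is a component or there is a \emph{unique} $i$ with $E_i\cup\{z\}$ a component (uniqueness because two such indices would force $E_i$ and $E_j$ to merge already in $R$). It then considers the family $\mathcal H$ of finite $H'\subseteq E'\setminus E$ that fail to be components of $R'_{\restriction E\cup H'}$: if some finite $H$ lies outside $\mathcal H$, then $k=\vert H\vert$ works for (2); if every finite $H'$ lies in $\mathcal H$, then each $H'$ attaches to a unique $E_{i'}$, the index is the same for all $H'$ (otherwise the union $H'\cup H''$ would attach to no index and hence lie outside $\mathcal H$), and passing to the limit shows $E_i\cup(E'\setminus E)$ is a component of $R'$, i.e.\ one is back in case (1). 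No definability or oligomorphicity is used. To repair your proof you would either have to prove the threshold property for the specific $R'$ arising in the applications (which is not done anywhere in the paper and seems delicate), or replace this step by an argument of the paper's kind.
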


\begin{proof}
We start with the following claim. 

\begin{claim}\label{claim1} Let  $z\in E'\setminus E$ and $R'_{z}= R'_{\restriction E\cup \{z\}}$. Then, either there is a unique index  $i$ such that   $E_i\cup \{z\}$ is a monomorphic component of $R'_{z}$, or   $\{z\}$ is a monomorphic component of $R'_{z}$.  
\end{claim}

\noindent{\bf Proof of Claim \ref{claim1}.}
 If there is some index $i$ and some $x_i\in E_i$ such that $z$ and  $x_i$ are equivalent modulo $R'_z$,  then we show that $i$ is unique. Indeed suppose on the contrary that we have $j\not =i$ and  $x_j\in E_j$ such that $z$ and  $x_j$ are equivalent modulo $R'_z$. It follows that $x_i$ and $x_j$ are equivalent modulo $R'_z$. This implies that there are equivalent modulo $R$, a contradiction. This proves the uniqueness of  $i$ if it exists.  Since $R'$ is a $1$-extension of $R$, $R'_{z}$ is a $1$-extension too, hence Lemma  \ref{lem:trace} applies. Thus, if there is no such  $i$, then $\{z\}$ must be  a  monomorphic component of $R'_{z}$. 
 \hfill $\Box$

%

With this claim, the proof of the lemma goes as follows.  According to Lemma \ref{lem:trace}, the decomposition of $R$ is induced by  the decomposition of $R'$. Hence, either $E'\setminus E$ union  some  $E_i$ (in fact a unique one) forms a component, or not. In this later case, we claim that there exists some integer  $k$ such that every finite subset  $H'$ with at least  $k$ elements of $E'\setminus E$ is a monomorphic component  of  $R'_{\restriction E\cup H'}$.

Indeed, let $\mathcal H$ be the set of  finite $H'\subseteq E'\setminus E$ such that $H'$ is not a monomorphic component of $R'_{\restriction E\cup H'}$. If there is some finite subset $H\not \in \mathcal H$ then for   $k= \vert H\vert$ we will have the conclusion of our claim. Suppose that every finite subset $H'$ of $E'\setminus E$ belongs to $\mathcal H$. For each finite $H'$ there is some index $i'$ such that $E_{i'} \cup H'$ is a monomorphic component of $R'_{\restriction E\cup H'}$. If $H'$ is non-empty it follows  from  Claim \ref{claim1} that this $i'$ is unique (pick any $z\in E'\setminus E$ and observe that $\{z\}$ cannot form a component of $R'_{z}$). If $i'$ depends on $H'$, that is there is some $H''$ and some $i''\not =i'$ with the same property, then for $H= H'\cup H''$ there will be no $i$ such that $E_i\cup  H$ is a monomorphic component  hence $H$ will be a monomorphic component of $R'_{\restriction E\cup H}$, contradicting our hypothesis on $\mathcal H$. Hence  $i'$ is independent of $H'$, meaning that there is  a unique index  $i$ such that  $E_i \cup H$ is a monomorphic component of  $R'_{\restriction E\cup H}$ for every finite subset $H$ of $E'\setminus E$. It follows then that   $E_i\cup (E'\setminus E)$ is a monomorphic part of $R'$ and, in fact, a component of $R'$. Indeed, if not, we may pick $x_i \in  E_i$, $z \in E' \setminus E$ and $F  \subseteq E'\setminus  \{x_i, z\}$ witnessing that there are not equivalent modulo $R'$;  setting   $H= F\cap (E'\setminus E)\cup \{z\}$ we will have that $x_i$ and $z$ are not equivalent modulo $R'_{\restriction E\cup H}$, which is impossible since $E_i\cup H$ is a component of $R'_{\restriction E\cup H}$. 
\end{proof}.

\medskip

As a consequence, we get:

\begin{proposition} \label{cor:manyextensions}Let $R$ be a relational structure with  domain  $E$ and  $S$ be the set of non-negative integers  $n$ such that  $R$ has \textbf{no} monomorphic component of size  $n$, and suppose that   $S$ is infinite. If $R$ has a   $1$-extension $R'$  such that $E'\setminus E$ is  an infinite monomorphic part of $R'$ and the trace over $E$ of the component $C'$ of $R'$ containing $E'\setminus E$ is finite then $R$ has an infinitely many   $1$-extensions   which are pairwise non isomorphic. 
\end{proposition}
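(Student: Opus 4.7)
The plan is to build the sought $1$-extensions as restrictions $R'_H := R'_{\restriction E \cup H}$ of the given extension, where $H$ ranges over a suitable family of finite subsets of $E' \setminus E$. Each such $R'_H$ is automatically a $1$-extension of $R$: given finite $F \subseteq E$ and finite $F'' \subseteq H$, a local isomorphism of $R'$ to $R$ fixing $F$ and sending $F''$ into $E$ (which exists because $R'$ is a $1$-extension of $R$) is already a local isomorphism of $R'_H$ to $R$.

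Let $(E_j)_{j \in I}$ be the monomorphic components of $R$, and apply Lemma \ref{lem:many blocks} to $R'$. In case~(1) there is a unique index $i$ with $E_i \cup (E' \setminus E)$ a component of $R'$ and the remaining $E_j$ still components of $R'$; the assumption that the trace of $C'$ over $E$ is finite forces $|E_i|$ to be finite. Moreover, the argument inside the proof of Lemma \ref{lem:many blocks} actually shows that for every nonempty finite $H \subseteq E' \setminus E$ the components of $R'_H$ are exactly $\{E_i \cup H\} \cup \{E_j : j \neq i\}$: Lemma \ref{lem:trace} applied to the $1$-extension $R'_H$ of $R$ forces each component of $R'_H$ to trace to some $E_j$ on $E$, while the monomorphic part $H$ lies in a single component whose trace must then be $E_i$. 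In case~(2) there is an integer $k_0$ such that for $|H| \geq k_0$ the components of $R'_H$ are $\{H\} \cup \{E_j : j \in I\}$, as directly stated in the lemma.

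In either case the decomposition of $R'_H$ contains exactly one ``new'' component, of size $|E_i| + |H|$ in case~(1) or $|H|$ in case~(2), while the remaining components have sizes among the component sizes of $R$. Since $S$ is infinite, we may select a strictly increasing sequence $s_1 < s_2 < \cdots$ of elements of $S$ with $s_n > |E_i|$ in case~(1), or $s_n \geq \max(k_0, 1)$ in case~(2), and set $|H_n| := s_n - |E_i|$ in case~(1) or $|H_n| := s_n$ in case~(2). The new component of $R'_{H_n}$ then has size $s_n \in S$, distinct from every other component size of $R'_{H_n}$ (all of which are sizes of components of $R$, hence outside $S$). By Lemma \ref{lem:size component}(1) --- whose reasoning applies equally to isomorphisms between distinct structures, since the monomorphic decomposition is defined intrinsically from the structure --- the multiset of sizes of monomorphic components is an isomorphism invariant. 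For $n \neq m$ the two multisets coincide on the $R$-part but differ in the extra entries $s_n$ versus $s_m$, so $R'_{H_n} \not\cong R'_{H_m}$, yielding the required infinite family of pairwise non-isomorphic $1$-extensions. I expect the chief technical point to be extracting the decomposition of $R'_H$ in case~(1) of Lemma \ref{lem:many blocks}, which is implicit in the proof but not recorded as an explicit conclusion of the lemma.
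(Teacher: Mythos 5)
Your proof is correct and follows the same overall strategy as the paper --- realize the new $1$-extensions as restrictions $R'_{\restriction E\cup H}$ for finite $H\subseteq E'\setminus E$, identify their monomorphic decompositions via Lemma \ref{lem:many blocks}, and separate them by the multiset of component sizes using the hypothesis that $S$ is infinite --- but you invoke Lemma \ref{lem:many blocks} differently. The paper applies it to the pair $(R'_{\restriction E'\setminus C'},\, R')$: it first deletes the whole component $C'$, including its finite trace on $E$, and treats $C'$ itself as the added monomorphic part; since $C'$ is already a maximal component of $R'$, case~(1) of the lemma is then impossible, and only case~(2) is used, with $H\subseteq C'$ chosen to contain $C'\cap E$ so that the restriction still extends $R$. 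You instead apply the lemma to $(R,R')$ and must therefore handle case~(1), where $C'=E_i\cup(E'\setminus E)$ and the finiteness of the trace forces $E_i$ to be finite. Your description of the decomposition of $R'_H$ in that case is right, but the one step you leave implicit --- that the component of $R'_H$ containing $H$ has trace exactly $E_i$ on $E$, rather than some other $E_j$ or the empty set --- deserves a sentence: for $z\in H$ and $x\in E_i$ one has $z\simeq_{R'}x$, and since $\simeq_{R'_H}$ quantifies over fewer finite witness sets than $\simeq_{R'}$, this yields $z\simeq_{R'_H}x$; combined with Lemma \ref{lem:trace} this pins the trace down to $E_i$ and hence the component down to $E_i\cup H$, with the remaining components of $R'_H$ being exactly the $E_j$, $j\neq i$. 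With that line added your case analysis is complete, and the concluding count (each $R'_{H_n}$ has exactly one component whose size lies in $S$, namely $s_n$, while all other component sizes are sizes of components of $R$ and hence outside $S$) is the same invariance-of-the-decomposition argument the paper uses. The two routes buy essentially the same thing; the paper's re-basing is slightly slicker because it avoids the case split, while yours is more self-contained in that it works directly with the components of $R$ and makes explicit what the decomposition of each $R'_H$ actually is.
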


\begin{proof}  Let $R'_{C'}= R'_{\restriction E'\setminus C'}$. We apply Lemma \ref{lem:many blocks} to $R'_{-C}$ and $R'$. Since $C'$ is a monomorphic component of $R'$ then for no monomorphic component $E_i$ of $R_{-C'}$ can $E_i \cup C'$
be a monomorphic component of $R'$, that is, Case 1 of Lemma \ref{lem:many blocks} cannot happen.

Thus the second case must hold. That is, there is some non-negative  integer $k$,   $k\leq \vert C'\vert$ such that for every  $H\subseteq C'$, with at least $k$ elements the monomorphic decomposition of  $R'_H= R'_{\restriction (E'\setminus C')\cup H}$ is made of  the $E_i$'s and of  $H$. For distinct values of $k=\vert H\vert$,  with $k\in S$ the $R_H$'s cannot be isomorphic,  otherwise the decomposition of a  $R_H$ will be carried over the decomposition of an $R_H'$. Taking for $H$ subsets of $C'$ containing $C'\cap E$ yields the desired conclusion.   
\end{proof}

We need  the following result 

\begin{lemma}\label{lem:adding a block} Let  $R= (E, (\rho_i)_{i\in I})$ be a relational structure of signature $\mu= (n_i)_{i\in I}$ on an infinite set $E$ and $A$ be an infinite subset of $E$.  If the profile is finite, 
then,  on any  superset  $E'$ of $E$ such that $E'\setminus E$ is infinite, there is some   extension $R'$ of  $R$ 
 such that:
 
   \begin{enumerate} [(a)]
   
\item $E'\setminus E$ is a strong monomorphic part  of $R'$ and 
 
\item  for every finite subset $F$ of $E'$ there is some local isomorphism of $R'$ which fixes $E\cap F$ and maps $F\setminus E$ into $A$.

\end{enumerate}
\end{lemma}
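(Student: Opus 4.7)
The plan is to construct $R'$ via the compactness theorem of first-order logic, with finite satisfiability supplied by Fra\"{\i}ss\'e's chainability theorem (Theorem \ref{theo:chainability}) applied to $R_{\restriction F_0 \cup A}$ for finite $F_0 \subseteq E$ and finite reducts of $R$. Fix a linear ordering $<$ on $E'\setminus E$ and work in the first-order language $\mathcal L$ that augments the signature of $R$ with a constant symbol for every element of $E'$ (we use the name $e$ for both an element of $E$ and its constant, and $c_x$ for the constant corresponding to $x\in E'\setminus E$). Let $T$ be the $\mathcal L$-theory consisting of: the atomic diagram of $R$; the distinctness statements $c_x\neq c_{x'}$ for $x\neq x'$ in $E'\setminus E$ and $c_x\neq e$ for every such $x$ and every $e\in E$; for every finite $F_0\subseteq E$, finite reduct $I'\subseteq I$, integer $k$, and pair of $<$-increasing tuples $x_1<\dots<x_k$ and $x'_1<\dots<x'_k$ in $E'\setminus E$, the indiscernibility equivalences $\phi(\bar e,c_{x_1},\dots,c_{x_k})\leftrightarrow\phi(\bar e,c_{x'_1},\dots,c_{x'_k})$ for every atomic formula $\phi$ using only relations from $I'$ and every tuple $\bar e$ from $F_0$; and, for each such data, the finite disjunction over the atomic $I'$-types $\tau$ over $F_0$ realized by some tuple in $A^k$, of the statement that $(c_{x_1},\dots,c_{x_k})$ realizes $\tau$. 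The finite profile of $R$ renders each such disjunction finite, so $T$ is genuinely first-order.

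To establish consistency of $T$, consider a finite fragment $T_0$: it mentions only a finite $F_0\subseteq E$, a finite reduct $I'\subseteq I$, and finitely many constants $c_{x_1},\dots,c_{x_N}$ with $x_1<\dots<x_N$. Applying Fra\"{\i}ss\'e's chainability theorem to $R_{\restriction F_0\cup A}$ with $F:=F_0$, an arbitrary linear order on $A\setminus F_0$, and the finite index set $I'$, one obtains an infinite chainable part $B\subseteq A\setminus F_0$ of $R^{I'}_{\restriction F_0\cup B}$. Pick distinct $b_1<\dots<b_N$ in $B$ and interpret $c_{x_i}$ by $b_i$: chainability of $B$ forces every ordered $k$-subtuple of $(b_1,\dots,b_N)$ to share the same atomic $I'$-type over $F_0$, verifying the indiscernibility axioms in $T_0$; the realizability axioms hold since each $b_i\in A$; and the distinctness statements in $T_0$ hold because the $b_i$ lie in $B\subseteq A\setminus F_0$ and are pairwise distinct. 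Compactness then yields a model of $T$, and its substructure on the interpretations of the constants (identifying $c_x$ with $x$) is the desired $R'$ on $E'$, satisfying $R'_{\restriction E}=R$.

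To verify property (a), two $k$-subsets of $E'\setminus E$ enumerated in $<$-increasing order as $\{x_1<\dots<x_k\}$ and $\{x'_1<\dots<x'_k\}$ are matched by $x_i\mapsto x'_i$; extended by the identity on $E$, this map is a local isomorphism of $R'$, since every atomic instance on its domain mentions a single relation and finitely many $E$-elements and is therefore governed by an indiscernibility axiom. To verify property (b), given finite $F\subseteq E'$ with $F_0:=F\cap E$ and $F\setminus E=\{x_1<\dots<x_k\}$, finite profile makes the set $T_A$ of full atomic types over $F_0$ realized by $k$-tuples in $A^k$ finite; by the realizability axioms each finite-reduct restriction of the full atomic type $\tau$ of $(c_{x_1},\dots,c_{x_k})$ over $F_0$ in $R'$ equals the corresponding restriction of some element of $T_A$, and any sufficiently large finite reduct $I'$ separates the elements of $T_A$, which forces $\tau\in T_A$. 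Hence some $(a_1,\dots,a_k)\in A^k$ realizes $\tau$, and the map $x_i\mapsto a_i$ extended by the identity on $F_0$ is the required local isomorphism. The step I expect to be the main subtlety is this final type-realization argument, whose validity relies crucially on the finite-profile hypothesis (without which one could have infinitely many distinct atomic types, so that finite-reduct realizability in $A$ would not upgrade to full realizability).
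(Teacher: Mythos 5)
Your proof is correct and follows essentially the same route as the paper, which does not write out the argument but only refers to Lemme III-2.2.3 of Pouzet's 1979 paper for the case $A=E$ and states that the proof combines Fra\"{\i}ss\'e's chainability theorem with the compactness theorem of first-order logic. Your write-up is a faithful, detailed instantiation of exactly that strategy, including the correct use of the finite-profile hypothesis to upgrade finite-reduct realizability in $A$ to realizability of the full atomic type.
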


In the case $A=E$, this is Lemme III-2.2.3 of \cite{pouzet79}. The  proof uses Theorem \ref {theo:chainability} of Fra\"iss\'e  and The Compactness Theorem of First Order Logic. In our case, the same proof applies.

We say that an extension $R'$ of $R$ as above is a \emph{good extension above $A$}.
\begin{lemma}\label{good extension}
Let  $R$ be prehomogeneous on a countable set $E$, $O$ be  an infinite orbit of a singleton w.r.t. $Aut(R)$ that meets infinitely many components of $R$, $A$ a subset of  $O$ such that $\vert A\cap C\vert = 1$ for each component $C$ of $R$ meeting $A$,  $R'$  an extension of $R$ to a superset $E'$ that is good above $A$ and $C'$  the component of $R'$ containing $E'\setminus E$.  Then  $C'\cap E$ contains at most one element and this element belongs to  $A$. \end{lemma}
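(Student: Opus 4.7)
The plan begins by observing that property~(b) makes $R'$ a $1$-extension of $R$: for any finite $F\subseteq E$ and any $z\in E'\setminus E$, applying~(b) to $F\cup\{z\}$ supplies a local isomorphism of $R'$ fixing $F$ and sending $z$ into $A\subseteq E$. By Lemma~\ref{lem:trace}, each monomorphic component of $R'$ meets $E$ in either the empty set or a whole monomorphic component of $R$, so $C'\cap E$ is either empty or equals a single component $C_0$ of $R$.

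Now assume $C_0\ne\emptyset$. Since $C'$ contains the infinite set $E'\setminus E$, it is an infinite monomorphic component of $R'$ and hence strongly monomorphic by Proposition~\ref{prop: strong monomorphic component}(f): for any $k$-subsets $B,B'\subseteq C'$, every isomorphism $R'\restriction B\cong R'\restriction B'$ extends by the identity on $E'\setminus C'=E\setminus C_0$ to a local isomorphism of $R'$.

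The main argument to show $|C_0|=1$ combines this strong monomorphy, property~(b), and prehomogeneity. Pick $x\in C_0$ and, toward a contradiction, suppose $y_1,\dots,y_\ell\in C_0\setminus\{x\}$ for some $\ell\ge 1$. Let $F_0$ be a prehomogeneity threshold for $\{x\}$ in $R$, enlarged so that $F_0\supseteq\{x,y_1,\dots,y_\ell\}$ (supersets of thresholds remain thresholds), and pick distinct $z_0,z_1,\dots,z_\ell\in E'\setminus E$. By strong monomorphy, some isomorphism sending $x\mapsto z_0$ and $y_i\mapsto z_i$ of the restrictions $R'\restriction\{x,y_1,\dots,y_\ell\}$ onto $R'\restriction\{z_0,z_1,\dots,z_\ell\}$ extends by identity on $E\setminus C_0$ to a local isomorphism $\Psi$ of $R'$ whose domain contains $F_0$. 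Applying~(b) to $\Psi(F_0)=(F_0\setminus\{x,y_1,\dots,y_\ell\})\cup\{z_0,\dots,z_\ell\}$ yields a local isomorphism $h$ of $R'$ fixing $F_0\setminus\{x,y_1,\dots,y_\ell\}$ and mapping the $z_i$ bijectively to distinct elements $a_0,\dots,a_\ell\in A$. The composition $\tau:=h\circ\Psi|_{F_0}$ is then a local isomorphism of $R$ on $F_0$, identity on $F_0\setminus\{x,y_1,\dots,y_\ell\}$, sending $x\mapsto a_0$ and $y_i\mapsto a_i$. By prehomogeneity of $R$ applied to $\{x\}$, $\tau$ extends to $\sigma\in\Aut(R)$ with $\sigma(x)=a_0$ and $\sigma(y_i)=a_i$. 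Since automorphisms preserve the monomorphic decomposition (Lemma~\ref{lem:size component}(1)), all the $a_i$'s lie in the single component $\sigma(C_0)$, and in $A$; the selector property $|A\cap C|\le 1$ thus forces $a_0=a_1=\cdots=a_\ell$, contradicting their distinctness. Hence $\ell=0$, so $C_0=\{x\}$.

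The hard part will be showing $x\in A$. Iterating the construction with $\ell=0$ yields, for every finite $F\subseteq E\setminus\{x\}$, some $a_F\in A$ with $R\restriction F\cup\{x\}\cong R\restriction F\cup\{a_F\}$. The goal is to find a single $a\in A$ satisfying this for every $F$, for then $a\simeq_R x$ puts $a$ in $x$'s $R$-component $C_0=\{x\}$, giving $a=x\in A$. I expect this to require a compactness-style argument: the sets $M(F):=\{a\in A:R\restriction F\cup\{x\}\cong R\restriction F\cup\{a\}\}$ are all non-empty and enjoy the finite intersection property (since $M(F_1)\cap M(F_2)\supseteq M(F_1\cup F_2)$), and the finite profile of $R$ --- which in the broader setting of this paper amounts to $\aleph_0$-categoricity by Theorem~\ref{thm:unifprehomtest} --- should allow one to extract a common element $a\in\bigcap_F M(F)$, completing the proof.
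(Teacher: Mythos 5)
Your mechanism for bounding $\vert C'\cap E\vert$ is essentially the paper's: push a finite piece of $C_0=C'\cap E$ into $E'\setminus E$ using strong monomorphy of $C'$, pull it back into $A$ using goodness, upgrade to an automorphism of $R$ by prehomogeneity, and contradict preservation of the component decomposition. But the upgrade step is misapplied. Prehomogeneity for the set $\{x\}$ with threshold $F_0$ only yields an automorphism extending $\tau_{\restriction \{x\}}$; it gives $\sigma(x)=a_0$ but says nothing about the $\sigma(y_i)$, so your conclusion ``$\sigma(y_i)=a_i$'' does not follow, and without it you only learn that some component of $R$ meets $A$ --- no contradiction. (Enlarging a threshold for $\{x\}$ keeps it a threshold for $\{x\}$, not for $\{x,y_1,\dots,y_\ell\}$.) The repair is to take $F_0$ to be a prehomogeneity threshold for the tuple $\{x,y_1,\dots,y_\ell\}$ itself; the paper does exactly this with a two-element subset $X$ of $C'\cap E$, producing an automorphism that moves one element of $X$ into $A\setminus C$ while keeping the other inside $C$. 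You should also arrange $F_0\cap C_0=\{x,y_1,\dots,y_\ell\}$, since otherwise the domain of $\Psi$ (which is the identity only off $C'$) need not contain $F_0$.

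The second half is a genuine gap: you do not prove the membership claim, and the compactness sketch is problematic. The sets $M(F)$ do have the finite intersection property, but nothing in the lemma's hypotheses --- which do not include oligomorphy; that is only assumed where the lemma is applied --- produces a point of $\bigcap_F M(F)$, and an infinite decreasing family of non-empty subsets of $A$ can well have empty intersection. The paper takes a different and much shorter route: it shows $C'\cap E\subseteq O$ directly. If $b\in (C'\cap E)\setminus O$, the same push-into-$E'\setminus E$ / pull-into-$A$ composition, restricted to a prehomogeneity threshold for $\{b\}$, gives a local isomorphism defined on $b$, extendible to that threshold, sending $b$ into $A\subseteq O$; by prehomogeneity it extends to an automorphism, which is impossible since $O$ is an orbit and orbits are disjoint. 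A single finite witness suffices there precisely because membership in an orbit is decided by one automorphism, which is why the paper aims at $O$ rather than trying to certify $x\simeq_R a$ for one fixed $a\in A$ against all finite sets $F$ simultaneously; note that only ``$C'\cap E$ is finite and contained in $O$'' is used downstream in the proof of Theorem \ref{thm infinitely many}(a).
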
 

\begin{proof}Since $E'\setminus E$ is a monomorphic part of $R'$, it is included in a component of $R'$, say $C'$. Either  $C'= E'\setminus E$, that is, $E'\setminus E$ is a component, or not. In the first case $C'\cap E= \emptyset$ is a subset of $A$. In the later case, since $R'$ is a $1$-extension of $R$, the decomposition of $R$ into components is induced by the decomposition of $R'$, hence $C:= C'\cap E$ is a component of $R$. We prove first that $C \subseteq O$.  Suppose not. Let $b\in C\setminus O$.  Since $R$ is prehomogeneous, there is some finite set $F\subseteq E$ containing $b$ such that every local isomorphism $f$ of $R$ defined on $b$ that extends to $F$ to a local isomorphism of $R$ can be extended to an automorphism of $R$. Since $b\not \in O$ and  $O$ is an orbit, no local isomorphism $f$ can map $b$ into $O$ and extend to $F$. Since $C'$ is an infinite component of $R'$, it is a strongly monomorphic part, hence there is map $h$ from  $F\cap C'$ into $E'\setminus E$ that extends by the identity on $F\setminus C'$  to a local isomorphism of $R'$. Since $R'$ is a good extension above $A$, there is a local isomorphism $g$ of $R'$ that fixes $F\setminus C'$ and sends $h(F\cap C')$  into $A$. But then $g\circ h$ maps $b$ into $A$ thus into $O$. A contradiction. Suppose that  $C$ contains at least two elements. Since $C$ is a component, every  automorphism of $R$ sending some   element of $C$ into $C$ sends all the others into $C$. Since $R$ is  prehomogeneous, if $X$ is a $2$-element subset of $C$ there is some finite set $F\subseteq E$ containing $X$ such that every local isomorphism $f$ of $R$ defined on $X$ that extends to $F$ to a local isomorphism of $R$ can be extended to an automorphism of $R$. Furthermore, if $X'$ is an other $2$-element subset of $C$, we may find $F'$,  image of $F$ by some automorphism of $R$,  satisfying the same property.  Fix a two element subset $X$ of $C$. Since $C'$ is an infinite component of $R'$, it is a strongly monomorphic part, hence there is map $h$ from  $F\cap C$ onto $((F\cap C)\setminus \{a\})\cup \{b\}$ 
for  some   $a\in X\cap C$, $b\in E'\setminus E$,  that  extends by the identity on $F\setminus C$  to a local isomorphism of $R'$. Since $R'$ is a good extension above $A$, there is a local isomorphism $g$ of $R'$ that fixes $F\cup (A\cap C)$ and sends $h(F\cap C)$  into $A$. But, since $\vert A\cap C\vert =1$  then $g\circ h$ maps $a$ into $A\setminus C$ and the other elements of $F\cap C$ into $C$. A contradiction. \end{proof}
\begin{remark}
From this lemma it follows that $E'\setminus E$ is a component of $R'$ whenever the components of $R$ meeting $O$ are non-trivial. 
But  it is not true in general that $E'\setminus E$ is a component of $R'$. For an example,  take for $R$ the Rado graph, fix a vertex, say $a$, add an infinite independent set, say $H$, and for every $x$ in $R$, if $x$ is  joined to $a$ by an edge, join $x$ to every vertex of $H$, otherwise $x$ is  joined to no vertex of $H$. Then the resulting graph $G'$ is a $1$-extension over $E$ (as well as  the non neighbour of $a$) and $H\cup \{a\}$ is a component of $R'$. However, there are extensions of the Rado graph for which $E'\setminus E$ is a component. 
\end{remark}
\begin{problem}\label{lemma:key}
Is it true that a   countable  prehomogeneous structure $R$ with  infinitely many components  and $Aut(R)$ oligomorphic has a $1$-extension $R'$ with $E' \setminus E$ an infinite component?  \end{problem}

\subsection{Proof of $(a)$ of Theorem \ref{thm infinitely many}}\label{proof of a} 
Since $Aut(R)$ is oligomorphic, it has only finitely many orbits of singletons. One, say $O$, meets infinitely many  classes. Let $A$ be a subset $A$ of  $O$ such that $\vert A\cap C\vert = 1$ for each component $C$ of $R$ meeting $A$.  Since $Aut(R)$ is oligomorphic, the profile of $R$ is finite, hence Lemma \ref{lem:adding a block} applies and there is some extension $R'$ of $R$ above  $A$. Then, according to Lemma \ref{good extension}, then either   $E'\setminus E$ is a component of $R'$ or the component $C'$ of $R'$ containing $E'\setminus E$ is made of $E'\setminus E$ and a singleton belonging to $O$. Since  $C'\cap E$ is a finite component of $R$ we may then apply Proposition  \ref{cor:manyextensions}. 
\hfill $\Box$

\subsection{Adding infinitely many monomorphic components. A proof of $(b)$ of Theorem \ref{thm infinitely many}}\label{proof of b}
\begin{lemma} \label{addinginfinitelymany} If $R$ is countable, uniformly prehomogeneous, with infinitely many infinite equivalence classes of $\simeq_R$,  one may  select  a countable union $D$ of infinite equivalence classes   such that $R$ is embeddable into 
$R_{\restriction E\setminus D}$
\end{lemma}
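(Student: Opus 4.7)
\proof
The plan is to choose $D$ as the union of ``half'' of a suitable infinite orbit of $\simeq_R$-classes under $\Aut(R)$, and then to verify via the Tarski--Vaught criterion that $R' := R_{\restriction E\setminus D}$ satisfies $\operatorname{Th}(R)$; once this is done, $\aleph_0$-categoricity (Theorem~\ref{thm:unifprehomtest} (v)) will give $R'\cong R$ and in particular an embedding $R\hookrightarrow R'$ as required.

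Since $R$ is uniformly prehomogeneous and $\Aut(R)$ is oligomorphic (Theorem~\ref{thm:unifprehomtest}), the induced action of $\Aut(R)$ on the set of $\simeq_R$-classes has finitely many orbits. Because there are infinitely many infinite $\simeq_R$-classes by hypothesis, at least one such orbit $\mathcal{O}$ consists of infinitely many pairwise $\Aut(R)$-equivalent (necessarily infinite) classes; enumerate $\mathcal{O} = \{C_n : n \in \N\}$. I would then partition $\mathcal{O}$ into two infinite subfamilies $\mathcal{O}^{0}$ and $\mathcal{O}^{1}$, chosen by a diagonal construction described below, and set $D := \bigcup\mathcal{O}^{0}$, which is by construction a countable union of infinite $\simeq_R$-classes.

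To verify $R'\preceq R$ by the Tarski--Vaught test, using that $\operatorname{Th}(R)$ is axiomatized by $\forall\exists$-sentences with universal sentences inherited by substructures, it suffices to show that for every finite tuple $\bar a\subset E\setminus D$ and every orbit $\Omega$ of $\operatorname{Stab}_{\Aut(R)}(\bar a)$ on $E$ one has $\Omega\cap(E\setminus D)\neq\emptyset$. If $\Omega$ is disjoint from $\bigcup\mathcal{O}$ this is automatic, since $D\subseteq\bigcup\mathcal{O}$. Otherwise, by transitivity of $\Aut(R)$ on $\mathcal{O}$ the $\Aut(R)$-orbit containing $\Omega$ meets every component of $\mathcal{O}$, and by oligomorphy $\operatorname{Stab}(\bar a)$ has only finitely many orbits on the components of $\mathcal{O}$ disjoint from $\bar a$. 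The diagonal construction over all finite $\bar a$ is designed so that each such $\operatorname{Stab}(\bar a)$-orbit on components contributes at least one component to $\mathcal{O}^{1}$, while infinitely many components remain in $\mathcal{O}^{0}$.

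The main obstacle lies in executing this diagonal argument: the delicate case is when $\operatorname{Stab}(\bar a)$ has small (even singleton) orbits on components of $\mathcal{O}$ disjoint from $\bar a$, which cumulatively across all finite $\bar a$ could in principle exhaust $\mathcal{O}$. The key sub-claim I would need to establish is that every $\operatorname{Stab}(\bar a)$-orbit on components of $\mathcal{O}$ disjoint from $\bar a$ is actually infinite; this should follow from the fact that $\mathcal{O}$ is a single $\Aut(R)$-orbit of infinite $\simeq_R$-classes together with the strong monomorphy of infinite components (Proposition~\ref{prop: strong monomorphic component}) and the analysis in Proposition~\ref{compprehomogeneous}. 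Once the sub-claim is in hand, splitting each such orbit into two infinite halves between $\mathcal{O}^{0}$ and $\mathcal{O}^{1}$ delivers the required partition, the Tarski--Vaught test is satisfied, and $\aleph_0$-categoricity produces an isomorphism $R'\cong R$, completing the proof of the embedding $R\hookrightarrow R_{\restriction E\setminus D}$.
\endproof
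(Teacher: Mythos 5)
Your strategy aims at a conclusion that is strictly stronger than the lemma and false in general, so the gap is not just in the sub-claim but in the overall plan. You want to choose $D$ so that $R_{\restriction E\setminus D}\preceq R$, hence $R_{\restriction E\setminus D}\cong R$ by $\aleph_0$-categoricity; but the lemma only asserts an \emph{embedding} of $R$ into $R_{\restriction E\setminus D}$, and there are structures satisfying the hypotheses for which no admissible $D$ makes $R_{\restriction E\setminus D}$ isomorphic to $R$. Take $R$ to be the disjoint union of infinitely many infinite stars: centres $a_n$ ($n\in\N$) and infinite independent sets $C_n$, each $c\in C_n$ joined to $a_n$ and to nothing else. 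This $R$ is uniformly prehomogeneous with oligomorphic automorphism group; its $\simeq_R$-classes are the singletons $\{a_n\}$ and the sets $C_n$, so the infinite classes are exactly the $C_n$ and any admissible $D$ has the form $\bigcup_{n\in T}C_n$ with $T\subseteq\N$ infinite. Removing such a $D$ isolates every $a_n$ with $n\in T$, so $R_{\restriction E\setminus D}\not\cong R$, and the Tarski--Vaught test fails concretely: for $\bar a=(a_n)$ with $n\in T$, the $\operatorname{Stab}(\bar a)$-orbit consisting of the neighbourhood of $a_n$ is $C_n\subseteq D$. The same example refutes your key sub-claim: $\operatorname{Stab}(a_0)$ fixes $C_0$ setwise, so $\{C_0\}$ is a \emph{singleton} orbit on the components of $\mathcal O=\{C_n:n\in\N\}$ disjoint from $\bar a=(a_0)$. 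The lemma nonetheless holds here, via the non-surjective embedding sending the $n$-th star onto the $\sigma(n)$-th star for any injection $\sigma\colon\N\to\N\setminus T$; the moral is that one must settle for an embedding whose image misses whole classes that remain in $E\setminus D$.

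The paper's proof goes in the opposite direction, upward rather than downward. Using Robinson's diagram method and the compactness theorem --- together with the fact that $\simeq_R$ coincides with $\simeq_{\leq k,R}$ for some $k$, so that inequivalence is expressed by a single existential formula $F(x,y)$ --- it produces an extension $R'$ of $R$ on a base $E'$ in which infinitely many new infinite $\simeq_{R'}$-classes (the classes of new constants $c_{i,j}$) lie entirely inside $E'\setminus E$; consistency of the required sentences is witnessed inside $R$ itself precisely because $R$ already has infinitely many infinite classes. Since $R$ is uniformly prehomogeneous with finite profile, it is the unique countable existentially closed structure of its age, so $R'\cong R$; transporting the union of the new classes back along this isomorphism yields $D$, and $R=R'_{\restriction E}$ embeds into $R'_{\restriction E'\setminus D'}\cong R_{\restriction E\setminus D}$. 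Any repair of your argument would have to abandon the elementary-substructure target and prove only this embedding statement, which essentially forces the paper's construction.
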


\begin{proof} We prove  a slightly different statement. Namely, under the conditions of the lemma, $R$  has an extension $R'$ which is isomorphic to $R$ and such that $E' \setminus  E$, the difference of their bases,  contains infinitely many infinite components of $R'$.  
For that,  we will use the diagram method due to Robinson and 
apply the compactness theorem of first order logic. 

We enumerate the  elements of $E$ to form  a sequence  $a_n$, $n<\omega$. To the language of $R$ we add these elements as constants and we also add a new infinite set of constants  $c_{i, j}$, $i,j<\omega$. We add some sentences and  we prove that they form a consistent set, and thus compactness of first order logic ensures  that there is some countable model. Due to our choice of sentences, this model will be a model of the universal theory of $R$, hence it will extend to a copy $R'$of $R$. The constants $c_{i,j}$ will satisfy the following three properties: 
\begin{enumerate}
\item   $c_{i,j}\not =c_{i',j'}$ when $(i,j)\neq (i',j')$; 
\item   $c_{i,j}\simeq_{R'} c_{i',j'}$ if and only if $i=i'$; 
\item   $c_{i,j}\not \simeq_{R'} e$ for $e\in E$.  
\end{enumerate}
Hence,  $E' \setminus  E$, the difference of their bases,  contains infinitely many infinite components of $R'$ which avoid $E$.   %
%
The sentences we add fall into three categories.  

$a)$ Those of the  diagram of $R$. That is, we add the sentences of the form  $\rho_i(a_{i, 1}, \dots, a_{i,  m_i})$ for every  $(a_{i,1}, \dots, a_{i, m_i})\in \rho_i$, the sentences of the form  $\neg \rho_i(a_{i,1}, \dots, a_{i,m_i})$ for every $(a_{i,1}, \dots,a_{i,m_i})\not \in \rho_i$  and the sentences of the form $a_i\not =a_j$ for every  $i \not =j$. Clearly, any model of the diagram will be an extension of $R$. 

$b)$ The sentences of the form $\forall x_1, \dots, \forall x_p \neg F(x_1, \dots, x_p)$ where $F$ is a quantifier free formula in the language of $R$ describing a finite reduct which cannot be embedded in  $R$.  These sentences, added to the previous one, form a consistent set;  indeed $R$ is a model.  Furthermore, any model $R'$ is an extension, and in fact a  $1$-extension, hence a model of the universal theory of $R$. 

$c)$ Sentences  expressing that $(1)$, $(2)$ and $(3)$ hold.
%

For that,  we note that there is an integer $k$ such that $\simeq_{\leq k, R}$ and $\simeq_R$ coincide (Lemma \ref{lem:definability}). Since the profile of $R$ is finite,  it follows that there is an existential formula $F(x,y)$ (using at most $k$ quantifiers) such that $a\not \simeq_R b$ if and only if $F(a,b)$ holds in $R$. We  add to the diagram of $R$ the sentences  $F(c_{i,j},c_{i'j'})$ for $i<i'$,  $F(a_i, c_{i',j})$ for all $i, i'$ and $\neg F(c_{i,j}, c_{i,j'})$ for all $i,j,j'$.

This set of sentences added to the previous ones is consistent. Indeed, taking finitely many, they will determine a finite subset $A$ of $E$ and a finite subset $C$ of the $(i,j)$'s and will define an equivalence relation on $C$. Since $R$ contains infinitely many infinite components,  there are infinitely many that are disjoint from $A$, hence we may select in these components  elements reproducing the structure of the equivalence relation over $C$ to obtain the consistency of this finite set of sentences.   As  noted above, the compactness theorem of first order logic will give a copy $R'$ of  $R$ extending $R$. In  that copy, two elements $a,b$   satisfy  $a\simeq_{R'}b$ if and only if  $\neg F(a,b)$. Since  $F(x,y)$ is an existential formula,  the  $c_{i,j}$'s will not be $\simeq_{R'}$-equivalent and not  $\simeq_{R'}$-equivalent to any element of $E$.  In that copy,  the union $D$ of the equivalence classes of the $c_{i,j}$'s is disjoint from $R$. 
\end{proof}

With Lemma \ref{addinginfinitelymany} and Lemma \ref{lem:infinitelymany} we get that  $R$ has $2^{\aleph_0}$  siblings. Hence,  $(b)$ of Theorem \ref{thm infinitely many} holds.  

\section{Proof of Theorem \ref{thm:main}}\label{section:maintheorem}

Reassembling  Theorem \ref{prop:keyfinite}, Theorem  \ref{thm:monomorphic-siblings} and Theorem \ref{thm infinitely many}, we get: 

\begin{theorem}\label{thm:main2} Let $R$ be  a countable prehomogeneous relational structure such that $Aut(R)$ is  oligomorphic. 
Then $R$ has $2^{\aleph_0}$  siblings if
 $R$ has some infinite monomorphic component which is not an indiscernible set of $R$, or infinitely many infinite components.  If not then $R$ has one sibling provided that  $R$ has finitely many components, and infinitely many siblings otherwise. 
%
%
\end{theorem}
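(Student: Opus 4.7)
The plan is to reassemble Theorem \ref{prop:keyfinite}, Theorem \ref{thm:monomorphic-siblings}, and Theorem \ref{thm infinitely many} through a case analysis on the monomorphic decomposition of $R$. The four clauses of the statement correspond to disjoint conditions on this decomposition, and each will be handled by invoking one of the results already established.

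First, if $R$ possesses an infinite monomorphic component that fails to be a strongly indiscernible subset of $R$, then Theorem \ref{prop:keyfinite} immediately gives $\sib(R)=2^{\aleph_0}$. So assume every infinite monomorphic component is strongly indiscernible. Then split on the number of infinite monomorphic components: if there are infinitely many, Theorem \ref{thm infinitely many}(b) yields $\sib(R)=2^{\aleph_0}$.

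It remains to treat the case where only finitely many monomorphic components are infinite, each strongly indiscernible. Here I would split on the total number of components. If the monomorphic decomposition is finite, then the partition of $E$ consisting of the (finitely many) infinite components together with the singletons of the elements lying in the finite components is a finite partition of $E$. Every permutation fixing each block of this partition is an automorphism: singletons are trivially stable, and strong indiscernibility permits extending any permutation of an infinite component by the identity on its complement to an automorphism, so a composition of such extensions realizes an arbitrary permutation preserving the blocks. Thus $R$ is finitely partitionable, and by the $\aleph_0$-categorical form of the Hodkinson-Macpherson theorem recalled in the introduction, every structure with the age of $R$ is isomorphic to $R$; in particular $\sib(R)=1$. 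If the decomposition is infinite, then since only finitely many components are infinite there must be infinitely many finite components, and Theorem \ref{thm infinitely many}(a) delivers infinitely many siblings.

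The proof is essentially a bookkeeping assembly of already-proved facts. The only step that is not a pure citation is the verification that, in the last case, the proposed block structure indeed witnesses finite partitionability; this is straightforward from the definition of strong indiscernibility. Hence no new obstacle arises, and the exhaustiveness of the four branches completes the argument.
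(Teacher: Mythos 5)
Your proposal is correct and follows essentially the same route as the paper, whose proof of this theorem is precisely the announced reassembly of Theorems \ref{prop:keyfinite}, \ref{thm:monomorphic-siblings} and \ref{thm infinitely many}. Your explicit verification of finite partitionability in the case of finitely many components (infinite indiscernible components as blocks plus singletons) is exactly the argument the paper gives inside Case 2 of the proof of Theorem \ref{thm:monomorphic-siblings}, so nothing new is needed.
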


Theorem  \ref{thm:main} follows.

%

\section{Conclusion} \label{section:thelast}

%
%
%
\subsection{A possible improvement of Theorem \ref{prop:keyfinite}.}   
 Let us recall that a relational structure $R$ with base $E$ is \emph{cellular} \cite{schmerl} if there is a finite subset $F\subset E$  and an enumeration $(a_{(x,y)})(x,y)\in V \times L $ of the elements of $E \setminus  F$ by
a set $V \times  L$, where $V$ is finite  such that for every bijective map 
 $f$ of $L$ the map $(1_V, f)$ extended by the identity on $F$ is a local isomorphism
of $R$ (the map $(1_V, f)$ is defined by $(1_V , f)(a_{(x, y)}) := a_{(x, f(y))}$).
Note that a finitely partitionnable structure is cellular, but the converse does not hold. 

The age $\mathcal A$ of a cellular structure $R$ is well-quasi-ordered (w.q.o, for short), that is every infinite sequence $(S_n)_{n\in \N}$ of members of $A$ contains an increasing sequence w.r.t embeddability. In fact, the set $\mathcal A_{[m]}$ of structures $S\in \mathcal A$ with $m$ unary relations added, is also w.q.o. for every $m\in \N$. It follows from Th\'eor\`eme 3.4.  p.697 of \cite {pouzet} that if for an age $\mathcal A$, the set $\mathcal A_{[m^{-}]}$ of structures $S\in \mathcal A$ with $m$ constants  added, is  w.q.o. for every $m\in \N$ then there is a  uniformly prehomogeneous structure with age $\mathcal A$ (we do not know if these two w.q.o. conditions are equivalent). In  particular if $R$ is cellular, some $R'$ equimorphic to $R$ is uniformly prehomogeneous (and cellular). 
If $R$ is cellular then $\sib(R)$ is at most countable; this is a straightforward consequence of a result of \cite {macpherson-pouzet-woodrow} (see below).

Under this setting we propose the following problem.
\begin{problem}\label{prob:improvement}
Let $R$ be a countable and $\aleph_{0}$-categorical relational structure. Prove that  either $\sib(R)=1$, $\aleph_0$ or $2^{\aleph_0}$. Furthermore, show that $\sib(R)\leq\aleph_0$ if and only if  $R$ is cellular. \end{problem}
%
Note that  Theorem \ref{prop:keyfinite} does not give  the value of $\sib(R)$ when $R$ has infinitely many finite components and finitely many infinite components which are strongly indiscernible. We know that $\sib(R)$ is infinite, but there are examples such  that  the number of siblings is countable and some for which it is the continuum. 
Note that if our problem has a positive answer, then $\sib(R)= 2^{\aleph_0}$ whenever all the components are singletons. 

\subsection{A possible extension to universal structures} 
A countable structure  $R$ is \emph{universal for its age} if every countable structure with the same age embeds into $R$.
\begin{problem}\label{prob:extension}
Get the same conclusion as in Problem \ref{prob:improvement} under the weaker requirement that $R$  is universal for its age  and the profile takes only  integer values.
\end{problem}

Some condition, e.g. that the profile of $R$ takes only integer values,  is necessary. Indeed, let $R_{\omega}$ be the relational structure made of a countable set and infinitely many distinct constants, such that the set not covered by the constants is infinite. With our definition of age, $R$ is  unique for its age, but not finitely partitionable. On the other hand, the universal theory $T_{\forall} (R)$ of $R$ has countably many countable models, namely $R_{0}, \dots R_{n}, \dots,  R_{\omega}$,  where $R_n$ is the restriction of $R_{\omega}$ to the constants plus $n$ extra elements. Each of those structure has only one sibling. 

If $R$ is universal for its universal theory $T_{\forall} (R)$, then it is equimorphic to a countable existentially universal structure, \cite{pouzet1972}. This structure,  unique up to isomorphism,  could play the role that uniform prehomogeneity plays in the case of $\aleph_0$ categoricity. 

If the conclusion of  Problem \ref{prob:extension} holds, a consequence is that if the profile of $R$ is finite, $R$ is universal and $Ker(R)$,   the  \emph{kernel} of $R$,  (the set  of $x\in E$ such that $\age (R_{-x})\not = \age(R)$) is infinite,  then  the number of siblings of $R$ is $2^{\aleph_0}$. Indeed, if $Ker(R)$ is infinite, $R$ cannot be finitely partitionable, nor cellular. So an obvious is to prove the following directly.

\begin{problem}\label{prob:extension2}
Let $R$ be a countable relational structure with finite profile and an infinite kernel. Prove that  if $R$ is universal for its age,  $\sib(R)=2^{\aleph_0}$. \end{problem}

As shown below, a  positive answer to Problems \ref{prob:extension} has  some consequences on the number of countable models of a universal theory;  one of which we know is true, the other conjectured. 

\subsection{Problems on the  number of countable models}\label{sub:countable}

Thomass\'e's conjecture is a specific question about the number of models of universal theories. As it is well known, there are complete theories with any $n, n\geq 3$, countable models, and  Ehrenfeucht's  families of examples provide such theories. Indeed set $R:=(\Q,~\leq,(c_n)_{n\in \N})$ where $c_n$ is the constant $n$; then the theory of $R$ contains, up to isomorphy  exactly three countable models: $R$, $R+\Q:= (\Q+\Q', \leq, (c_n)_{n\in \N})$, $R+\{a\}+\Q':= (\Q+\{a\}+ \Q, \leq, (c_n)_{n\in \N})$. The last two are equimorphic, but there  are  $2^{\aleph_0}$ equimorphic models. 

 It is possible that   Thomass\'e's conjecture holds for any countable relational structure and that the solution comes from  set theoretical or model theoretical techniques. Structures having $1$ or $\aleph_0$ siblings  must be exceptional and their description seems to be an interesting task. In that respect, we believe that the significant part of our result is the characterization of the structures $R$ such that $\sib(R)$ is $1$.  

Now let $\mathcal C$ be a hereditary class of finite relational structures in a finite language. Let $\overline {\mathcal C}_{\aleph_0} $ be the class of countable $R$ (up to isomorphy) such that $\age (R) \subseteq \mathcal C$, and let $\overline {\mathcal C}_{\aleph_0}/\equiv$ be the set of equimorphism classes of members of   $\overline {\mathcal C}_{\aleph_0}$. If further $\mathcal A$  is an age, let  $I({\mathcal A})$ be the number  of  countable $R$  (up to isomorphy) such that $\age (R)=\mathcal A$ and let $I({\mathcal A})/\equiv$ be the number  of equimorphism classes of countable $R$ such that $\age (R)= \mathcal A$. In this case the possible values of $I(\mathcal A)$ \label {pb-itemc} are known.   Indeed it was shown by Macpherson, Pouzet, and Woodrow in \cite{macpherson-pouzet-woodrow}   that $I(\mathcal A)$ is either $1$, $\aleph_0$ or $2^{\aleph_0}$. They further proved that $I(\mathcal A)$ is at most countable iff all $R$ with age $\mathcal A$ are cellular.  A positive answer to Problem \ref{prob:extension}  would be a generalization of the first part of their result; for consider two cases: \\
a) The number of maximal existential types which appear in those $R$ is uncountable. In this case the number of these types is $2^{\aleph_0}$, simply because these types form a $G_{\delta}$ set, and thus it follows that the number of isomorphic types of countable $R$ is $2^{\aleph_0}$. \\
b) This number is at most countable. In this case there is some countable $R'$ with  age $\mathcal A$ which is universal (see \cite{pouzet1972}), and thus we apply the conclusion of Problem \ref{prob:extension}.

The following problem remains.

\begin{problems}
\label {pb-itemd} Let $\mathcal A$ be an age, find the possible values of $I(\mathcal A)/\equiv$. 
\end{problems}

\noindent Concerning this problem, Pouzet, Sauer and Thomass\'e had  conjectured in 2006 that  $I(\mathcal A)/\equiv$   is either $1$, $\aleph_0$, $\aleph_1$ or $2^{\aleph_0}$; note that these values do occur as the age of an infinite path yields  $\aleph_{0}$, and the age of an infinite chain yields $\aleph_{1}$. In fact, as observed by Melleray in \cite{melleray} it follows from a theorem of Burgess (see \cite{srivastava}, Theorem 5.13.4 page 230) that the number is $2^{\aleph_0}$ whenever it is larger than $\aleph_1$.  Indeed,  the set of relational structures $R$ with age $\mathcal A$ and domain $\N$ is a $G_{\delta}$ set and the equivalence relation of  equimorphy is analytic.  One can say more. Let $\kappa:=  I(\mathcal A) / \equiv  < 2^{\aleph_0}$, then there is a countable universal structure, say $U$,  with age $\mathcal A$. Indeed, as mentioned above,  the number of maximal existential types which appear in the $R$ with age $\mathcal A$ is at most $\kappa$ (by  taking a representative in each equimorphy class and the maximal existential types appearing in that representative). Since these types form a $G_{\delta}$ set, their number in this case must be  countable. From the criteria given in  \cite{pouzet1972}, there is a a countable structure $U$ which is universal. The equivalence relation of equimorphy on subsets of $U$ is analytic  and we may now apply Burgess' result.  

These same three authors had also conjectured that $I(\mathcal A) / \equiv $ is $1$ iff all countable  $R$ with $\age(R)= \mathcal A$ are  cellular. This last conjecture is also somewhat related to Problem \ref{prob:extension}  since all countable $R$ are universal.  We do not know the answer to this simple question: Is $\sib(R)=2^{\aleph_0}$ and $\vert I(\mathcal A(R)) / \equiv $ equal to $1$ impossible?

Now consider two ages $\mathcal A \subseteq \mathcal A'$. It follows, from a result of Hodkinson and Macpherson and the result of  Macpherson, Pouzet, Woodrow \cite {macpherson-pouzet-woodrow} already mentioned, that $I(\mathcal A)\leq I(\mathcal A')$. Indeed  $I(\mathcal A)$ is either $1$, $\aleph_0$ or $2^{\aleph_0}$. Moreover,  $I(\mathcal A)$ is  $1$ iff some (in fact every) countable structure with age $\mathcal A$ is finitely partitionned (Hodkinson-Macpherson),  and $I(\mathcal A)$ is at most countable iff some (in fact every) countable structure with age $\mathcal A$ is cellular. 
Thus, either $I(\mathcal A')= 2^{\aleph_0}$, in which case $I(\mathcal A) \leq I(\mathcal A')$, or $I(\mathcal A')= \aleph_{0}$, in which case $\mathcal A'$ is the age of a countable cellular structure,  hence $\mathcal A$ too,  and thus $I(\mathcal A)\leq \aleph_0= I(\mathcal A')$, or else $I(\mathcal A')= 1$, in which case $\mathcal A'$ is the age of a finitely partitionnable structure, hence $\mathcal A$ too and thus $I(\mathcal A)=1= I(\mathcal A')$.

Thus once again the following problem naturally follows.

\begin{problems}
If  $\mathcal A \subseteq \mathcal A'$ are both ages, is $I(\mathcal A)/\equiv \; \leq \; I(\mathcal A')/\equiv$?
\end{problems}

The following particular case could shed some light. 

\begin{problems} Let $\mathcal A$ be an age, and suppose that  for every $n\in \N$ the collection of countable $R$ with $n$ unary relations added and such that $\age(R)=\mathcal A$   is w.q.o.  Does it follow that $I(\mathcal A)/\equiv$ is at most $\aleph_1$? \\
Conversely, if $I(\mathcal A)/\equiv$ is at most $\aleph_1$ ( and $\aleph_1<2^{\aleph_0}$), does it follows that the collection of countable $R$ such that $\age(R)= \mathcal A$ is w.q.o.? 
\end{problems}

\noindent We mentioned earlier that it is known (Laver 1971 \cite{laver}) that $I(\mathcal A)/\equiv$ is $\aleph_1$ if $\mathcal A$ is the age of an infinite chain. Is the same true  for the age of finite cographs? This is relevant here as countable relations with this age and $n$ unary relations added do form a w.q.o. (Thomass\'e \cite{thomasse1}).

A related result is the following. Let $R$ be  relational structure and $\varphi_R (\kappa)$ be the number of restrictions of $R$ to subsets  $A$ of size $\kappa$, these restrictions counted up to isomorphy. According to  Gibson, Pouzet,  Woodrow \cite{gibson} $\varphi_R (n)\leq  \varphi_R (\aleph_0)$ for $n<\omega$. And $\varphi_R (\aleph_0)$ can be finite, $\aleph_0$ or $2^{\aleph_0}$.

\begin{problem}
Let $R'$ be  a countable structure; it is well known that, for every countable $R$  with $\age(R)\subseteq  \mathcal A'= \age (R')$, there is a countable  extension of $R$  and $R'$ with age $\mathcal A'$. Consequently, the collection of countable $R'$ with a given age, say $\mathcal A'$,  is up-directed.  Is is true that  for every $R'$ in this set, the number of $R''$ above $R'$ is equal to the cardinality of this set?  
\end{problem}

If this is true, then the answer to Problem \ref {prob:extension}  is positive; indeed, if there is a universal $R$, the number of siblings will be  the number of structures with the same age.

We conclude with the following general problem. 

\begin{problems}\label{problem-heredit}
If $\mathcal C$ is a hereditary class, find the possible values of $\vert \overline {\mathcal C}_{\aleph_0}\vert$ and 
$\vert\overline {\mathcal C}_{\aleph_0}/\equiv \vert$.
\end{problems}

\end{document}